\numberwithin{equation}{section}
\newtheorem{theorem}{Theorem}[section]
\newtheorem{lemma}[theorem]{Lemma}
\newtheorem{corollary}[theorem]{Corollary}
\newtheorem{proposition}[theorem]{Proposition}
\theoremstyle{definition} \newtheorem{definition}[theorem]{Definition}
\newtheorem{example}[theorem]{Example}
\newtheorem{remark}[theorem]{Remark}
\newtheorem{notation}[theorem]{Notation}
\newcommand{\eps}{{\varepsilon}}
\newcommand{\M}{{\mathcal M}} 
\renewcommand{\L}{{\mathcal L}}
  \newcommand{\A}{{\mathcal A}} 
\newcommand{\D}{{\mathcal D}}
\newcommand{\dive}{{\text{div}}}
\newcommand{\inte}{{\text{int}}}
\newcommand{\Tor}{{\mathrm{S}}}
\newcommand{\conv}{{\text{conv}}}
\newcommand{\inv}{{\text{inv}}}
\newcommand{\Rev}{{\text{Rev}}}
 \newcommand{\RR}{{\mathbb R}}
\newcommand{\ZZ}{{\mathbb Z}} \newcommand{\NN}{{\mathbb N}}
 \newcommand{\PP}{{\mathbb C}{\mathbb P}}
\newcommand{\CC}{{\mathbb C}}
\newcommand{\oo}{\multiput(0,0)(10,0){2}{\circle*{2}}}
\newcommand{\ooo}{\multiput(0,0)(10,0){3}{\circle*{2}}}
\newcommand{\oooo}{\multiput(0,0)(10,0){4}{\circle*{2}}}
\newcommand{\Eeee}{\put(1,0){\line(1,0){8}}}
\newcommand{\eEee}{\put(11,0){\line(1,0){8}}}
\renewcommand{\1}{ {\bf 1} }
\newcommand{\m}{ {\bf -} }
\newcommand{\ee}{ {\bf e} }
\renewcommand{\aa}{ {\bf a} }
\newcommand{\dd}{ {\bf d} }
\newcommand{\ii}{ {\bf i} }
\renewcommand{\ll}{ {\bf l} }
\newcommand{\rr}{ {\bf r} }
\renewcommand{\ss}{ {\bf s} }
\newcommand{\kk}{ {\bf k} }
\newcommand{\yy}{ {\bf y} }
\newcommand{\cc}{ {\bf c} }
\newcommand{\GGamma}{ {\bf \Gamma} }
\title[Universal Polynomials for Severi Degrees of Toric Surfaces]{Universal Polynomials for \\ Severi Degrees of Toric Surfaces}
\author{Federico Ardila \and Florian Block}
\date{\today}
\address{Federico Ardila, Department of Mathematics, San Francisco
  State University, USA.}
\email{federico@sfsu.edu}
\address{Florian Block, Mathematics Institute, University of Warwick,
United Kingdom.}
\email{f.s.block@warwick.ac.uk}
\thanks {\emph {2010 Mathematics Subject Classification: Primary:
    14N10. Secondary: 51M20, 14N35. 
}}
\keywords {Enumerative geometry, toric surfaces, Gromov-Witten theory, Severi degrees, node polynomials}
\thanks{The first author was partially supported by the National Science Foundation CAREER Award DMS-0956178 and the National Science Foundation Grant DMS-0801075.
The second author was partially supported by the National Science Foundation Grant DMS-055588
and a Rackham Fellowship.}
\begin{document}

\begin{abstract}
The Severi variety parameterizes plane curves of degree~$d$ with
$\delta$~nodes. Its degree is called the Severi degree. For large
enough $d$, the Severi degrees coincide with the Gromov-Witten invariants
of $\PP^2$. Fomin
and Mikhalkin (2009) proved the 1995 conjecture that for
fixed~$\delta$, Severi degrees are eventually polynomial in $d$. 

In this paper, we study the Severi varieties corresponding to a large 
family of
toric surfaces. We prove the analogous result that the Severi degrees
are eventually polynomial as a function of the multidegree. More
surprisingly, we show that the Severi degrees are also eventually
polynomial ``as a function of the surface". We illustrate our theorems
by explicit computing, for a small number of nodes, the Severi degree
of any large enough Hirzebruch surface and of a singular surface.

Our strategy is to use tropical geometry to express Severi degrees in terms of Brugall\'e and Mikhalkin's floor diagrams, and study those combinatorial objects in detail. An important ingredient in the proof is the polynomiality of the discrete volume of a variable facet-unimodular polytope.
\end{abstract}

\maketitle

\section{Introduction and Main Theorems}
\label{sec:intro}

\subsection{Severi degrees and node polynomials for $\PP^2$.}
A \emph{$\delta$-nodal curve} is a reduced (not necessarily irreducible) curve having $\delta$ simple nodes and no other singularities. The \emph{Severi degree} $N^{d, \delta}$ is the degree of the Severi
variety parameterizing degree $d$ $\delta$-nodal curves in
the complex projective plane $\PP^2$. 
 In other
words, $N^{d, \delta}$ is the number of such curves through an appropriate
number of points in general position. For $d \geq \delta+2$, $N^{d, \delta}$ equals the Gromov-Witten invariant $N_{d, {d-1 \choose 2}-\delta}$. 


Severi varieties were introduced around 1915 by 
Enriques~\cite{En12} and Severi~\cite{Se21}, and have 
received considerable attention.
Much later,
in 1986, Harris~\cite{Ha86} achieved a celebrated breakthrough by
proving their irreducibility.

In 2009, Fomin and Mikhalkin \cite[Theorem 5.1]{FM} proved Di~Francesco and Itzykson's 1995 conjecture \cite{DI}
that, for a fixed number of
nodes $\delta$, the Severi degree $N^{d, \delta}$ becomes a polynomial
$N_{\delta}(d)$ in the degree, for $d \ge 2 \delta$. We will call $N_{\delta}(d)$ the
\emph{node polynomial} following Kleiman--Piene \cite{KP}. In
\cite{FB}, the second author improved the threshold of
Fomin and Mikhalkin from $2 \delta$ to $\delta$ and computed the node
polynomials for $\delta \le 14$ extending work of Kleiman and
Piene~\cite{KP} for $\delta \le 8$.

\subsection{Severi degrees and node polynomials for toric surfaces.}
The purpose of this paper is to generalize the previous results to the
context of counting curves on a large family of (possibly non-smooth)
toric surfaces $\Tor(\cc)$, 
which includes $\PP^1
\times \PP^1$ and Hirzebruch surfaces. A new and interesting feature of our results is that the Severi degree $N^{\dd, \delta}_{\Tor(\cc)}$ of such a toric surface $\Tor(\cc)$ is a polynomial not only as a function of the degree $\dd$, but also as a function of $\cc$, i.e., as a ``function of the surface" itself.
\medskip

\noindent \textbf{A note for combinatorialists.} A familiarity with the basic facts of toric geometry is desirable to understand the motivation for this work (and we refer the reader to Fulton's book \cite{Fu93} for the necessary definitions and background information). However, the machinery of tropical geometry allows for a purely combinatorial approach to studying Severi degrees, and most of this paper can be read independently of that background.

\medskip

We now state our results more precisely. 

\begin{notation}\label{not:polygon}
A polygon $P$ is said to be \emph{$h$-transverse} if it has integer coordinates and every edge has slope $0$, $\infty$, or $\frac1k$ for some integer $k$. Let $d^{\,t}$ and $d^{\,b}$ be the lengths of the top and bottom edges of $P$, if they exist (and 0 if they don't exist). Let the edges on the right side of the polygon, listed clockwise from top to bottom, have directions $(c^r_1, -1), \ldots, (c^r_n,-1)$ and lattice lengths $d^{\,r}_1, \ldots, d^{\,r}_n$,  so $c^r_1 > \cdots > c^r_n$.
Let the edges on the left side of the polygon, listed counterclockwise from top to bottom, have directions $(c^l_1, -1), \ldots, (c^l_m,-1)$ and lattice lengths $d^{\,l}_1, \ldots, d^{\,l}_m$,  so $c^l_1 < \cdots < c^l_m$. Notice that $d^{\,t}+\sum_i c^r_i d^{\,r}_i - d^{\,b} - \sum_j c^l_j d^{\,l}_j =0$.

Denote $\cc^r = (c^r_1, \ldots, c^r_n)$,  $\dd^r = (d^{\,r}_1, \ldots, d^{\,r}_n)$, $\cc^l = (c^l_1, \ldots, c^l_m)$,  $\dd^l = (d^{\,l}_1, \ldots, d^{\,l}_m)$, and $\cc=(\cc^r ; \cc^l), \dd= (d^{\,t} ; \dd^r ; \dd^{l})$. Finally, denote $\Delta(\cc, \dd) := P$. Observe that $\cc$ is the set of slopes of the non-vertical rays in the normal fan of $\Delta(\cc, \dd)$.
\end{notation}

%
Figure \ref{fig:bigpolygon} shows the polygon $\Delta(\cc, \dd)$ with $\cc=((3,1,0,-1);(-2,0,1,2))$ and \,\,  $\dd=(1;(1,2,1,1); (1,1,1,2))$ and its normal fan.

\begin{figure}[h]
\begin{center}
\qquad \includegraphics[height=4cm]{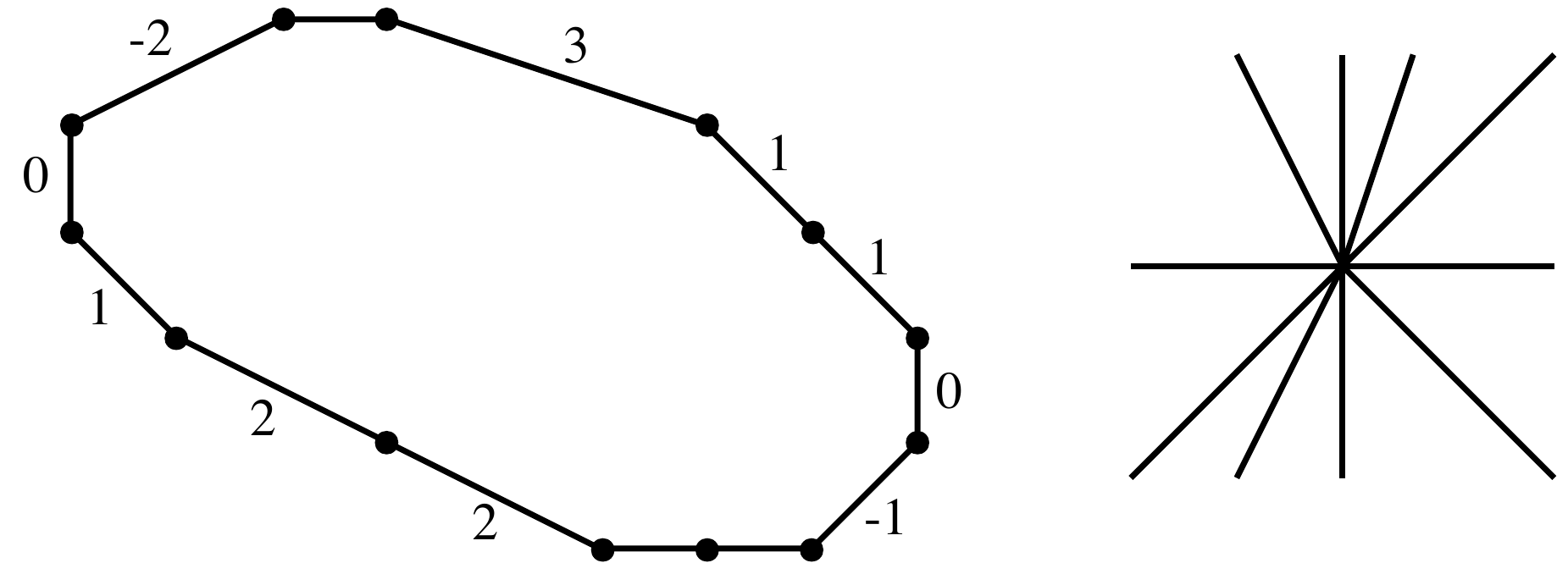}
\end{center}
\caption{An $h$-transverse polygon and its normal fan.}
\label{fig:bigpolygon}
\end{figure}

The normal fan of the polygon $\Delta(\cc, \dd)$ consists of the outward rays centered at the origin and perpendicular to the sides. This fan 
determines a projective toric surface~$\Tor(\cc)$ (which only depends on $\cc$ and whether $d^{\,t}$ and $d^{\,b}$ are zero).
Additionally, the polygon itself determines an ample line bundle
$\L_\cc(\dd)$ on $\Tor(\cc)$; let $|\L_\cc(\dd)|$ be the complete
linear system of divisors on $\Tor(\cc)$ 
corresponding to $\L_\cc(\dd)$. 


When we count curves on $\Tor(\cc)$, we will loosely think of $\cc$ as the surface where our curves live, and $\dd$ as their \emph{multidegree}. This is motivated by the case when $\Delta(\cc,\dd) = \textrm{conv}\{(0,0), (m,0), (0,m)\}$. In this case the toric surface is $\PP^2$, and the linear system $|\L_\cc(\dd)|$ parameterizes the degree $m$ curves on $\PP^2$. 

Given a positive integer $\delta$, the \emph{Severi variety} is
the closure of the set of $\delta$-nodal curves in 
$|\L_{\cc}(\dd)|$. 
Its degree is the \emph{Severi
  degree} $N_{\Tor(\cc)}^{\dd, \delta}$. 
This number also counts:
\begin{itemize}
\item
the $\delta$-nodal curves in $|\L_\cc(\dd)|$ which pass through a given set of $|\Delta \cap \ZZ^2| -1 - \delta$ generic
points in 
$\Tor(\cc)$, and 
\item
the $\delta$-nodal curves in the torus $(\CC^*)^2$ defined by polynomials with Newton
polygon $\Delta(\cc,\dd)$ which go through a given set of $|\Delta \cap \ZZ^2| -1-\delta$ generic
points in $(\CC^*)^2$.
\end{itemize}
%

Our main result is that, for a fixed number of nodes $\delta$, \textbf{the
Severi degree $N_{\Tor(\cc)}^{\dd, \delta}$ is a polynomial in both $\cc$ and
$\dd$}, provided $\cc$ and $\dd$ are sufficiently large and ``spread out", in the precise sense defined below.

\begin{theorem} (Polynomiality of Severi degrees 1: Fixed Toric Surface.)\label{mainthm:fixed}
\\
\noindent Fix $m,n \ge 1$, $\delta \ge 1$, and $\cc \in \ZZ^{m+n}$. There is a combinatorially defined polynomial
$p^\cc_\delta(\dd)$ such that the Severi degree $N_{\Tor(\cc)}^{\dd, \delta}$ is
  given by
\begin{equation}
\label{eqn:polynomiality}
N_{\Tor(\cc)}^{\dd, \delta} = p^\cc_\delta(\dd)
\end{equation}
for any sufficiently large and spread out $\dd \in
\ZZ_{\ge 0}^{m+n+1}$.

More precisely, the result holds if we assume, in Notation \ref{not:polygon}, that:
\begin{eqnarray*}
d^{\,t}, d^{\,b} & \geq & \delta, \\
d^{\,t}+c^r_1-c^l_1, \,\, d^{\,b}+c^r_n-c^l_m &\geq & 2\delta, \\
 d^{\,r}_i, \,\,  d^{\,l}_j &\geq& \delta+1 \quad (1 \leq i \leq n, \,\, 1 \leq j \leq m),
\\
\left|(d^{\,r}_1+\cdots+d^{\,r}_i) - (d^{\,l}_1+\cdots+d^{\,l}_j)\right| &\geq&  \delta+2 \quad (1 \leq i \leq n-1, \,\, 1 \leq j \leq m-1).
\end{eqnarray*}

\end{theorem}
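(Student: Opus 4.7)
The plan is to prove this via the standard pipeline tropical geometry $\to$ floor diagrams $\to$ lattice-point counting. The first step is to invoke a Mikhalkin-type correspondence theorem for $h$-transverse Newton polygons, followed by the refinement of Brugall\'e and Mikhalkin, to reinterpret $N^{\dd, \delta}_{\Tor(\cc)}$ as a weighted count of combinatorial objects. Concretely, one obtains a formula
\[
N^{\dd,\delta}_{\Tor(\cc)} \;=\; \sum_{\D} \mu(\D)\, \nu(\D,\dd),
\]
where the sum runs over marked \emph{floor diagrams} $\D$ built from the combinatorial data of $\Delta(\cc,\dd)$ with $\delta$ extra ``node'' features, $\mu(\D)$ is a purely combinatorial multiplicity depending on the shape of $\D$, and $\nu(\D,\dd)$ is the number of valid ways to place the markings encoding the point conditions on $\Tor(\cc)$.

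Next, I would reduce to finitely many combinatorial templates. Since $\delta$ is fixed, the non-generic features of a contributing floor diagram (floors of size $\geq 2$, divergent/convergent branchings, nodes) are bounded in number by a function of $\delta$ alone. Between these features, the floor diagram consists of long ``strings'' of weight-one steps whose individual lengths are the only parts that grow with $\dd$. After contracting these strings, one is left with a finite list of templates $\T$, indexed by $\cc$ and $\delta$ but independent of $\dd$, and one rewrites the Severi degree as a finite sum $\sum_{\T} \mu(\T)\,\widetilde{\nu}(\T,\dd)$, where $\widetilde{\nu}(\T,\dd)$ counts ways of assigning nonnegative integer lengths to the contracted strings and integer positions to the markings, subject to consistency constraints coming from the template and from the boundary data $d^{\,t}, d^{\,b}, \dd^{\,r}, \dd^{\,l}$.

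The heart of the proof is to show that $\widetilde{\nu}(\T,\dd)$ is a polynomial in $\dd$ in the regime prescribed by the theorem's inequalities. The constraints defining $\widetilde{\nu}(\T,\dd)$ cut out a rational polytope $P(\T,\dd) \subset \RR^{N}$ whose facet-defining inequalities have integer coefficients determined by $\T$ and right-hand sides that are $\ZZ$-linear in $\dd$, and whose facet normals, by the $h$-transverse nature of $\Delta(\cc,\dd)$, form a facet-unimodular configuration. Consequently $\widetilde{\nu}(\T,\dd) = |P(\T,\dd) \cap \ZZ^{N}|$, and the polynomiality theorem for the discrete volume of a facet-unimodular polytope (the technical result flagged in the abstract) applies, giving polynomiality in the parameters $\dd$ on each chamber of the normal fan of the family $\{P(\T,\dd)\}_{\dd}$. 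Summing polynomial contributions over the finitely many templates $\T$ then produces the desired node polynomial $p^{\cc}_{\delta}(\dd)$.

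The hardest step, and the one where all the work lies, is proving polynomiality for the facet-unimodular polytope $P(\T,\dd)$ and verifying that the hypotheses of the theorem place $\dd$ in a \emph{single} chamber where this polynomial formula is valid. The inequalities $d^{\,t}, d^{\,b} \geq \delta$, $d^{\,t}+c^r_1-c^l_1, d^{\,b}+c^r_n-c^l_m \geq 2\delta$, $d^{\,r}_i, d^{\,l}_j \geq \delta+1$, and $|(d^{\,r}_1+\cdots+d^{\,r}_i)-(d^{\,l}_1+\cdots+d^{\,l}_j)| \geq \delta+2$ should be exactly the conditions ensuring that no facet of $P(\T,\dd)$ becomes redundant, collides with another facet, or disappears as $\dd$ varies, i.e.\ that the combinatorial type of $P(\T,\dd)$ is constant across the allowed region. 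Once the polytope's combinatorial type is locked, the facet-unimodular polynomiality theorem does the rest.
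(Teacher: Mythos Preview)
Your overall architecture matches the paper's: floor diagrams via Brugall\'e--Mikhalkin, a template decomposition bounding the non-trivial structure by $\delta$, then polynomiality via discrete integration over a facet-unimodular polytope, with the stated inequalities forcing a single chamber. That is exactly the pipeline used here.

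There is, however, one genuine ingredient you are missing. A $\Delta(\cc,\dd)$-floor diagram carries not only its edge structure but also an assignment $(\ll,\rr)$ of left and right directions to its $M=\sum d^{\,r}_i=\sum d^{\,l}_j$ vertices; these are permutations of the multisets $D_l$ and $D_r$, and their cogenus $\delta(\ll,\rr)$ contributes to $\delta$. Even after you contract the weight-one strings to obtain finitely many templates $\GGamma$, you must still sum over all admissible $(\ll,\rr)$, and these are sequences whose \emph{length} grows with $\dd$. So your claim that the template list is ``indexed by $\cc$ and $\delta$ but independent of $\dd$'' is not yet justified. The paper resolves this by encoding each admissible $(\ll,\rr)$ into a bounded-length tuple of \emph{swaps} (Section~\ref{sec:divtoswaps}), recording only the local interleavings of consecutive direction values; the condition $d^{\,r}_i,d^{\,l}_j\ge\delta+1$ is what ensures this encoding is faithful. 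This swap step is what actually produces a finite outer sum with index set independent of $\dd$, and it is also what drives the piecewise structure below.

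A second, smaller point: the paper does not reduce $\widetilde\nu(\T,\dd)$ to a pure lattice-point count $|P(\T,\dd)\cap\ZZ^N|$. Rather, for each template placement $\kk$ in a polytope $A(\GGamma,\dd)$ (which is indeed facet-unimodular, with rows of type $\ee_i$ and $\ee_i-\ee_j$), the number of markings $P_{\GGamma}(\aa,\kk)$ is a \emph{piecewise polynomial} in $(\cc,\dd,\kk)$, with pieces determined by where each $k_s$ sits relative to the partial sums $d^{\,r}_1+\cdots+d^{\,r}_i$ and $d^{\,l}_1+\cdots+d^{\,l}_j$. One therefore needs the discrete integral of a polynomial over a variable facet-unimodular polytope (Lemmas~\ref{lemma:discreteintegral1}--\ref{lemma:discreteintegral2}), not just the Ehrhart count. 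The spread-out condition $|(d^{\,r}_1+\cdots+d^{\,r}_i)-(d^{\,l}_1+\cdots+d^{\,l}_j)|\ge\delta+2$ is used precisely in Step~2 of the proof to separate these anchor points and fix the combinatorial type of each $A(\GGamma,\dd)\cap F$. Your single-polytope formulation may be salvageable, but you would have to explain how the equivalence classes of markings and the swap data embed as lattice points, and that is not automatic.
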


\begin{theorem} (Polynomiality of Severi degrees 2: Universality.)\label{mainthm:universal}\\
\noindent Fix $m,n \ge 1$ and $\delta \ge 1$. There is a universal and combinatorially defined polynomial $p_\delta(\cc,\dd)$ such that the Severi degree $N_{\Tor(\cc)}^{\dd, \delta}$ is
  given by
\begin{equation}
\label{eqn:polynomiality}
N_{\Tor(\cc)}^{\dd, \delta} = p_\delta(\cc,\dd)
\end{equation}
for any sufficiently large and spread out $\cc \in \ZZ^{m+n}$ and $\dd  \in
\ZZ_{\ge 0}^{m+n+1}$.

More precisely, the result holds if we assume, in Notation \ref{not:polygon}, that:
\begin{eqnarray*}
d^{\,t}, d^{\,b} & \geq & \delta, \\
d^{\,t}+c^r_1-c^l_1, \,\, d^{\,b}+c^r_n-c^l_m &\geq & 2\delta, \\
 d^{\,r}_i, \,\,  d^{\,l}_j &\geq& \delta+1 \quad (1 \leq i \leq n, \,\, 1 \leq j \leq m), \\
\left|(d^{\,r}_1+\cdots+d^{\,r}_i) - (d^{\,l}_1+\cdots+d^{\,l}_j)\right| &\geq&  \delta+2 \quad (1 \leq i \leq n-1, \,\, 1 \leq j \leq m-1), \\
c^r_i - c^r_{i+1}, \,\, c^l_{j+1} - c^l_j   &\geq& \delta+1  \quad (1 \leq i \leq n-1, \,\, 1 \leq j \leq m-1). 
\end{eqnarray*}
\label{thm:maintheoremuniversal}
\end{theorem}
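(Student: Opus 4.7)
The plan is to adapt the tropical/combinatorial strategy of Theorem~\ref{mainthm:fixed} so that $\cc$ is treated as a variable alongside $\dd$. The extra spread-out hypotheses $c^r_i - c^r_{i+1}, c^l_{j+1} - c^l_j \geq \delta+1$ are precisely what is needed to make this extension work, and should be used only at the stability step described below.

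First, I would invoke Mikhalkin's correspondence theorem together with the floor diagram formalism of Brugall\'e--Mikhalkin to rewrite $N_{\Tor(\cc)}^{\dd, \delta}$ as a weighted sum over marked floor diagrams whose ``floor directions'' are drawn from $\cc$ and whose horizontal multiplicities encode $\dd$. The key structural observation, which already underlies Theorem~\ref{mainthm:fixed}, is that a $\delta$-nodal floor diagram has a bounded ``essential part'' of size $O(\delta)$: the long runs of trivial floors that dominate when $\dd$ is large are captured by a finite list of \emph{diagram types} $\T$ that does not depend on $\cc$ or $\dd$. Thus the Severi degree is a finite sum $\sum_\T W_\T(\cc,\dd)$ of contributions indexed by diagram type.

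Second, for each type $\T$ the contribution $W_\T(\cc,\dd)$ is a weighted lattice-point count over a polytope $\P_\T(\cc,\dd)\subset \RR^N$ whose facets are cut out by inequalities \emph{linear in} $(\cc,\dd)$, and whose weight function is a polynomial in the coordinates and in $(\cc,\dd)$. The spread-out conditions on $\dd$ ensure, as in Theorem~\ref{mainthm:fixed}, that the combinatorial type of $\P_\T(\cc,\dd)$ (which facets are active versus redundant) is constant throughout the admissible region; the new spread-out conditions on $\cc$ play the analogous role with respect to slopes, preventing distinct floor directions from colliding or reordering so that the a priori piecewise-polynomial behavior in $\cc$ collapses to a single polynomial formula. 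Third, one verifies that each $\P_\T(\cc,\dd)$ is facet-unimodular in the sense used later in the paper, and invokes the key technical result (forecast in the abstract) that the weighted discrete volume of a variable facet-unimodular polytope is polynomial in the displacements of its facets. Because those displacements are linear in $(\cc,\dd)$, each $W_\T$ is a polynomial in $(\cc,\dd)$, and summing the finitely many $W_\T$ yields the desired universal polynomial $p_\delta(\cc,\dd)$.

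The main obstacle, I anticipate, is genuinely verifying stability in $\cc$, which did not arise in Theorem~\ref{mainthm:fixed}: one must show that the set of admissible floor orderings, the edge multiplicities, and the identification of markings with lattice points all admit a single polynomial description valid throughout the spread-out region of $\cc$-space. The condition $c^r_i - c^r_{i+1}, c^l_{j+1} - c^l_j \geq \delta+1$ should be exactly the combinatorial slack that forbids any collision between floors of different slopes; conversely one expects that, were two slopes on the same side of the polygon to differ by less than $\delta+1$, an explicit $\delta$-nodal floor diagram would witness a genuinely piecewise (non-polynomial) dependence on $\cc$, showing the threshold is sharp up to constants.
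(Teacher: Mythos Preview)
Your outline is broadly on the right track and shares the paper's overall architecture (floor diagrams $\to$ template decomposition $\to$ discrete integrals over facet-unimodular polytopes), but you misidentify the precise role of the hypothesis $c^r_i - c^r_{i+1},\, c^l_{j+1} - c^l_j \ge \delta+1$, and as a result your sketch is more laborious than necessary.

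In the paper the universal statement is essentially a one-line corollary of the fixed-surface result. Recall that in the proof of Theorem~\ref{mainthm:fixed} the Severi degree is written (cf.\ (\ref{eq:Severi2}) and its refinement (\ref{eq:Severi3})) as a finite sum indexed by pairs $(\pi,\GGamma)$ where $\pi=(\pi^r,\pi^l)$ records, via ``swaps'', the reorderings of the direction multisets $D_r, D_l$, and $\GGamma$ is a tuple of templates with $\delta(\GGamma)=\delta-\delta(\ll,\rr)$. For \emph{fixed} $\cc$ this index set is finite but depends on $\cc$. The sole purpose of the spread-out hypothesis on $\cc$ is that any nontrivial reordering already has cogenus $\delta(\ll,\rr)\ge c^r_i-c^r_{i+1}$ or $\ge c^l_{j+1}-c^l_j$, hence $>\delta$; thus only the trivial swap survives, and the outer index set collapses to the set of template collections $\GGamma$ with $\delta(\GGamma)=\delta$, which is manifestly independent of $\cc$. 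Everything inside the sum was already shown, in the course of proving Theorem~\ref{mainthm:fixed}, to be polynomial in $(\cc,\dd)$ for large $\dd$, so no further ``stability in $\cc$'' analysis is needed.

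By contrast, you frame the $\cc$-condition as ensuring that the combinatorial type of the polytopes $\P_\T(\cc,\dd)$ is constant in $\cc$. That is not where the $\cc$-dependence lives: once the swap is fixed, the polytope $A(\GGamma,\aa)$ of admissible template positions has facet parameters depending only on $\dd$ (in fact only on $M=\sum d_i$), and the domains of polynomiality of the integrand $P_\GGamma(\aa,\kk)$ are cut out by hyperplanes in $(\dd,\kk)$-space that are independent of $\cc$. So there is no piecewise behavior in $\cc$ to ``collapse''; the only $\cc$-dependence of the \emph{index set} is through which swaps are admissible, and the spread-out condition kills all nontrivial ones. Your proposed mechanism would not be wrong so much as vacuous at that step, and the genuine reduction (swap triviality) is missing from your sketch.
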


Naturally, we have $p^\cc_\delta(\dd) = p_\delta(\cc,\dd)$ as polynomials in $\dd$ for all sufficiently spread out $\cc$ (in the sense of the last condition of Theorem \ref{mainthm:universal}).

As special cases, we obtain polynomiality results for curve counts on
$\PP^1 \times \PP^1$, and Hirzebruch surfaces. 
In Remark \ref{rmk:implicitHirzebruch} we compute the node polynomials $p_\delta$ for $\delta \leq 5$ for
Hirzebruch
surfaces.
Similar results hold
for  toric surfaces arising from polygons with
one or no horizontal edges, such as $\PP^2$; see Remark~\ref{rmk:fewerhorizontaledges}.

The restriction in Theorems~\ref{mainthm:fixed}
and~\ref{mainthm:universal} to toric surfaces of $h$-transverse
polygons is a technical assumption necessary for our proof. Floor
diagrams, our main combinatorial tools, are (as of now) only defined
in this situation. We expect, however, that similar results
hold for arbitrary toric surfaces. 

%





\subsection{The relationship with G\"ottsche's Conjecture.}
\label{sec:relationwithGoettscheConjecture}

Our work is closely related to G\"ottsche's Conjecture \cite[Conjecture.~2.1]{Go}, 
but the precise relationship still requires clarification. 
G\"ottsche conjectured the existence of
universal polynomials $T_\delta(w, x, y, z)$ that compute the Severi degree for any smooth projective
algebraic surface $S$ and any sufficiently ample line bundle $\L$ on $S$. According to the
conjecture, the number of
$\delta$-nodal curves in the linear system $|\L|$ through an
appropriate number of points is given by evaluating $T_\delta$ at
the four topological numbers 
$\L^2, \L K_S, K_S^2$ and $C_2(S)$. 
Here $K_S$ denotes the
canonical bundle, $C_1$ and $C_2$ represent Chern classes, and $LM$ denotes the degree of $C_1(L) \cdot C_1(M)$ for line bundles $L$ and $M$. 
 Recently,
Tzeng~\cite{Tz10} proved G\"ottsche's Conjecture.

If the toric surface $\Tor(\cc)$ is smooth, then all four topological numbers
mentioned above are polynomials in $\cc$ and $\dd$.
In that case, Tzeng's proof of G\"ottsche's conjecture implies that, for fixed $\delta$, the Severi
degrees $N_{\Tor(\cc)}^{\dd, \delta}$ are given by a universal polynomial in $\cc$ and $\dd$,
provided that $\L_\cc(\dd)$ is $(5 \delta -1)$-ample.

G\"ottsche's conjecture does not imply our results because the toric surfaces considered in Theorems~\ref{mainthm:fixed} and~\ref{mainthm:universal} are almost never smooth. 
The surface $\Tor(\cc)$ is smooth precisely when any two adjacent rays in the normal fan span the lattice $\ZZ^2$. This happens if and only if
$c_1^r-c_2^r = \cdots = c_{n-1}^r - c_n^r = 1$ and $c_1^l-c_2^l =
\cdots = c_{m-1}^l - c_m^l = -1$.

It is natural to ask for a generalization of the four topological
numbers to some class of singular surfaces, so that G\"ottsche's universal polynomial
 $T_\delta(w, x, y, z)$ specializes to the polynomial of
 Theorem~\ref{mainthm:universal}. We do not know how to do
 this, even for  $\Tor(\cc)$ with $\cc = ((c_1^r, c_2^r); (0,0))$. In general,
  $C_1(K_S)$ is defined for any projective
  variety~\cite[Section~3]{Fu84}.
To define
$C_2(S)$ for singular $S$, one could pass to
MacPherson's Chern class~\cite{Ma74}, which is defined for any
algebraic variety. Alternatively, for
toric surfaces, $C_2(S)$ could also be defined via the combinatorial
formula for the Chern polynomial of a toric variety.
However, we checked that
  $T_\delta(w, x, y, z)$, evaluated at
  any of the proposed sets of numbers, gives a different
  polynomial and that further (topological) correction terms appear to
  be necessary
  (c.f.\ Section~\ref{sec:non-smooth-example}). Alternatively, evaluating $T_\delta(w, x, y, z)$ at the topological
  numbers of a smooth resolution of $\Tor(\cc)$ does not yield the
  desired result either.

Still, the Severi degrees are uniformly given by a polynomial in $\cc$
and $\dd$, provided $\dd$ is sufficiently large.
This suggests that it may be possible to generalize G\"ottsche's
conjecture to a class of singular algebraic surfaces. We present some
numerical data in Section~\ref{sec:non-smooth-example}.


\subsection{Outline}
This paper is organized as follows. In Section \ref{sec:floordiagrams}
we describe Brugall\'{e} and Mikhalkin's tropical method for counting
irreducible curves on ``$h$-transverse" toric surfaces in terms of
floor diagrams, and generalize it to compute Severi degrees of such surfaces. We then focus on the combinatorial enumeration of floor diagrams. In Section \ref{sec:templates} we generalize Fomin and Mikhalkin's ``template decomposition" of a floor diagram, and express Severi degrees in terms of templates. The resulting formula is intricate and not obviously polynomial. In Sections \ref{sec:polynomialityspecial} and \ref{sec:polynomialitygeneral} we analyze this formula in detail. After several simplifying steps, we express Severi degrees as a finite sum, where each summand is a ``discrete integral" of a polynomial function over a variable polytopal domain. This allows us to prove their eventual polynomiality. In Section \ref{sec:polynomialityspecial} we carry this out for ``first-quadrant" $h$-transverse toric surfaces, and in Section \ref{sec:polynomialitygeneral} we do it for  general $h$-transverse toric surfaces. The formulas we obtain for Severi degrees are somewhat complicated, but they are completely combinatorial and effectively computable. We illustrate this in Section~\ref{sec:computations}
by computing the Severi degree of
any large enough Hirzebruch surface for $\delta \leq 5$ nodes as well as
for a family of singular toric surfaces and $\delta \leq 2$.

\bigskip

{\bf Acknowledgements.} We thank the referee for valuable comments
that helped improve this article. We also
thank Beno\^it Bertrand and Erwan Brugall\'e for clarifying some details
about floor
diagrams.  The second author thanks Eugene Eisenstein,
Jos\'e Gonz\'alez, Ragni Piene, and Alan Stapledon for explaining some
facts about toric surfaces and Chern classes, and Bill Fulton for valuable comments on
the subject. Part of this work was accomplished at the MSRI (Mathematical
  Sciences Research Institute) in Berkeley, CA, USA, during the
Fall 2009 semester in tropical geometry. We would like to
  thank MSRI for their hospitality.

\section{Counting curves with floor diagrams}
\label{sec:floordiagrams}

In this section we review the floor diagrams of Brugall\'e and
Mikhalkin \cite{BM1, BM2} associated to curves on toric surfaces which come from
$h$-transverse polygons. We will introduce them using notation inspired by Fomin and Mikhalkin \cite{FM} who discussed floor diagrams in the
planar case. 

\subsection{An outline of the tropical method for counting curves.}\label{sec:tropicalmethod}
We briefly sketch Brugall\'e and Mikhalkin's technique for counting
curves through a generic set of points on a toric
surface. \cite{BM1,BM2} (They carried this out for irreducible curves,
and we extend it to possibly reducible curves and Severi degrees.)  We wish to count the
$\delta$-nodal curves in $(\CC^*)^2$, having Newton polygon $\Delta$,
which go through sufficiently many generic points. Brugall\'e and
Mikhalkin proved that this problem can be ``tropicalized": we can
``just" count the $\delta$-nodal \emph{tropical curves} with Newton
polygon $\Delta$ going through a generic set of points. Roughly
speaking, such a tropical curve is an edge weighted polyhedral complex in the plane which is dual to a polyhedral subdivision of $\Delta$, as shown in Figure \ref{fig:tropicalcurve}. 

\begin{figure}[h]
\begin{center}
\qquad \includegraphics[height=7.1cm]{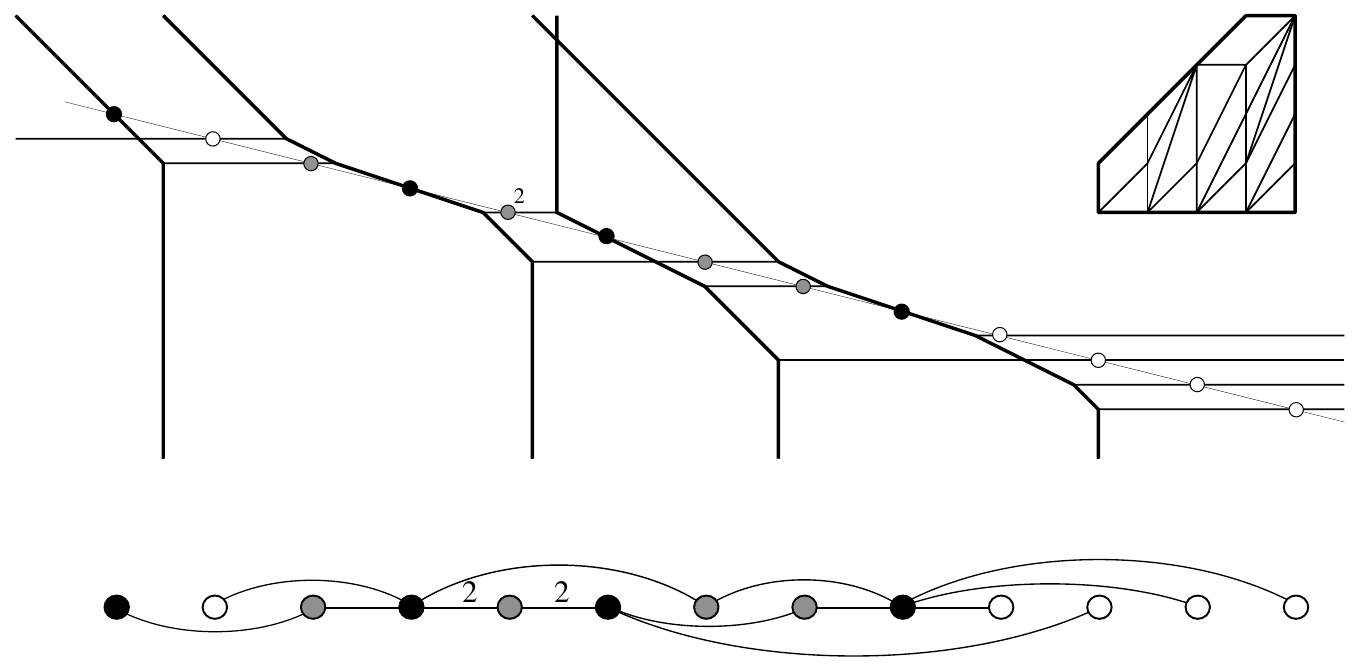}
\end{center}
\caption{A tropical curve through a vertically stretched collection of 13 points (rotated $90^\circ$ counterclockwise), the dual subdivision of the Newton polygon (also rotated $90^\circ$ counterclockwise), and the corresponding marked floor diagram.}
\label{fig:tropicalcurve}
\end{figure}

The resulting tropical enumeration problem is still very subtle. When
the polygon $\Delta$ is $h$-transverse, it can be simplified. One can
assume that the generic points $P$ lie very far from each other on an
almost vertical line, i.e., are ``vertically stretched''. 
In this case, one can control where the points of $P$ must land on the tropical line $L$. Divide the curve into \emph{elevators}, which are all the vertical segments of $L$ (they are horizontal in Figure \ref{fig:tropicalcurve}) and \emph{floors}, which are the connected components of $L$ upon removal of the elevators (they are bold in Figure \ref{fig:tropicalcurve}). The $h$-transversality condition then guarantees that one must have exactly one point of $P$ on each elevator and exactly one on each floor.

That geometric incidence information is then recorded in a \emph{floor diagram}. This diagram has a node for each floor of $L$, and an edge for each elevator connecting two floors. More detailed information is contained in the \emph{marked floor diagram}. This diagram has one black node for each floor of $L$, and one gray/white node for each bounded/unbounded elevator. Its edges show how the elevators connect the different floors of $L$. 

This correspondence encodes all the necessary geometric information into combinatorial data, and reduces the computation to a (still very subtle) purely enumerative problem on marked floor diagrams. We now define marked floor diagrams precisely, and explain how exactly we need to count them.

\subsection{Counting irreducible curves via connected floor diagrams.}\label{sec:gromovwitten}
  Given a lattice polygon $\Delta$ and a positive integer $\delta$, let 
 $N_{\Delta,\delta}$ 
 be the number of
$\delta$-nodal \emph{irreducible} curves in the 
torus $(\CC^*)^2$, given by polynomials with Newton
polygon $\Delta$, which go through $|\Delta \cap \ZZ^2| -1-\delta$ generic
points in $(\CC^*)^2$. 
When the toric surface is Fano, these are the
\emph{Gromov-Witten invariants} of the surface $\Tor(\Delta)$. 
(These numbers should not be confused with the closely related $N^{\Delta,\delta}$, which counts curves that are not necessarily irreducible.)

%
We now explain how these numbers can be computed tropically, following Brugalle and Mikhalkin's work. \cite{BM1, BM2}

For the rest of the paper we will assume that $\Delta=\Delta(\cc,\dd)$ is $h$-transverse and we will use Notation \ref{not:polygon} to describe it. Define the multiset $D_r$ of \emph{right directions} of $\Delta$ to be the multiset containing each right direction $c^r_i$ repeated $d^{\,r}_i$ times. Define $D_l$ analogously. The cardinality of $D_r$ (or, equivalently, of $D_l$) is the \emph{height} of $\Delta$.

%
%

\begin{example}
For the polygon $\Delta$ of Figure \ref{fig:bigpolygon}, the multisets of left directions and right directions are $D_l=\{-2,0,1,2,2\}$ and $D_r=\{3,1,1,0,-1\}$ and the upper edge length is $d^{\,t}=1$. 
\end{example}

Fix an $h$-transverse polygon $\Delta$. Now we define the combinatorial
objects which, when weighted correctly, compute $N_{\Delta, \delta}$.

\begin{definition}
A \emph{$\Delta$-floor diagram} $\D$ 
consists of:
\begin{itemize}
\item
two permutations\footnote{The permutations of a multiset are counted without repetition. For instance, the multiset $\{1,1,2\}$ has three permutations: $(1,1,2), (1,2,1), (2,1,1)$.}
 $(l_1, \dots l_M)$ and $(r_1, \dots r_M)$ of the multisets $D_l$ and $D_r$ of left and right directions of $\Delta$, and a sequence $(s_1, \dots s_M)$ of non-negative integers such that  $s_1+ \cdots+ s_M = d^{\,t}$, 
\item
a graph on a vertex set $\{1, \dots, M \}$, possibly with multiple edges, with edges directed $i \to j$ for $i<j$, and
\item
edge weights $w(e) \in \ZZ_{>0}$ for all edges $e$ such that
for every vertex $j$,
\begin{displaymath}
\dive(j) := \sum_{ \tiny
     \begin{array}{c}
  \text{edges }e\\
j \stackrel{e}{\to} k
     \end{array}
} w(e) -   \sum_{ \tiny
     \begin{array}{c}
  \text{edges }e\\
i \stackrel{e}{\to} j
     \end{array}
} w(e) \leq r_j - l_j + s_j. \,  \footnote{This inequality will become clear when we define the markings of a floor diagram.}
\end{displaymath}

\end{itemize}

Sometimes we will omit $\Delta$ and call $\D$ a \emph{toric floor diagram} or simply a \emph{floor diagram}. When a floor diagram has $\ll=(l_1, \ldots, l_M), \rr=(r_1, \ldots, r_M), \ss=(s_1, \ldots, s_M)$, we will call it an $(\ll,\rr,\ss)$-floor diagram. We will also call $\aa:=(d^{\,t},\rr-\ll)$ the \emph{divergence sequence}, because in Definition \ref{def:marking} we will add some edges to obtain a diagram $\tilde{\D}$ with this vertex divergence sequence, and it is this new diagram that we will mostly be working with.
\end{definition}

\begin{example}
Figure \ref{fig:floordiagram} shows the toric floor diagram corresponding to the tropical curve of Figure \ref{fig:tropicalcurve}, with $D_l=\{0,0,0,0\}, D_r=\{1,1,1,0\},$ and $I_0 = 1$. We place the vertices on a line in increasing order and omit the (left-to-right) edge directions.
\end{example}

\begin{figure}[h]
\begin{flushleft}
\qquad\qquad\qquad \includegraphics[height=0.8cm]{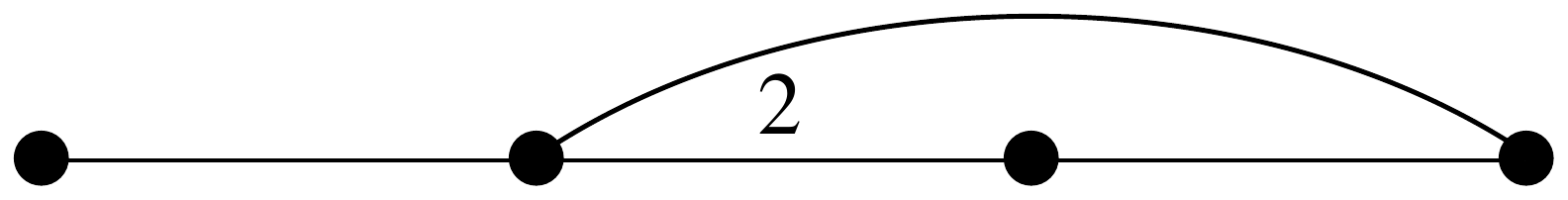}
\caption{A toric floor diagram.}
\label{fig:floordiagram}
\end{flushleft}
\vspace{-2.5cm}
\begin{small}
\begin{eqnarray*}
\qquad\qquad\qquad\qquad\qquad \qquad\qquad\qquad\qquad \qquad\qquad\mathbf{r} &=& (1,1,0,1)\\
\mathbf{l} &=& (0,0,0,0)\\
\mathbf{s} &=& (0,1,0,0)
\end{eqnarray*}
\end{small}
\end{figure}

A floor diagram $\D$ is \emph{connected} if its underlying graph
is. Notice that in \cite{BM2} floor diagrams are necessarily
connected; we don't require that. The \emph{genus} of $\D$ is the genus $g(\D)$ of the underlying graph (or
the first Betti number of the underlying topological space). If $\D$
is connected its \emph{cogenus} is given by
\begin{displaymath}
\delta(\D) = |\inte(\Delta) \cap \ZZ^2| - g(\D),
\end{displaymath}
where $\inte(\Delta)$ denotes the interior of the polygon $\Delta$.
This definition is motivated by the fact that an irreducible algebraic curve
of genus $g$ with $\delta$ nodes and Newton polygon $\Delta$ satisfies
$\delta + g = |\inte(\Delta) \cap \ZZ^2 |$. 
Via the correspondence between algebraic curves and floor diagrams
(see \cite{BM2}) these notions literally correspond to the respective
analogues for algebraic curves. Connectedness corresponds to
irreducibility. 

Lastly, a floor diagram $D$ has \emph{multiplicity}
\begin{displaymath}
\mu(\D) = \prod_{\text{edges }e} w(e)^2.
\end{displaymath}



To enumerate algebraic curves via floor diagrams we need to count certain
markings of these diagrams, which we now define.

\begin{definition}\label{def:marking}
A \emph{marking} of a floor diagram $\D$ is defined by the following
four step process. 

{\bf Step 1:} For each vertex $j$ of $\D$, create $s_j$ new indistinguishable vertices and
connect them to $j$ with new edges directed towards $j$. 

{\bf Step 2:} For each vertex $j$ of $\D$, create $r_j - l_j + s_j- \textrm{div}(j)$ new indistinguishable vertices and
connect them to $j$ with new edges directed away from $j$. This makes the divergence of vertex $j$ equal to $r_j - l_j$ for $1 \leq j \leq M$.

{\bf Step 3:} Subdivide each edge of the original floor
diagram $\D$ into two
directed edges by introducing a new
vertex for each edge. The new edges inherit their weights and orientations. Denote the
resulting graph $\tilde{\D}$.

\begin{figure}[h]
\begin{flushleft}
\qquad \includegraphics[height=1.5cm]{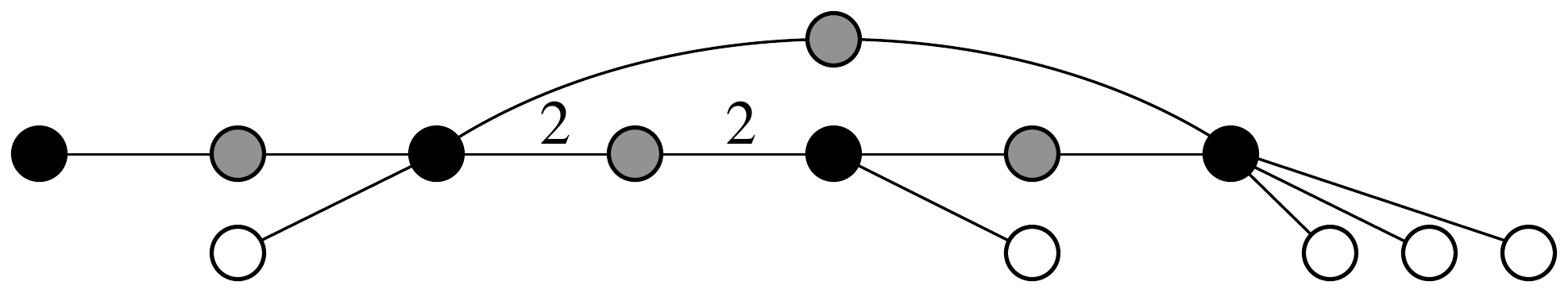}
\caption{The result of applying Steps 1-3 to Figure \ref{fig:floordiagram}.}
\label{fig:steps1-3}
\end{flushleft}
\vspace{-3cm}
\begin{small}
\begin{eqnarray*}
\qquad \qquad\qquad\qquad\qquad \qquad\qquad\qquad\qquad\qquad\qquad\qquad \qquad\qquad\mathbf{r} &=& (1,1,0,1)\\
\mathbf{l} &=& (0,0,0,0)\\
\mathbf{s} &=& (0,1,0,0)\\
\mathbf{r - l} &=& (1,1,0,1)
\end{eqnarray*}
\end{small}
\end{figure}

{\bf Step 4:} Linearly order the vertices of $\tilde{\D}$
extending the order of the vertices of the original floor
diagram $\D$ such that, as before, each edge is directed from a
smaller vertex to a larger vertex.

The extended graph $\tilde{\D}$ together with the linear order on
its vertices is called a \emph{marked floor diagram}, or a
\emph{marking} of the original floor diagram $\D$.
\begin{figure}[h]
\begin{center}
\qquad \includegraphics[height=1.2cm]{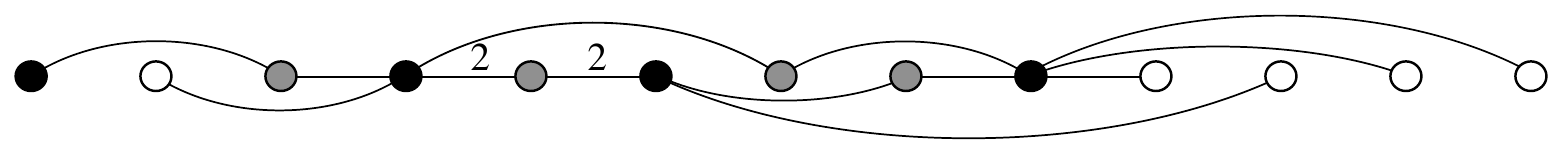}
\end{center}
\bigskip
$\mathbf{r} = (1,1,0,1), 
\mathbf{l} = (0,0,0,0), 
\mathbf{s} = (0,1,0,0), \quad 
\mathbf{r - l} = (1,1,0,1)$
\caption{A marking of the floor diagram of Figure \ref{fig:floordiagram}.}
\label{fig:markedfloordiagram}
\end{figure}

\end{definition}

Tropically, Step 1 corresponds to adding the upward elevators to get the right Newton polygon, Step 2 corresponds to adding the downward elevators to balance each floor, Step 3 marks the bounded elevators, and Step 4 decides the order in which the given points land on the floors and elevators of our curve.

Keeping in mind that we introduced indistinguishable vertices in Steps 1 and 2, we need to count marked floor diagrams up to equivalence. Two such $\tilde{\D}_1$, $\tilde{\D}_2$ are \emph{equivalent} if
$\tilde{\D}_1$ can be obtained from $\tilde{\D}_2$ by permuting edges
without changing their weights; \emph{i.e.,} if there exists an
automorphism of weighted graphs which preserves the vertices of $\D$ and maps $\tilde{\D}_1$ to
$\tilde{\D}_2$.  The \emph{number of markings} $\nu(\D)$ is the number of
marked floor diagrams $\tilde{\D}$ up to equivalence.

\begin{example}
Let us compute $\nu(\D)$ for the floor diagram of Figure \ref{fig:floordiagram}, by counting the possible linear orderings of Figure \ref{fig:steps1-3}. Modulo isomorphism, the ordering of ten of the vertices is fixed. The leftmost lower white vertex can be inserted in three places. The top gray vertex can be placed in 4 positions. For two of them, the second white vertex can be placed in 6 positions, while for the other two it can be placed in 7 positions. Therefore $\nu(\D) = 3(2 \cdot 6 + 2 \cdot 7) = 78$.
\end{example}

We can now combinatorialize the problem of counting irreducible curves
with given genus and $h$-transverse Newton polygon.

\begin{theorem}\label{th:BM}[Theorem 3.6 of \cite{BM2}]
For any $h$-transverse polygon $\Delta$ and any $\delta \ge 0$, 
the number of irreducible curves  in the 
torus $(\CC^*)^2$, having $\delta$ nodes and Newton polygon~$\Delta$, and going through $|\Delta \cap \ZZ^2| -1-\delta$ given generic
points in $(\CC^*)^2$, equals
\begin{displaymath}
N_{\Delta, \delta} = \sum_\D \mu(\D) \nu(\D),
\end{displaymath}
where the sum runs over all connected $\Delta$-floor diagrams 
of cogenus $\delta$.

\end{theorem}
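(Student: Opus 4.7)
The plan is to establish this via Mikhalkin's correspondence theorem and then a specialization of the points to a ``vertically stretched'' configuration. I would proceed as follows.

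First, I would apply Mikhalkin's tropical correspondence theorem, in the version valid for $h$-transverse Newton polygons and (possibly reducible with connectedness condition) curves. This replaces $N_{\Delta,\delta}$ by a weighted count of tropical curves in $\RR^2$ with Newton polygon $\Delta$, genus $|\inte(\Delta)\cap\ZZ^2|-\delta$, connected, passing through $|\Delta\cap\ZZ^2|-1-\delta$ generic points, where each tropical curve is counted with its Mikhalkin multiplicity $\prod_v m(v)$ (a product over vertices of the dual subdivision). Since this count is independent of the choice of generic point configuration, I am free to choose one that makes the enumeration tractable.

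Second, I would specialize the $N:=|\Delta\cap\ZZ^2|-1-\delta$ point conditions to be ``vertically stretched'': pick points $p_1,\dots,p_N$ on a line of very large slope, with each successive point astronomically farther than the previous one. A standard argument, exploiting the $h$-transverse hypothesis (all non-horizontal edge directions have the form $(c,\pm 1)$), shows that any tropical curve through such a configuration decomposes uniquely into \emph{floors} (its maximal connected subgraphs after removing all vertical edges) and \emph{elevators} (the vertical edges themselves). Moreover, because of the extreme stretching, each floor must contain exactly one point and each elevator must contain exactly one point. I would then extract, from such a tropical curve, the underlying floor diagram $\D$: its vertices are the floors (with labels $(l_j,r_j,s_j)$ read off from the left/right directions and top-edge contribution at each floor) and its edges are the bounded elevators (weighted by their tropical weights). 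The cogenus-$\delta$ and connectedness conditions of the tropical curve translate directly into the analogous properties of $\D$.

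Third, I would establish the bijection between (isomorphism classes of) tropical curves through the stretched point configuration and markings $\tilde{\D}$ of connected $\Delta$-floor diagrams $\D$ of cogenus $\delta$. Given $\D$, the $s_j$ top-edge elevators above each floor correspond to Step 1 of the marking; the outgoing unbounded downward elevators needed to balance each floor correspond to Step 2; subdividing each bounded elevator by its intersection point produces Step 3; and the linear order on vertices of $\tilde{\D}$ records the order in which the $N$ points appear along the vertical direction. Conversely, a marking determines, up to horizontal translations of floors (which are fixed by the point conditions), a unique tropical curve. Two markings yield the same tropical curve iff they differ by permutations of parallel edges of equal weight, which is precisely the equivalence relation defining $\nu(\D)$.

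Finally, I would check that the Mikhalkin multiplicity of each such tropical curve equals $\mu(\D)=\prod_e w(e)^2$. The dual subdivision of a vertically stretched tropical curve has only two types of non-triangular vertices to worry about: at each vertex where an elevator of weight $w$ meets a floor, the local dual picture is a parallelogram of area $w$, contributing $w$ (actually, the vertex contribution is $w$ on each of its two endpoints, giving $w^2$ per bounded elevator); elevator endpoints on unbounded rays contribute $w$. Summing contributions over all edges of $\D$ gives exactly $\prod_e w(e)^2$, since each bounded edge is counted twice (once at each endpoint floor). Combining the bijection with the multiplicity computation yields
\[
N_{\Delta,\delta}=\sum_{\text{trop.~curves }C}\mu(C)=\sum_{\D}\mu(\D)\,\nu(\D),
\]
with $\D$ ranging over connected $\Delta$-floor diagrams of cogenus $\delta$. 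The main obstacle is the vertical stretching step: one must argue carefully that in the limit every tropical curve must route its points through floors and elevators in the prescribed combinatorial way, with no other incidence patterns surviving — this is where the $h$-transverse hypothesis is crucial.
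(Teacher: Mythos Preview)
The paper does not prove this theorem; it is quoted as Theorem~3.6 of Brugall\'e--Mikhalkin~\cite{BM2} and used as a black box. What the paper does contain is the informal outline in Section~2.1 (tropicalize via Mikhalkin's correspondence, stretch the point configuration vertically, and read off floors and elevators), which is exactly the strategy you sketch. So your proposal is not being compared against a proof in this paper but rather against the cited reference, and at the level of an outline it agrees with both Section~2.1 here and with the argument in~\cite{BM2}.

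One point in your sketch is imprecise and would need to be fixed in an actual proof: the multiplicity computation. In the vertically stretched situation the relevant vertices are trivalent, and the dual cell at a vertex where an elevator of weight $w$ meets a floor is a \emph{triangle}, not a parallelogram; its Mikhalkin multiplicity is $w$ because the floor edges have primitive direction of the form $(c,\pm 1)$ (this is where $h$-transversality enters). A bounded elevator thus picks up a factor of $w$ at each of its two endpoints, giving $w^2$, while an unbounded elevator has weight $1$ and contributes nothing. Your parenthetical remark gets the bookkeeping right, but the surrounding geometric description (``parallelogram of area $w$'', ``elevator endpoints on unbounded rays contribute $w$'') is garbled. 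This is a local fix, not a structural gap; the overall architecture of your argument is the standard one and is correct.
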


Brugall\'e's and Mikhalkin's definition of floor diagram slightly differs
from ours, but it records the same information. (Our $s_j$ is their
number of edges in ``$Edge^{+\infty}$'' adjacent to vertex $j$. Our $l_j$ is their
``$- \theta(j)$'' and our $r_j$ is their ``$\dive(j)$'').

\subsection{Severi degrees: Counting (possibly reducible) curves via (possibly disconnected) floor diagrams.}

We now extend Brugall\'{e} and Mikhalkin's result of the previous section to curve counts of possibly reducible curves. We are now interested in the \emph{Severi degree} $N^{\Delta, \delta}$, which is the number of (possibly reducible) $\delta$-nodal curves in the torus $(\CC^*)^2$, given by polynomials with Newton polygon $\Delta$, which pass through $|\Delta \cap \ZZ^2| -1-\delta$ generic points in $(\CC^*)^2$.

Severi degrees equal the numbers computed in the previous section when $\delta$ is small, and can be expressed in terms of them for any $\delta$, as we now explain, paralleling \cite[Section 1]{FM}. 

We wish to count $\delta$-nodal curves with Newton polygon $\Delta$  through a 
given generic set $\Pi \subset (\CC^*)^2$ of $|\Delta \cap \ZZ^2| - 1- \delta$ points.
Let $\Pi_1, \dots, \Pi_t$ be a partition of $\Pi$ into some number of subsets; and for each $1 \leq i \leq t$, let $C_i$ be an irreducible $\delta_i$-nodal curve with Newton
polygon $\Delta_i$ passing through the points in
$\Pi_i$, where
\begin{equation}
\label{eq:cardinality}
|\Pi_i| = |\Delta_i \cap \ZZ^2| - 1- \delta_i,
\end{equation}
The curve $C = C_1 \cup \dots
\cup C_t$ has the correct Newton polygon $\Delta$ if
\begin{equation}
\label{eq:minkowskisum}
\Delta = \Delta_1 + \cdots + \Delta_t.
\end{equation}
Also, Bernstein's theorem \cite{Be75} tells us that the number of intersection points of $C_i$ and $C_j$ is  the \emph{mixed area} $\M(\Delta_i,\Delta_j):= \frac12(Area(\Delta_i+\Delta_j)-Area(\Delta_i)-Area(\Delta_j))$ of their Newton polygons. Therefore, $C$ has the right number of nodes if 
\begin{equation}
\label{eq:nodeequation}
\delta = \sum_{i=1}^t \delta_i + \sum_{1 \le i < j \le t} \M(\Delta_i, \Delta_j).
\end{equation}
The sum $\sum_{i < j} \M(\Delta_i, \Delta_j)$ is denoted $\M(\Delta_1, \dots, \Delta_t)$ and called the mixed area of
the polygons $\Delta_1, \dots, \Delta_t$. It is easily computed in terms of the sides of the $\Delta_i$s.
%

The previous argument tells us how to express Severi degrees in terms of the numbers of the previous section.
We have
\begin{equation}
\label{eq:GromovtoSeveri}
N^{\Delta, \delta} = \sum_{\Pi = \cup \Pi_i} \sum_{(\Delta_i, \delta_i)}
\prod_i N_{\Delta_i, \delta_i},
\end{equation}
where the first sum is over all partitions of $\Pi$, and the second sum
is over all pairs $(\Delta_i, \delta_i)$ which satisfy
(\ref{eq:cardinality}), 
(\ref{eq:minkowskisum}), and (\ref{eq:nodeequation}). 
In particular, when the polygon $\Delta$ is large enough that $\delta < \M(\Delta_1, \dots, \Delta_t)$ for any nontrivial Minkowski sum decomposition $\Delta = \Delta_1 + \cdots + \Delta_t$, we have $t=1$ and $N^{\Delta,\delta} = N_{\Delta, \delta}$. 


A similar analysis holds at the level of floor diagrams. Let $\D$ be a (non necessarily connected) floor diagram.
Let $V(\D)= \cup_{i=1}^t V_i$ be the partition of
the vertices of $\D$ given by the connected components of $\D$, and let $\D_1, \ldots, \D_t$ be the corresponding (connected) floor diagrams. Define
$h$-transverse polygons $\Delta_i$ (for $1 \leq i \leq t)$ by the collections $\{(l_j, r_j, s_j) \}_{j \in V_i}$, where in each such collection
the index $j$
runs over the vertices $j$ in $V_i$. 
Finally define $\delta(\D) = \sum_i \delta(\D_i) + \M(\Delta_1, \ldots, \Delta_t)$. It is not hard to write an explicit expression for $\delta(\D)$. 
Theorem \ref{th:BM} and (\ref{eq:GromovtoSeveri}) give:

\begin{theorem}
\label{cor:keycorollary}
For any $h$-transverse polygon $\Delta$ and any $\delta \ge 0$ the Severi
degree $N^{\Delta, \delta}$ is given by
\begin{equation}
N^{\Delta, \delta} = \sum \mu(\D) \nu(\D),
\tag{{\color{blue}Severi1}}\label{eq:Severi1}
\end{equation}
summing over all (not necessarily connected) $\Delta$-floor diagrams 
$\D$ of cogenus $\delta$.
\end{theorem}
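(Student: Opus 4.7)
The plan is to reduce to Theorem \ref{th:BM} (the connected, irreducible case) using the decomposition formula \eqref{eq:GromovtoSeveri}. A (not necessarily connected) $\Delta$-floor diagram $\D$ decomposes canonically into its connected components $\D_1, \ldots, \D_t$, each a connected $\Delta_i$-floor diagram of some cogenus $\delta_i$. By the very construction of the $\Delta_i$ and $\delta(\D)$ from $\D$, one has $\Delta = \Delta_1 + \cdots + \Delta_t$ and $\delta(\D) = \sum_i \delta_i + \M(\Delta_1, \ldots, \Delta_t)$, matching \eqref{eq:minkowskisum} and \eqref{eq:nodeequation}; condition \eqref{eq:cardinality} is automatic once we identify $|\Pi_i|$ with the vertex count $N_i$ of the marked diagram $\tilde{\D}_i$. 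Conversely, any tuple of connected $\Delta_i$-floor diagrams with $\sum_i \Delta_i = \Delta$ assembles into a $\Delta$-floor diagram, so the decomposition is a bijection.

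Next I would verify that both weights appearing in \eqref{eq:Severi1} behave multiplicatively under this decomposition. The multiplicity $\mu(\D) = \prod_e w(e)^2$ factors immediately as $\prod_i \mu(\D_i)$, since every edge lives in a unique component. For the marking count, Steps~1--3 of Definition \ref{def:marking} act within each component, so $\tilde{\D}$ is the disjoint union of the $\tilde{\D}_i$, and the equivalence relation on markings (permutations of the indistinguishable vertices added in Steps~1--2) is intra-component. A linear order on $V(\tilde{\D})$ extending the component partial orders is therefore a \emph{shuffle} of linear orders on the $V(\tilde{\D}_i)$, which yields
\[
\nu(\D) \;=\; \binom{N}{N_1, \ldots, N_t}\, \prod_{i=1}^t \nu(\D_i),
\]
where $N := N_1 + \cdots + N_t = |\Delta \cap \ZZ^2| - 1 - \delta$.

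With these multiplicativities in hand, I would group the sum in \eqref{eq:Severi1} according to the tuple of invariants $(\Delta_i, \delta_i)$ of the connected components, obtaining
\begin{align*}
\sum_{\D} \mu(\D)\, \nu(\D)
&= \sum_{(\Delta_i, \delta_i)} \binom{N}{N_1, \ldots, N_t} \prod_{i=1}^t \Bigl(\, \sum_{\D_i \text{ conn.}} \mu(\D_i)\, \nu(\D_i) \Bigr)\\
&= \sum_{(\Delta_i, \delta_i)} \binom{N}{N_1, \ldots, N_t}\, \prod_{i=1}^t N_{\Delta_i, \delta_i},
\end{align*}
where the outer sum ranges over tuples satisfying \eqref{eq:cardinality}, \eqref{eq:minkowskisum}, and \eqref{eq:nodeequation}, and the second equality applies Theorem \ref{th:BM} to each component. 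The multinomial coefficient $\binom{N}{N_1, \ldots, N_t}$ counts ordered partitions $\Pi = \Pi_1 \sqcup \cdots \sqcup \Pi_t$ with $|\Pi_i| = N_i$, so the resulting expression is exactly the right-hand side of \eqref{eq:GromovtoSeveri} and therefore equals $N^{\Delta, \delta}$.

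The main subtlety, and the place where I would take the most care, is the bookkeeping when several components happen to share the same invariants $(\Delta_i, \delta_i)$: one must confirm that the passage between unordered connected-component decompositions of $\D$ on the one hand, and the ordered partitions of $\Pi$ appearing in \eqref{eq:GromovtoSeveri} on the other, is consistent on both sides of the identity. Since both sums are indexed by the same underlying multiset data, and the same symmetries act on each side, no overcounting or undercounting arises, and the identity \eqref{eq:Severi1} follows.
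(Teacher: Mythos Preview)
Your approach is exactly the one the paper intends: it simply says ``Theorem~\ref{th:BM} and \eqref{eq:GromovtoSeveri} give'' the result, and you are filling in the bookkeeping that this sentence hides.

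There is, however, a genuine gap in your shuffle step. For a \emph{fixed} disconnected $\D$, a marking must extend the \emph{total} order on $V(\D)=\{1,\dots,M\}$, not merely the orders inside each component. If black vertex $i$ lies in one component and black vertex $j>i$ in another, every marking of $\D$ still forces $i$ before $j$. Hence not every shuffle of markings of the $\tilde\D_i$ is a marking of $\tilde\D$, and in general
\[
\nu(\D)\ \neq\ \binom{N}{N_1,\dots,N_t}\prod_{i=1}^t \nu(\D_i).
\]
For a concrete counterexample, take $\D_1$ a single black vertex with $s=1$ (so $N_1=3$, $\nu(\D_1)=1$) and $\D_2$ a single black vertex with $r-l=1$, $s=0$ (so $N_2=2$, $\nu(\D_2)=1$); placing $\D_1$ at position $1$ and $\D_2$ at position $2$ gives $\nu(\D)=3$, not $\binom{5}{3,2}=10$.

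The fix is that the multinomial appears only after you also sum over the interleavings of $V(\D_1),\dots,V(\D_t)$ into $\{1,\dots,M\}$, i.e.\ over all $\Delta$-floor diagrams with the given multiset of connected components. A free shuffle of markings of the $\tilde\D_i$, restricted to the black vertices, \emph{determines} an interleaving, and this sets up a bijection yielding
\[
\sum_{\text{interleavings}} \nu(\D_{\text{interleaving}}) \;=\; \binom{N}{N_1,\dots,N_t}\prod_{i=1}^t \nu(\D_i).
\]
(In the example above, the other interleaving has $\nu(\D')=7$, and $3+7=10$.) With this correction your displayed identity and the remainder of the argument go through.
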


%
%
%
%
%
%

\begin{example}
For $\Delta = \conv\{(0,0), (0,2), (2,2), (4,0)\}$ and $\delta = 1$, one can check that there are three floor diagrams, and Theorem \ref{cor:keycorollary} gives 
\[
N^{\Delta,1} = 1 \cdot 7 + 1 \cdot 5 + 4 \cdot 2 = 20.
\]
For $\Delta' = \conv\{(0,0), (2,0), (2,2), (0,4)\}$ and $\delta = 1$, we get
\[
N^{\Delta', 1} = 1 \cdot 4 + 1 \cdot 4 + 1 \cdot 3 + 4 \cdot 1 + 4 \cdot 1 + 1 \cdot 1 = 20
\]
Notice that, by choosing to count tropical curves through a vertically stretched configuration, we have broken the symmetry between $\Delta$ and $\Delta'$. 
\end{example}

Equation (\ref{eq:Severi1}) is the first in a series of combinatorial formulas for the Severi degree $N^{\Delta,\delta}$, which we will use to prove the eventual polynomiality of $N^{\Delta, \delta}$. While the right hand side is certainly combinatorial, it is unmanageable in several ways.  The first difficulty is that the indexing set is terribly complicated. The following section provides a first step towards gaining control over it.

%

\section{Template decomposition of floor diagrams and  Severi degrees}\label{sec:templates}

We now introduce a decomposition of the
floor diagrams of Section \ref{sec:floordiagrams} into ``basic
building blocks'', called \emph{templates}. This extends earlier work of 
Fomin and Mikhalkin~\cite{FM}
who did this in the planar case.

\subsection{Templates.} 



\begin{definition}  \cite[Definition 5.6]{FM}.
\label{def:template}
A \emph{template} $\Gamma$ is a directed graph on vertices
$\{0, \dots, l\}$, where 
$l\ge 1$, with
possibly multiple edges and edge
weights $w(e) \in \ZZ_{>0}$, satisfying:
\begin{enumerate}
\item If $i \stackrel{e} {\to} j$ is an edge then $i <j$.
\item Every edge $i \stackrel{e}{\to} i+1$ has weight $w(e) \ge
  2$. (No ``short edges''.)
\item For each vertex $j$, $1 \le j \le l-1$, there is an edge
  ``covering'' it, i.e., there exists an edge $i \stackrel{e}{\to} k$ with $i <j <k$.
\end{enumerate}
\end{definition}

\begin{figure}[htbp]
\begin{center}
\begin{tabular}{c|c|c|c|c|c|c}
$\Gamma$ &
$\delta(\Gamma)$ & $l(\Gamma)$ 
& $\mu(\Gamma)$ 
& $\eps_0(\Gamma)$ & $\eps_1(\Gamma)$ & $\varkappa(\Gamma)$ 
\\
\hline \hline
&&&&&&\\[-.1in]
\begin{picture}(95,8)(-10,-4)\setlength{\unitlength}{2.5pt}\thicklines
\oo
\put(5,2){\makebox(0,0){$\scriptstyle 2$}}
\Eeee
\end{picture}
& 1 & 1 & 4 & 0 & 0 & (2)
\\[.15in]
\begin{picture}(95,8)(-10,-4)\setlength{\unitlength}{2.5pt}\thicklines
\ooo
\qbezier(0.8,0.6)(10,5)(19.2,0.6)
\end{picture}
& 1 & 2 & 1 & 1 & 1 & (1,1) 
\\
\hline \hline
&&&&&&\\[-.1in]
\begin{picture}(95,8)(-10,-4)\setlength{\unitlength}{2.5pt}\thicklines
\oo
\put(5,2){\makebox(0,0){$\scriptstyle 3$}}
\Eeee
\end{picture}
& 2 & 1 & 9 & 0 & 0 & (3)
\\[.15in]
\begin{picture}(95,8)(-10,-4)\setlength{\unitlength}{2.5pt}\thicklines
\oo
\put(5,3.5){\makebox(0,0){$\scriptstyle 2$}}
\put(5,-3.5){\makebox(0,0){$\scriptstyle 2$}}
\qbezier(0.8,0.6)(5,2)(9.2,0.6)
\qbezier(0.8,-0.6)(5,-2)(9.2,-0.6)
\end{picture}
& 2 & 1 & 16 & 0 & 0 & (4)
\\[.15in]
\begin{picture}(95,8)(-10,-4)\setlength{\unitlength}{2.5pt}\thicklines
\ooo
\qbezier(0.8,0.6)(10,4)(19.2,0.6)
\qbezier(0.8,-0.6)(10,-4)(19.2,-0.6)
\end{picture}
& 2 & 2 & 1 & 1 & 1 & (2,2)
\\[.15in]
\begin{picture}(95,8)(-10,-4)\setlength{\unitlength}{2.5pt}\thicklines
\ooo
\qbezier(0.8,0.6)(10,4)(19.2,0.6)
\put(5,-2){\makebox(0,0){$\scriptstyle 2$}}
\Eeee
\end{picture}
& 2 & 2 & 4 & 0 & 1 & (3,1) 
\\[.15in]
\begin{picture}(95,8)(-10,-4)\setlength{\unitlength}{2.5pt}\thicklines
\ooo
\qbezier(0.8,0.6)(10,4)(19.2,0.6)
\put(15,-2){\makebox(0,0){$\scriptstyle 2$}}
\eEee
\end{picture}
& 2 & 2 & 4 & 1 & 0 & (1,3)
\\[.15in]
\begin{picture}(95,8)(-10,-4)\setlength{\unitlength}{2.5pt}\thicklines
\oooo
\qbezier(0.8,0.6)(15,6)(29.2,0.6)
\end{picture}
& 2 & 3 & 1 & 1 & 1 & (1,1,1)
\\[.15in]
\begin{picture}(95,8)(-10,-4)\setlength{\unitlength}{2.5pt}\thicklines
\oooo
\qbezier(0.8,0.6)(10,5)(19.2,0.6)
\qbezier(10.8,0.6)(20,5)(29.2,0.6)
\end{picture}
& 2 & 3 & 1 & 1 & 1 & (1,2,1)
\\[-.05in]
\end{tabular}
\end{center}
\caption{The templates with $\delta(\Gamma) \le 2$.}
\label{fig:templates}
\end{figure}

Every template $\Gamma$ comes with some numerical
data associated to it, which will play an important role later. 
Its \emph{length} $l(\Gamma)$ is the number of
vertices minus $1$. The product of squares of the edge weights
is its \emph{multiplicity} $\mu(\Gamma)$. Its \emph{cogenus} $\delta(\Gamma)$
is
\[
\delta(\Gamma) = \sum_{\stackrel{e}{i \to j}} \bigg[(j-i) w(e) -1
\bigg].
\]
For $1 \le j \le l(\Gamma)$ let $\varkappa_j = \varkappa_j(\Gamma)$
denote the sum of the weights of edges $i \stackrel{e}{\to} k$ with $i
< j \le k$, which we can think of as the flow over the midpoint between $j-1$ and $j$.
If $a_j(\Gamma)$ denotes the divergence of $\Gamma$ at vertex $j$, then $a_j(\Gamma) = \varkappa_{j+1} - \varkappa_j$, 
so we can also think of $\varkappa_j$ as the cumulative divergence to the left of $j$.

Lastly, set
 \begin{displaymath}
   \eps_0(\Gamma)  = \left\{
     \begin{array}{ll}
       1 &  \text{if all edges starting at } 0 \text{ have weight }1,\\
       0 &  \text{otherwise,}
     \end{array}
   \right.
\end{displaymath}
and
 \begin{displaymath}
   \eps_1(\Gamma)  = \left\{
     \begin{array}{ll}
       1 &  \text{if all edges arriving at } l \text{ have weight }1,\\
       0 &  \text{otherwise.}
     \end{array}
   \right.
\end{displaymath}
Figure~\ref{fig:templates} (courtesy of Fomin-Mikhalkin \cite{FM})
lists all templates $\Gamma$ with $\delta(\Gamma) \le 2$. Note that,
for any $\delta$, there are only finitely many templates with cogenus $\delta$.

\subsection{Decomposing a floor diagram into templates.}

We now show how to decompose a floor diagram $\D$ on vertices $1, \dots, M$ into templates. 
Recall that for each vertex $j$ of $\D$ we record a tuple of
integers $(l_j, r_j, s_j)$.

\begin{figure}[h]
\begin{center}
\qquad \includegraphics[height=1.2cm]{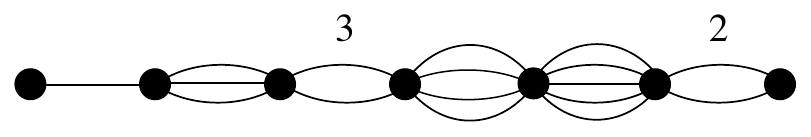}
\end{center}
\bigskip
$\mathbf{r} = (1,1,1,0,1,0,0)$, 
$\mathbf{l} = (0, 0,0,0,0,0,0)$,
$\mathbf{s} = (0,1,0,0,0,0,0)$

\caption{A floor diagram.}
\label{fig:bigfloordiagram}
\end{figure}

First, we add a vertex $0$ ($< 1$) to $\D$,
along with $s_j$ new edges of weight $1$
from $0$ to $j$ for each $1 \le j \le M$.
Then we add a vertex $M+1$ ($> M$),
together with $r_j - l_j + s_j - \dive(j)$ new edges
of weight $1$ from $i$ to $M+1$
for each $1 \le j \le M$.
The vertex divergence sequence of the resulting diagram $\D'$ is
$(d^{\,t}, r_1-l_1, \ldots, r_M-l_M, -d^{\,b})$. We drop the 
(superfluous) last entry from this sequence and as before we say
$(d^{\,t}, \rr-\ll)$ is the divergence sequence.

\begin{figure}[h]
\begin{center}
\qquad \includegraphics[height=1.5cm]{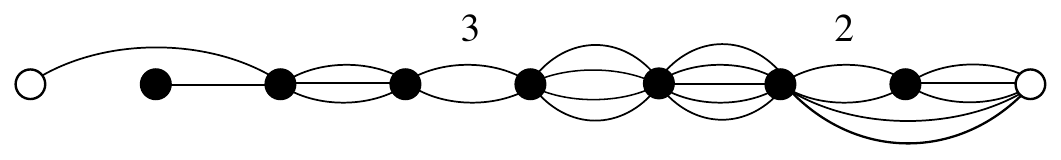}
\end{center}
\bigskip
$\mathbf{r-l} = (1,1,1,0,1,0,0)$, 
$\mathbf{s} = (0,1,0,0,0,0,0)$
\caption{The  floor diagram of Figure \ref{fig:bigfloordiagram}
with additional initial and final vertices.}
\label{fig:bigsatfloordiagram}
\end{figure}

Now remove all \emph{short edges}
from $\D'$, that is, all edges of weight 1 between consecutive
vertices. The result is an ordered collection of templates $\GGamma=(\Gamma_1,
\dots, \Gamma_m)$, listed 
left to right. We also keep track of the initial vertices $k_1, \ldots, k_m$ of these templates.

\begin{figure}[h]
\begin{center}
\includegraphics[height=2cm]{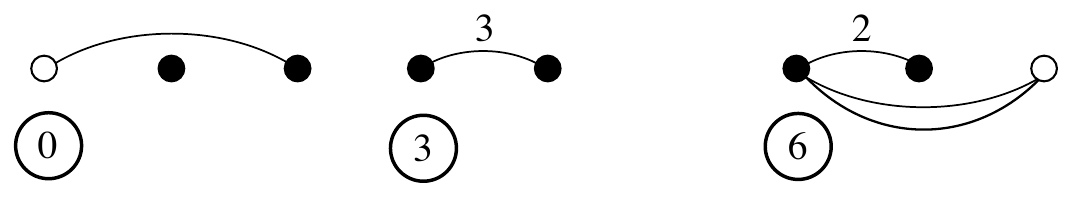}
\end{center}
\caption{The template decomposition of the floor diagram of Figure \ref{fig:bigfloordiagram}.}
\label{templates}
\end{figure}

Conversely, given the collection of templates $\GGamma=(\Gamma_1, \dots, \Gamma_m)$, the starting points $k_1, \ldots, k_m$, and the divergence sequence $\aa = (d^{\,t}, \rr - \ll)$, this process is easily reversed. To recover $\D'$, we first place the templates in their correct starting points in the interval $[1, \ldots, M]$, and draw in all the short edges that we removed from $\D'$ from left to right. More precisely, to change the divergences from $a_j(\GGamma)$ \footnote{We are denoting by $a_j(\GGamma)$ the divergence of vertex $j$ in the template $\Gamma_i$ containing it. Similarly, $\varkappa_j(\GGamma) = \varkappa_j(\Gamma_i) = a_0(\GGamma) + \cdots + a_j(\GGamma) = a_{k_i}(\Gamma_i) + \cdots + a_j(\Gamma_i)$.}
to $a_j$, we need to add $(a_0-a_0(\GGamma))+ \cdots + (a_{j-1}-a_{j-1}(\GGamma)) = (a_0 + \cdots + a_{j-1} - \varkappa_j(\GGamma))$ short edges between $j-1$ and $j$. Finally, we remove the first and last vertices and their incident edges to obtain $\D$.

Given a divergence sequence $\aa$ the possible starting points of the
templates in a collection $\GGamma = (\Gamma_1, \dots, \Gamma_m)$
 are restricted by $\aa$. More precisely, the valid 
sequences of starting points  $\kk = (k_1, \dots, k_m)$ of $\Gamma_1, \dots, \Gamma_m$ are
the ones in the set $A({\GGamma}, \aa)$ consisting of vectors $\kk
\in \NN^m$ such that
\smallskip
\begin{itemize}
\item $k_1 \ge 1 - \eps_0(\Gamma_1)$,
\smallskip
\item $ k_{i + 1} \ge k_i + l(\Gamma_i) \quad \mbox{for all } i = 1,
  \dots, m-1$, 
\smallskip
\item $k_m \le M - l(\Gamma_m) +
\eps_1(\Gamma_m)$, and
\smallskip
\item  $a_0 + \cdots + a_{k_i + j - 1} - \varkappa_j(\Gamma_i) \ge 0 \quad \mbox{for
} i = 1, \dots, m, \mbox{ and } j=1, \dots, l(\Gamma_i)$.
\end{itemize}
\smallskip
The first three inequalities guarantee that the templates fit in the
interval $[1, \ldots, M]$ without overlapping. The last condition
guarantees that the numbers of edges we need to add are
non-negative. Notice that, for fixed $\aa$, if $\delta(\D) = 0$ (\emph{i.e.}, if $\D$ is the
unique floor diagram with only short edges and $s_i = 0$ for $i \ge
2$) then $A(\GGamma, \aa)$ is empty as the decomposition removes all
edges. Due to this abnormality we exclude the case $\delta = 0$ in the sequel, though it is
not hard to see that $N^{\Delta, 0} = 1$ for all $\Delta$. 

We summarize the previous discussion in a proposition.

\begin{proposition}\label{prop:bijection}
Let $M \geq 1$, and let  $\ll, \rr \in \ZZ^M$, $\ss \in
\NN^M$. Let $d^{\,t} = s_1 + \cdots + s_M$ and $\aa = (d^{\,t},
\rr-\ll)$. The procedure of template decomposition is a bijection
between the $(\ll, \rr, \ss)$-floor diagrams and the pairs $(\GGamma, \kk)$ of a collection of templates~$\GGamma$ and a valid sequence of starting points $\kk \in A(\GGamma, \aa)$.
\end{proposition}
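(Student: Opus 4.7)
The plan is to prove Proposition \ref{prop:bijection} by verifying that the template decomposition described above is a well-defined map into pairs $(\GGamma, \kk)$ with $\kk \in A(\GGamma, \aa)$, writing down an inverse, and checking that the two procedures undo each other. The construction itself has already been spelled out explicitly in this section, so the bulk of the proof consists of checking the four defining inequalities of $A(\GGamma, \aa)$ and verifying that the reconstruction is unambiguous.

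First, I would set up the forward map. Given an $(\ll,\rr,\ss)$-floor diagram $\D$, attach the initial vertex $0$ and terminal vertex $M+1$ with the prescribed edges to obtain $\D'$; by construction the divergence of vertex $j$ in $\D'$ is $a_j = r_j - l_j$ for $1 \leq j \leq M$, and $a_0 = d^{\,t}$. Now delete every short edge and read off the ordered list of templates $\Gamma_1, \ldots, \Gamma_m$ and their starting vertices $k_1 < k_2 < \cdots < k_m$. The conditions $k_1 \geq 1 - \eps_0(\Gamma_1)$ and $k_m \leq M - l(\Gamma_m) + \eps_1(\Gamma_m)$ will come from the observation that deleting short edges incident to vertices $0$ or $M+1$ allows the first (resp.\ last) template to begin at $0$ (resp.\ end at $M+1$) precisely when the corresponding $\eps$-statistic is $1$; the condition $k_{i+1} \geq k_i + l(\Gamma_i)$ is immediate from the non-overlap of distinct templates (templates cannot share any vertex $j$ with $1 \leq j \leq M$ as all edges covering an internal template vertex are non-short). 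For the divergence inequality, note that after restoring the short edges between $j-1$ and $j$, the cumulative divergence $a_0 + \cdots + a_{j-1}$ over the vertices to the left of $j$ must equal $\varkappa_j(\GGamma)$ plus the (non-negative) number of short edges crossing the midpoint between $j-1$ and $j$; hence $a_0 + \cdots + a_{k_i + j - 1} - \varkappa_j(\Gamma_i) \geq 0$.

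Next I would define the inverse map on a pair $(\GGamma, \kk)$ with $\kk \in A(\GGamma, \aa)$: place the templates at the prescribed starting positions inside $[1, M]$, which is valid by the first three conditions, then insert short edges of weight $1$ between $j-1$ and $j$, their number being $a_0 + \cdots + a_{j-1} - \varkappa_j(\GGamma)$, which is non-negative by the fourth condition. Finally, delete the vertices $0$ and $M+1$ (and their incident edges) to obtain a graph $\D$ on $\{1, \ldots, M\}$. A direct count shows that $\dive(j) = r_j - l_j$ minus the number of edges from $0$ to $j$ (which is $s_j$) plus the edges from $j$ to $M+1$, so the resulting $\D$ is indeed an $(\ll, \rr, \ss)$-floor diagram; and the $s_j$'s recovered this way sum to $d^{\,t} = a_0$ as required.

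Finally I would argue that the two maps are mutually inverse. Composing ``decompose then reassemble'' recovers $\D$ since every edge of $\D'$ is either a short edge (counted exactly by the divergence formula) or lies in a unique template; composing in the opposite order recovers $(\GGamma, \kk)$ because templates contain no short edges, so the short-edge deletion step in the decomposition extracts exactly the templates we inserted, at exactly their prescribed starting positions. I expect the only delicate point to be the bookkeeping in the divergence inequality, making sure the contributions of the top edge ($a_0 = d^{\,t}$) and of the auxiliary edges from $0$ and $M+1$ line up correctly with the cumulative divergence $\varkappa_j$; once that identity is written out carefully, the rest of the proof is a straightforward verification.
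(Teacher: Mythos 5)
Your proposal is correct and follows essentially the same approach as the paper; the paper does not give a separate formal proof of this proposition but states it as a summary of the decomposition/reconstruction discussion that precedes it, and your write-up fills in the details of that same argument (forward map, inverse map, mutual inverse). One small caveat: your parenthetical claim that ``templates cannot share any vertex $j$ with $1 \leq j \leq M$'' is over-stated---consecutive templates in a decomposition \emph{can} share an endpoint vertex (e.g.\ when the last vertex of $\Gamma_i$ is the first of $\Gamma_{i+1}$, no non-short edge covers that shared vertex), and this is exactly why the condition reads $k_{i+1} \ge k_i + l(\Gamma_i)$ rather than the strict $k_{i+1} > k_i + l(\Gamma_i)$ your premise would imply. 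You state the correct weak inequality anyway, so the conclusion stands, but the justification should be tightened to ``templates cannot share an \emph{internal} (covered) vertex''.
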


\subsection{Multiplicity, cogenus, and markings.}

Now we show that the multiplicity, cogenus, and markings of a floor diagram behave well under template decomposition.


\subsubsection{Multiplicity.}
If a floor diagram $\D$ has template decomposition $\GGamma$, then clearly
\begin{displaymath}
\mu(\D) = \prod_{i = 1}^m \mu(\Gamma_i).
\end{displaymath}

\subsubsection{Cogenus.}
Define the \emph{reversal sets} $\Rev(\rr)$ of the sequences
$\rr$ and $\ll$ by
\[
\Rev(\rr) = \{1 \le i < j \le M: r_i < r_j \}, \quad \Rev(\ll) = \{1 \le i < j \le M: l_i > l_j \}.
\]
The asymmetry is due to the fact that
the ``natural" order for $\rr$ is the weakly decreasing one, while 
for $\ll$ it is the weakly increasing one.
Define the \emph{cogenus} of the pair $(\ll, \rr)$ as
\[
\delta(\ll, \rr) = \sum_{(i,j) \in \Rev(\rr)} (r_j - r_i) \quad + \sum_{(i,j) \in \Rev(-\ll)} (l_i - l_j).
\]
Note that in the corresponding tropical curve, $(r_j - r_i) + (l_i - l_j)$ is the number of times that floors $i$ and $j$ cross, counted with multiplicity.

Given a collection of templates $\GGamma = (\Gamma_1, \dots,
\Gamma_m)$ we abbreviate the sum over their cogenera by
$\delta(\GGamma) := \sum_{i=1}^m \delta(\Gamma_i)$. 
 The template decomposition is cogenus
preserving, in the sense that
\begin{displaymath}
\delta(\D) = \delta(\GGamma) + \delta(\ll, \rr).
\end{displaymath}
This is because the tropical curve corresponding to $\D$ has
  $\delta(\D)$ nodes, counted with multiplicity; see
  Figure~\ref{fig:tropicalcurve}. The nodes arise in one of three
  ways: from
  an elevator crossing a floor, from an elevator with multiplicity greater than $1$, or from the crossing of two
  floors. There are exactly $\delta(\GGamma)$ tropical nodes
  of the first two kinds
  and  $\delta(\ll, \rr)$ of the last kind, each counted
  with multiplicity.

\subsubsection{Markings.}
The number of markings of a floor diagram is also expressible in terms of the ``number of markings of the templates''. The reason is simple: In Step~4 of Definition \ref{def:marking}, where we need to linearly order $\tilde{\D}$, we can linearly order each template independently. We need to introduce some notation.

Let $\D$ be a floor diagram with divergence sequence $\aa = (a_0, \ldots, a_M) = (d^{\,t}, \rr-\ll)$.
For each template $\Gamma$ and
each non-negative integer $k$ (for which (\ref{eq:numberofshortedges})
is non-negative for all $j$) let $\Gamma_{(\aa, k)}$ denote the graph
obtained from $\Gamma$ by first adding
\begin{equation}
\label{eq:numberofshortedges}
a_0 + a_1 + \cdots + a_{k+j-1} - \varkappa_j(\Gamma)
\end{equation}
short edges connecting $j-1$ to $j$, for $1 \le j \le l(\Gamma)$ (so that the vertices now have divergences $a_k, \ldots, a_{k+l(\Gamma)}$), and
then subdividing each edge of the resulting graph by introducing one
new vertex for each edge. Let $P_\Gamma(\aa, k)$ be the number of linear extensions (up to
equivalence) of the vertex poset of the graph $\Gamma_{(\aa, k)}$
extending the vertex order of $\Gamma$. Then 
\begin{displaymath}
\nu(\D) = \prod_{i=1}^m P_{\Gamma_i}(\aa, k_i) = : P_{\GGamma}(\aa, \kk).
\end{displaymath}

\subsection{Severi degrees in terms of templates.}

With this machinery the Severi degree $N^{\Delta, \delta}$ can be
computed solely  in terms of templates. We conclude from  Theorem~\ref{cor:keycorollary}, Proposition~\ref{prop:bijection}, and the previous observations in this section:

\begin{proposition} \label{prop:severitemplates}
For any $h$-transverse polygon $\Delta$ and $\delta \ge
  1$ the Severi degree $N^{\Delta, \delta}$  is given by
\begin{equation}
\label{eq:Severi2}
\tag{{\color{blue}Severi2}}
N^{\Delta, \delta} = \sum_{(\ll,\rr) \, : \, \delta(\ll,\rr) \leq \delta}\,\,\, \sum_{\GGamma \, : \, \delta(\GGamma) = \delta - \delta(\ll,\rr)} \,\,
\left( \prod_{i = 1}^m
\mu(\Gamma_i) \sum_{\kk \in A(\GGamma, \aa)}
 P_{\GGamma}(\aa, \kk) \right)
\end{equation}
where the first sum is over all permutations $\ll = (l_1, \dots, l_M)$
and $\rr = (r_1, \dots, r_M)$ of the left and right directions $D_l$
and $D_r$ of $\Delta$ with $\delta(\ll,\rr) \leq \delta$, and the second sum is over collections of templates $\GGamma$ of cogenus $\delta - \delta(\ll, \rr)$.
As before, we denote the upper edge
  length $d^{\,t}$ of $\Delta$ by
$a_0$, and write $a_i = r_i - l_i$ for $1 \le i \le M$.
\end{proposition}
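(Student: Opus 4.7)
The plan is to assemble the proposition directly from the combinatorial machinery developed earlier in this section, with essentially no new calculation. I would start from Theorem~\ref{cor:keycorollary}, which writes $N^{\Delta, \delta}$ as $\sum_\D \mu(\D)\nu(\D)$ over all (not necessarily connected) $\Delta$-floor diagrams of cogenus $\delta$. The first step is to stratify the index set by the pair of permutations $(\ll, \rr)$ of the multisets $D_l$ and $D_r$ of left and right directions of $\Delta$; for each fixed $(\ll, \rr)$, what remains of a floor diagram is the non-negative sequence $\ss$ with $s_1 + \cdots + s_M = d^{\,t}$ together with the weighted directed graph structure.

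Next, I would apply the template decomposition bijection of Proposition~\ref{prop:bijection}, which replaces the graph-plus-$\ss$ data by a pair $(\GGamma, \kk)$ consisting of a collection of templates and a valid sequence of starting points $\kk \in A(\GGamma, \aa)$, with $\aa = (d^{\,t}, \rr - \ll)$. The $\ss$ data is not lost: the $s_j$ edges of weight $1$ from the auxiliary vertex $0$ to vertex $j$ are preserved inside templates whenever $j \ge 2$, while $s_1$ is recovered from $d^{\,t}$ and the other $s_j$'s, so summing over all $\ss$ with the prescribed $d^{\,t}$ is equivalent to summing over all $(\GGamma, \kk)$.

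With the indexing set reorganized, I would substitute the three factorization identities established in this section. The cogenus identity $\delta(\D) = \delta(\GGamma) + \delta(\ll, \rr)$ turns the constraint $\delta(\D) = \delta$ into the requirement $\delta(\ll, \rr) \le \delta$ and $\delta(\GGamma) = \delta - \delta(\ll, \rr)$, which accounts for the two outer summation ranges. The multiplicative identity $\mu(\D) = \prod_{i=1}^m \mu(\Gamma_i)$ depends only on $\GGamma$ and can therefore be pulled outside the inner sum over $\kk$. Finally, the marking identity $\nu(\D) = P_{\GGamma}(\aa, \kk)$ contributes the factor that remains inside the $\kk$-sum. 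Collecting terms gives formula~\eqref{eq:Severi2} verbatim.

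I do not expect a genuine obstacle: every ingredient is already available, and the argument is really a careful bookkeeping of summation indices. The only subtlety worth flagging is the hypothesis $\delta \ge 1$, which is needed precisely because the template decomposition is degenerate when $\delta(\D) = 0$ (the unique such $\D$ has only short edges, and $A(\GGamma, \aa)$ is empty once those are removed); this case is handled separately by the observation, already made in the preamble to Proposition~\ref{prop:bijection}, that $N^{\Delta, 0} = 1$ for every $\Delta$.
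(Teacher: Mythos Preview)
Your proposal is correct and follows precisely the paper's approach. The paper's own proof is a single sentence invoking Theorem~\ref{cor:keycorollary}, Proposition~\ref{prop:bijection}, and the multiplicity/cogenus/marking identities of Section~3.3; you have simply unpacked that sentence, including the bookkeeping for $\ss$ and the reason for the hypothesis $\delta \ge 1$, both of which the paper leaves implicit.
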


(\ref{eq:Severi2}) improves (\ref{eq:Severi1}) by removing the unwieldy divergence condition on floor diagrams. However, eventual polynomiality is still far from clear.


\section{Polynomiality of Severi degrees: the ``first-quadrant" case} \label{sec:polynomialityspecial}

We will now use (\ref{eq:Severi2})
to prove our main theorem: the polynomiality of the Severi degrees for
toric surfaces given by sufficiently large $h$-transverse polygons. We
do this in two steps. In this section we carry out the proof in detail for the family of \emph{first-quadrant} polygons. 
The proof of this special case exhibits essentially all the features of the general case, and has the advantage of a more transparent notation. In Section \ref{sec:polynomialitygeneral} we explain how the arguments in this section are easily adapted to the general case.

In turn, we will first prove polynomiality of the Severi degrees for a fixed toric surface and variable multidegree (Theorem \ref{thm:fixed}). It will then be easy to extend this proof to also show polynomiality as a function of the surface (Theorem \ref{thm:universal}).

\begin{notation}
We say that an $h$-transverse polygon $\Delta=\Delta(\cc,\dd)$ is a \emph{first-quadrant polytope} if $\cc^l= \textbf{0} = (0, \ldots, 0)$ and $\cc^r \geq \textbf{0}$. We will then omit $\cc^l$ and $\dd^l$ from the notation and write $\Delta(\cc,\dd) =\Delta(\cc^r, (d^{\,t}; \dd^r)) = \Delta((c_1, \ldots, c_n), (d_0; d_1, \ldots, d_n))$. 
The corresponding floor diagrams have $M = d_1 + \cdots + d_n$ vertices. 
The multisets of left and right directions, and upper edge length are 
\begin{displaymath}
D_l = \{ 0, \dots, 0\}, \quad
D_r = \{\underbrace{c_1, \dots, c_1}_{d_1}, \dots, \underbrace{c_n, \dots, c_n}_{d_n} \}, \quad d_0.
\end{displaymath}Then $\ll=\textbf{0}$ and $\aa=(d_0, \rr)$. 
We write $\delta(\rr)=\delta(\ll, \rr)$. Notice the subtle distinction between $\aa$ and $\rr$, which will become more important in Section \ref{sec:polynomialitygeneral}.
\end{notation}

For example, Figure \ref{fig:polygon} shows the polygon $\Delta((1,0),
(1; 3,1))$ which has right directions $1$ and $0$ with respective
lengths $3$ and $1$, and upper edge length equal to 1. Here $D_r=\{1,1,1,0\}$. 

\begin{figure}[h]
\begin{center}
\qquad \includegraphics[height=3cm]{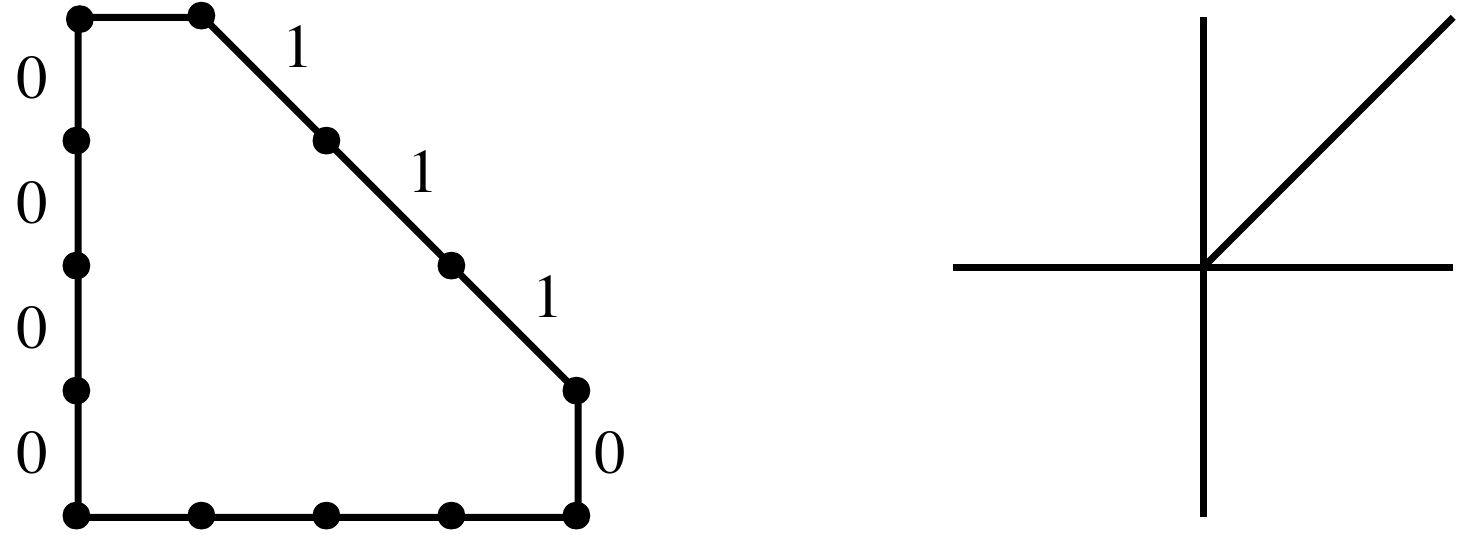}
\end{center}
\caption{The first-quadrant polygon $\Delta((1,0), (1; 3,1))$ and its normal fan.}
\label{fig:polygon}
\end{figure}

\begin{remark}
In this section we will assume that $\Delta(\cc,\dd)$ is a first-quadrant $h$-transverse polygon. We will also assume that
\[
d_0 \geq \delta, \quad d_0+c_1 \geq 2\delta, \quad d_1, \ldots, d_n \geq \delta+1 
\]
and will simply say that $\dd$ is \emph{large enough} to describe these inequalities. Throughout most of the section we we will hold $\cc$ constant and let $\dd$ vary. (When we let $\cc$ vary, we will say so explicitly.)
\end{remark}

(\ref{eq:Severi2}) now reads:
\begin{equation}
\label{eq:Severi2'}
N^{\Delta, \delta} = \sum_{\rr: \delta(\rr) \le \delta} \,\,\, 
\sum_{\GGamma: \,   \delta(\GGamma) = \delta -
  \delta(\rr)} \,\,\, \left( \prod_{i = 1}^m
\mu(\Gamma_i) \sum_{\kk \in A(\GGamma, \aa)} P_{\GGamma}(\aa, \kk) \right)
\tag{{\color{blue}Severi2'}}
\end{equation}


To show that (\ref{eq:Severi2'}) yields an eventual polynomial in $\cc$ and $\dd$, our first problem is that the index set of the first sum is hard to control: as $\cc$ and $\dd$ vary, the index set of permutations $\rr$ such that $\delta(\rr) \leq \delta$ varies quite delicately with them. In particular, these permutations can be arbitrarily long. In turn, the index set of the second sum depends very sensitively on the value of $\delta(\rr)$.
These problems are solved by presenting a more compact encoding of $\rr$.



\subsection{From permutations to swaps.}\label{sec:divtoswaps}
Let us organize the permutations $\rr$ of $D_r$ of cogenus less than or equal $\delta$ in a way which is uniform for large $\cc$ and $\dd$. 
Observe that, 
if $\dd$ is large enough, then such a permutation cannot contain a reversal of  $c_i$ and $c_j$ for $i \geq j+2$. This is because the minimum ``divergence cost" of reversing $c_{i-1}$ and $c_{i+1}$ is $d_i\min\{c_i-c_{i+1}, c_{i-1}-c_i\} \geq d_i > \delta$.


This observation allows us to encode such a permutation $\rr$ into $n-1$ sequences of $1$s and $-1$s which, for each $1 \leq i \leq n-1$, record the relative positions between the $c_i$s and the $c_{i+1}$s.

\begin{example} Suppose $\cc=(5,3,2,1)$, $\dd=(0,4,6,4,3)$, and $\rr=55335353233212121$. This permutation decomposes into three sequences of $1$s and $-1$s as follows:
\[
\begin{array}{ccccccccccccccccccc}
\aa & = & 5 &5 &3 &3&5&3&5&3&2&3&3&2&1&2&1&2&1\\
\\
1 & : & -1&-1&\1&\1&\m\1&\1&\m\1&1& &1&1& & & & & & \\
2 & : &  & &-1 &-1&&-1&&-1&\1&\m\1&\m\1&1&&1&&1&\\
3 & : &  & & &&&&&&-1&&&-1&\1&\m\1&\1&\m\1&1\\
\end{array}
\\
\]
\[
\pi_1=(1,1,-1,1,-1),\quad \pi_2 = (1,-1,-1), \quad \pi_3 = (1,-1,1,-1)
\]
\end{example}

To achieve uniformity among different sequence lengths, we delete all initial $-1$s and all final $1$s in each such sequence. The result is a \emph{swap}, which we define to be a sequence of $-1$s and $1$s which (is empty or) starts with a $1$ and ends with a $-1$.

We have encoded a permutation $\rr$ into a sequence of $n-1$ swaps $\pi=(\pi_1, \ldots, \pi_{n-1})$. Conversely, if we know $\cc$ and $\dd$ we can easily recover $\rr=:\pi(\cc,\dd)$ from $\pi$.

%
%

The following simple technical lemma will be crucial later, in the proof of Proposition \ref{prop:polytopesandpolynomials2}. 

\begin{lemma}
\label{lem:pwpoly}
Fix a collection ${\bf \pi} = (\pi_1, \dots,
\pi_{n-1})$ of swaps. Then, for
 $\cc = ( c_1 > \cdots
> c_n) \in \ZZ_{>0}^n$, $\dd = (d_0; d_1, \dots, d_n)\in \ZZ_{\geq 0}^{n+1}$ 
and $0 \le t \le \sum_{i = 1}^n
d_i$, the function (of $\cc$, $\dd$ and $t$)
\[
a_0 + a_1 + \cdots + a_t = d_0
 + \pi(\cc,\dd)_1 + \cdots + \pi(\cc,\dd)_t
\]
is piecewise polynomial in $\cc$, $\dd$ and $t$ for large enough $\dd$. Here $\aa = (d_0; \rr) = (d_0, \pi(\cc,\dd))$. The regions of polynomiality are the faces of a hyperplane arrangement.
%
\end{lemma}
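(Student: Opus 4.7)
The plan is to establish, for sufficiently large $\dd$ and fixed $\pi$, a clean block decomposition of the permutation $\rr = \pi(\cc,\dd)$, and then read off the piecewise polynomiality directly from it. Writing $u_i$ and $v_i$ for the number of $+1$s and $-1$s in $\pi_i$ (and setting $u_0 = v_n = 0$), I would show that
\begin{equation*}
\rr \,=\, M_1\, S_1\, M_2\, S_2 \cdots M_{n-1}\, S_{n-1}\, M_n,
\end{equation*}
where $M_i$ is a block of $m_i := d_i - u_{i-1} - v_i$ copies of $c_i$ and $S_i$ is the fixed finite word obtained from $\pi_i$ via $+1 \mapsto c_{i+1}$, $-1 \mapsto c_i$. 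The argument has two inputs. First, the encoding itself prescribes that the subsequence of $\rr$ on $\{c_i, c_{i+1}\}$ consists of $(d_i - v_i)$ copies of $c_i$, then $\pi_i$, then $(d_{i+1} - u_i)$ copies of $c_{i+1}$. Second, the paper has already observed that for $\dd$ large, $\rr$ contains no reversal of $c_j$ and $c_k$ with $|j-k| \geq 2$. A direct count verifies that $M_1 S_1 \cdots M_n$ satisfies all the subsequence constraints, and a short induction shows that these constraints together with the no-long-reversal condition pin down $\rr$ uniquely. The large-$\dd$ hypothesis is used precisely to ensure $m_i \geq 0$ and to rule out long reversals.

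With the decomposition in hand, I would set $L_i = u_i + v_i$ and introduce cumulative lengths $C_0 = 0$, $C_{2i-1}(\dd) = C_{2i-2}(\dd) + m_i$, $C_{2i}(\dd) = C_{2i-1}(\dd) + L_i$, each linear in $\dd$ (the $L_i$ being constants depending only on $\pi$). Exactly one of the affine forms $t - C_j(\dd)$ determines which block of the decomposition contains $t$, so the hyperplanes $\{t = C_j(\dd) : 1 \leq j \leq 2n-1\}$ cut $(\cc,\dd,t)$-space into chambers whose closures are the desired regions of polynomiality.

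On each such chamber the partial sum is immediate. If $t$ lies in the middle block $M_i$, then
\begin{equation*}
d_0 + r_1 + \cdots + r_t \,=\, d_0 + \sum_{j<i}\bigl( m_j c_j + v_j c_j + u_j c_{j+1}\bigr) + c_i\bigl(t - C_{2i-2}(\dd)\bigr),
\end{equation*}
since a complete preceding $M_j$ contributes $m_j c_j$ and a complete preceding $S_j$ contributes $v_j c_j + u_j c_{j+1}$. If instead $t$ falls at the $k$-th position of a swap block $S_i$, the extra contribution is a fixed linear form $\alpha_k c_i + \beta_k c_{i+1}$ read off from the first $k$ entries of $\pi_i$. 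In either case the answer is a polynomial of total degree at most two in $(\cc, \dd, t)$, which is what the lemma asserts.

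The main obstacle is the first step, namely verifying the block decomposition. The subtle point is that the prescribed subsequences on different pairs $\{c_i, c_{i+1}\}$ share the common letter $c_{i+1}$ and must be reconciled simultaneously, so one must be certain that no further shuffling is forced. The no-reversal-for-$|j-k| \geq 2$ condition---precisely where the large-$\dd$ hypothesis enters---is what makes the decomposition clean and localizes the shuffling to consecutive pairs. Without it, letters $c_j$ with $j \leq i-1$ could leak into the $c_{i+1}$ block and destroy the piecewise-polynomial structure we are after.
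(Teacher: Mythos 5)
Your block decomposition $\rr = M_1 S_1 M_2 S_2 \cdots M_n$ is precisely the one the paper uses implicitly: the paper writes the pure blocks as strings of $c_i$'s of length $d_i - \alpha_{i-1} - \beta_i$ (its $\alpha_i$, $\beta_i$ being your $u_i$, $v_i$), and the mixed blocks as the fixed words determined by $\pi_i$. So the overall approach is the same, and you are right to flag that the subtle point---verifying that this really is the decomposition---is exactly where the no-long-reversal condition and the large-$\dd$ hypothesis enter; the paper glosses over this with ``This is easy to see because.''

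There is, however, an error in your description of the hyperplane arrangement, and it contradicts something you write yourself. You claim the regions of polynomiality are the closures of the chambers cut out by the hyperplanes $t = C_j(\dd)$ for $1 \le j \le 2n-1$. But on the slab $C_{2i-1}(\dd) \le t \le C_{2i}(\dd)$ corresponding to a swap block $S_i$, the partial sum is \emph{not} a polynomial in $t$: as you observe, the contribution of the first $k$ entries of $S_i$ is a linear form $\alpha_k c_i + \beta_k c_{i+1}$ with coefficients read off position by position from $\pi_i$, and those coefficients are arbitrary integer-valued step functions of $k = t - C_{2i-1}(\dd)$, not polynomials in $k$. The correct arrangement is the finer one the paper writes down: it includes the hyperplanes $t = d_1 + \cdots + d_i + f$ for \emph{every} integer $f$ with $-\beta_i \le f \le \alpha_i$, equivalently $t = C_{2i-1}(\dd) + k$ for all $0 \le k \le L_i$. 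With these added, each lattice point inside a swap block lies on its own hyperplane of the arrangement (a lower-dimensional face on which $t - C_{2i-1}(\dd)$ is a fixed constant, so the formula is genuinely a single polynomial), while the open full-dimensional chambers carrying lattice points are exactly the interiors of the pure blocks $M_i$, where the partial sum is linear in $t$.
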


\begin{proof}
Say $\pi_i$ contains $\alpha_i$ $1$s and $\beta_i$ $-1$s. We claim that, for large $\dd$, the function $a_0+\cdots+a_t$ is polynomial when restricted to the lattice points in a fixed face of the following hyperplane arrangement in $(\cc,\dd,t)$-space:
\begin{displaymath}
t = d_1 + \cdots + d_i + f  \qquad \textrm{ for } -\beta_i \leq f \leq \alpha_i \quad (1 \leq i \leq n)
\end{displaymath}
This is easy to see because 
\[
\aa=(d_0, \pi(\cc,\dd)) = 
(d_0, \underbrace{c_1, \ldots, c_1}_{d_1-\beta_1},
\underbrace{c_1\textrm{s and } c_2\textrm{s}}_{\alpha_1+\beta_1}, \underbrace{c_2, \ldots, c_2}_{d_2-\alpha_1-\beta_2},
\underbrace{c_2\textrm{s and } c_3\textrm{s}}_{\alpha_2+\beta_2}, \underbrace{c_3, \ldots, c_3}_{d_3-\alpha_2-\beta_3},\ldots)
\]
where the order of the $c_i$s and $c_{i+1}$s is determined by $\pi$. Thus $a_0 + a_1 + \cdots + a_t$ equals
\[
\begin{cases}
d_0+tc_1 & \textrm{ if } 0 \leq t \leq d_1-\beta_1 \\
\quad d_0+tc_1+* & \quad \textrm{ if } t = d_1-\beta_1+1 \\
\quad \quad \vdots & \quad \quad \vdots \\
\quad d_0+tc_1+*+\cdots+* & \quad \textrm{ if } t= d_1+\alpha_1 -1\\
d_0+d_1c_1+(t-d_1)c_2 & \textrm{ if } d_1+\alpha_1 \leq t \leq d_1+d_2-\beta_2\\
\quad d_0+d_1c_1+(t-d_1)c_2+* & \quad \textrm{ if } t = d_1+d_2-\beta_2+1\\
\quad \quad \vdots & \quad \quad \vdots \\
\quad d_0+d_1c_1+(t-d_1)c_2+*+\cdots+* &\quad  \textrm{ if } t=  d_1+d_2+\alpha_1 -1\\
d_0+d_1c_1+d_2c_2+(t-d_1-d_2)c_3 & \textrm{ if } d_1+d_2+\alpha_1 \leq t \leq d_1+d_2+d_3-\beta_3 \\
\quad \vdots & \quad \vdots 
\end{cases}
\]
where each $*$ represents a $c_i$ determined by $\pi$. The claim follows.
%
\end{proof}

Using the encoding of permutations into swaps, we now replace the first sum in
(\ref{eq:Severi2'}) by a sum over swaps. 
Let the \emph{number of inversions} $\inv(\pi)$ of a swap $\pi$ be
\[
\inv(\pi) = \# \{(i,j) \in \ZZ^2 : 1 \leq i < j \leq n-1 \text{ and } \pi(i) > \pi(j) \}.
\]
It is easy to see that $\delta(\rr) = \sum_{i = 1}^{n-1} \inv(\pi_i)
(c_{i} - c_{i + 1})$. We obtain that, for large $\dd$, 
\begin{equation}
\tag{{\color{blue}Severi3'}}
\label{eq:Severi3}
N^{\Delta, \delta} = \sum_{{\bf \pi} } \, \,
\sum_{\GGamma: \, \, \delta(\GGamma) = \delta -
  \delta(\rr)} \quad \left( \prod_{i = 1}^m
\mu(\Gamma_i)  \sum_{\kk \in A(\GGamma, \aa) \cap \ZZ^m} 
P_{\GGamma}(\aa, \kk) \right)
\end{equation}
where the first sum is now over all sequences ${\bf \pi} = (\pi_1, \dots, \pi_{n-1})$ of swaps with $\sum_{i = 1}^{n-1} \inv(\pi_i)
(c_{i} - c_{i + 1}) \le \delta$,  $\rr = {\bf \pi}(\cc, \dd)$, $\aa=(d_0, \rr)$ and the other sums are as before.


\medskip

For fixed $\cc$, the first sum in (\ref{eq:Severi3}) is finite and its index set is independent of $\dd$. Also, for each $\pi$ in that index set, $\delta(\rr)$ is independent of $\dd$, and hence so is the set of templates $\GGamma$ in the second sum. The difficulty encountered in (\ref{eq:Severi2'}) is resolved. 

If $\cc$ is variable this observation still applies, under the additional assumption that $\cc$ grows quickly enough that $c_i - c_{i+1} > \delta$ for all $i$. In that case, the first sum will only include the trivial swap sequence $\pi$ where every swap is empty, and then the index set of the second sum will still be independent of $\dd$, and also of $\cc$.
%

\medskip

In (\ref{eq:Severi3}) we have expressed $N^{\Delta, \delta}$ as a weighted sum of finitely many contributions of the form
\[
N^{\Delta, \delta}_{\pi, \GGamma} := 
 \sum_{\kk \in A(\GGamma, \aa) \cap \ZZ^m} P_{\GGamma}(\aa, \kk), 
\]
where $\aa = (d_0, \pi(\cc,\dd))$. Our final goal is to show that, for fixed $\delta, \GGamma$, and $\pi$, and for large $\dd$, this function varies piecewise polynomially in $\cc$ and $\dd$. 
We will do it over the course of the Sections \ref{sec:polytopality} -- \ref{sec:polySeveri} by showing that $A(\GGamma, \aa)$ is a variable polytope and $P_{\GGamma}(\aa, \kk)$ is piecewise polynomial, and then recurring to some facts about such \emph{discrete integrals}.


\subsection{Polytopality of $A(\Gamma,\aa)$.}\label{sec:polytopality}

Our next key proposition states that, for large enough $\cc$ and $\dd$, the innermost index set $A(\Gamma,\aa) \cap \ZZ^m$ of (\ref{eq:Severi3}) is the set of lattice points in a polytope. While it does vary as a function of $\dd$, it does so in a controlled way. 


\begin{proposition}
\label{prop:polytopesandpolynomials}
Let $\pi=(\pi_1, \ldots, \pi_{n-1})$ be a fixed sequence of swaps and
let \,\, $\GGamma = (\Gamma_1, \dots, \Gamma_m)$ be a fixed collection of
templates. Let $(c_1 > \cdots > c_n) \in \ZZ_{\geq 0}^n$ and $\dd =
(d_0; d_1, \dots, d_n) \in \ZZ_{\ge 0}^{n+1}$ be variable and assume
that $d_0 \geq \delta(\GGamma)$, $d_0+c_1 \geq 2\delta(\GGamma)$. Let $\aa=(d_0, \rr) = (d_0, \pi(\cc,\dd))$. 
Then
 $A(\GGamma, \aa)$ is the set of lattice points in a polytope whose facet directions are fixed, and whose facet parameters are linear functions of 
$d_0, \dots, d_n$.
\end{proposition}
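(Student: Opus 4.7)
The plan is to read $A(\GGamma,\aa)$ off as the integer points of the polytope cut out by the four inequality families of its definition, and to show that under the stated size hypotheses the only nonlinear family---the one involving partial sums of $\aa$---is automatically satisfied. Only the three remaining linear families then cut out the polytope, and they visibly have fixed facet directions and facet parameters linear in $\dd$.

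Unpacking the definition, $A(\GGamma,\aa)$ consists of $\kk \in \ZZ^m$ satisfying (i) $k_1 \geq 1-\eps_0(\Gamma_1)$, (ii) $k_{i+1}-k_i \geq l(\Gamma_i)$ for $1 \leq i \leq m-1$, (iii) $k_m \leq (d_1+\cdots+d_n) - l(\Gamma_m) + \eps_1(\Gamma_m)$, and (iv) $a_0+\cdots+a_{k_i+j-1} \geq \varkappa_j(\Gamma_i)$ for all $i,j$. The families (i)--(iii) are linear in $\kk$ with facet directions $-e_1$, $e_i-e_{i+1}$, and $e_m$ (independent of $\cc,\dd,\pi$) and with facet parameters that are either constant or equal to $d_1+\cdots+d_n-l(\Gamma_m)+\eps_1(\Gamma_m)$, linear in $\dd$. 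It remains to show that (iv) is redundant.

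The arithmetic heart of the argument is the bound $\varkappa_j(\Gamma) \leq 2\delta(\Gamma)$ for any template: since short edges of weight $1$ are forbidden, every edge $i \stackrel{e}{\to} k$ satisfies $(k-i)w(e) \geq 2$, contributes at least $1$ to $\delta(\Gamma)$ (so $\#\{\text{edges of }\Gamma\} \leq \delta(\Gamma)$), and has $w(e) \leq (k-i)w(e)-1+1$; summing over edges covering $j$ yields the bound. In the first-quadrant case $a_t \geq 0$ for $t \geq 1$, so $a_0+\cdots+a_t$ is non-decreasing in $t$; together with (i)--(ii), which force $k_i \geq i-1$, this gives $k_i+j-1 \geq 0$, with equality only when $i=j=1$, $k_1=0$, and $\eps_0(\Gamma_1)=1$. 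For $k_i+j-1 \geq 1$, the standing assumption $d_1 \geq \delta+1$ and $\beta_1 \leq \inv(\pi_1) \leq \delta$ give $a_1 = c_1$ via Lemma~\ref{lem:pwpoly}, whence
\[
a_0+\cdots+a_{k_i+j-1} \geq d_0+c_1 \geq 2\delta(\GGamma) \geq \varkappa_j(\Gamma_i);
\]
for $k_i+j-1=0$, $\eps_0(\Gamma_1)=1$ forces every edge from vertex $0$ in $\Gamma_1$ to have weight $1$, so $\varkappa_1(\Gamma_1) \leq \#\{\text{edges of }\Gamma_1\} \leq \delta(\Gamma_1) \leq \delta(\GGamma) \leq d_0 = a_0$. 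Either way (iv) holds automatically, and $A(\GGamma,\aa)$ is exactly the set of integer points of the polytope cut out by (i)--(iii). The main obstacle is establishing the inequality $\varkappa_j \leq 2\delta$; once that is in place, the rest is routine bookkeeping and a case-by-case comparison of the partial sum $a_0+\cdots+a_{k_i+j-1}$ against $\varkappa_j(\Gamma_i)$.
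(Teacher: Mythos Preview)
Your proof is correct and follows essentially the same strategy as the paper: bound $\varkappa_j(\Gamma_i) \leq 2\delta(\GGamma)$, compare against $a_0+a_1$ to dispose of the nonlinear family (iv), and handle the boundary case $k_1=0,\,j=1$ separately via $\eps_0(\Gamma_1)=1$. The only minor variation is that you argue $a_1=c_1$ directly from the section's standing assumption $d_1 \geq \delta+1$, whereas the paper instead uses the inequality $\delta(\rr) \geq c_1-r_1$ to accommodate the possibility $r_1 < c_1$; both routes reach the same comparison $a_0+a_1 \geq 2\delta(\GGamma)$ and both quietly lean on the ambient size assumptions of the section rather than only the hypotheses stated in the proposition.
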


\begin{proof}
The only non-linear conditions defining  $A(\GGamma, \aa)$  are the inequalities
\[
a_0 + \cdots + a_{k_i + j - 1} - \varkappa_j(\Gamma_i) \ge 0.
\]
for  $i = 1, \dots, m$ and $ j=1, \dots, l(\Gamma_i)$. We will show
that, under these hypotheses, they hold ``for free". In fact, we will
show the stronger statement $a_0 + a_1 - \varkappa_j(\Gamma_i) \ge 0$.
This implies all other inequalities (since all $a_j$ are non-negative)
with the possible exception of $a_0 - \varkappa_1(\Gamma_1) \geq 0$ if $k_1=0, j=1$, which we deal with separately.

For every $i = 1, \dots, m$ we have 
\[
\varkappa_j(\Gamma_i) \le \sum_{\stackrel{e}{r \to s}} w(e) \le 2 \sum_{\stackrel{e}{r \to s}} \bigg[(s-r) w(e) -1
\bigg] = 2\delta(\Gamma_i) \le 2 \delta(\GGamma),
\]
where the sums are over the edges of $\Gamma_i$.
Therefore $\varkappa_j(\Gamma_i) \le d_0 + 
c_1 - 2\delta(\rr) $.  Finally, from the definition of the cogenus
$\delta(\rr)$ 
it is clear that $\delta(\rr) \ge c_1 -
r_1$. Therefore $\varkappa_j(\Gamma_i) \le d_0+r_1 - \delta(\rr) \leq a_0 + a_1 $ as desired, since $a_0=d_0$ and $a_1=r_1$.

Now we prove $a_0 - \varkappa_1(\Gamma_1) \geq 0$ for $k_1=0$ and $j=1$. 
If $k_1=0$ we must have $\eps_0(\Gamma_1) = 1$, so all
edges of $\Gamma_1$ adjacent to vertex $0$ have weight
$1$ and there are no edges between $0$ and $1$. Thus, we have $\varkappa_1(\Gamma_1) \le \delta(\Gamma_1) \le
\delta(\GGamma)$, and from our assumption $\delta(\GGamma) \le d_0$ we
conclude that $\varkappa_1(\Gamma_1) \le a_0$.

Finally notice that the facet directions are fixed, and the only non-constant facet parameter depends only on $M = d_1 + \cdots + d_n$.
\end{proof}

\begin{remark}
In a sense, the ``real content" of the previous proof is the following statement: when $d_0 \ge \delta$ and
$d_0 + c_1 \ge 2\delta$,
all templates can move between the
first and last vertex of the floor diagram
without obstruction.
\end{remark}

 \subsection{Piecewise polynomiality of $P_{\GGamma}(\aa,\kk)$.} \label{sec:PP}

%

\bigskip

\begin{proposition}
\label{prop:polytopesandpolynomials2}
Let  $\pi = (\pi_1, \ldots, \pi_{n-1})$ be a fixed collection of swaps and let \,\, $\GGamma =
(\Gamma_1, \dots, \Gamma_m)$ be a fixed collection of templates. Let
$\cc = (c_1 > \cdots > c_n) \in \ZZ_{>0}^n$ and $\dd = (d_0; d_1, \dots, d_n) \in \ZZ_{\ge 0}^{n+1}$ be variable and $\aa=(d_0, \rr) = (d_0, \pi(\cc,\dd))$. Let $\kk \in A(\GGamma, \aa) \cap \ZZ^m$ be variable.
Then the
  function $P_{\GGamma}(\aa,\kk)$ 
  is piecewise
  polynomial in $\cc$, $\dd$ and $\kk$.
  The domains of polynomiality are faces of a hyperplane arrangement.
\end{proposition}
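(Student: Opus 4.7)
The plan is to factor $P_{\GGamma}(\aa, \kk) = \prod_{i=1}^m P_{\Gamma_i}(\aa, k_i)$ and show that each factor $P_{\Gamma_i}(\aa, k_i)$ is piecewise polynomial in $(\cc, \dd, k_i)$. The key observation is that the subdivided graph $\Gamma_{i,(\aa, k_i)}$ --- and hence the count $P_{\Gamma_i}$ --- depends on $(\aa, k_i)$ only through the short-edge counts
$$q_{i,j} \,:=\, a_0 + a_1 + \cdots + a_{k_i + j - 1} - \varkappa_j(\Gamma_i), \qquad 1 \leq j \leq l(\Gamma_i),$$
so it suffices to show that $P_{\Gamma_i}$ is a polynomial in the tuple $(q_{i,1}, \ldots, q_{i,l(\Gamma_i)})$, and then invoke Lemma~\ref{lem:pwpoly} (with $t = k_i + j - 1$) to pull this back to a piecewise polynomial in $(\cc, \dd, k_i)$ on faces of a hyperplane arrangement.

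For the polynomiality in the $q$'s, fix a template $\Gamma$ of length $l$ and count the valid linear extensions of the subdivided graph by orbit-counting. Let $\mathcal{O}$ be the set of such orderings, and let $G$ be the structural automorphism group of the subdivided graph specified by Definition~\ref{def:marking}, which acts freely on $\mathcal{O}$ (distinct subdivision vertices occupy distinct positions in any ordering). Stratify $\mathcal{O}$ by \emph{placement types}: a type specifies, for each subdivision vertex of a non-short edge $e$ of $\Gamma$ from $i_e$ to $k_e$, the unique gap $g \in \{i_e+1, \ldots, k_e\}$ it occupies in the ordering. For a type with $p_j$ non-short subdivision vertices in gap $j$, the number of orderings of that type is $\prod_j (q_j + p_j)!$. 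The group $G$ has order $C_\Gamma \prod_j q_j!$, where the product accounts for the $q_j$ indistinguishable short-edge subdivisions per gap and $C_\Gamma$ is a constant (independent of $q$ and of the placement type) coming from permutations of equivalent non-short edges. Therefore
$$P_\Gamma \,=\, \frac{|\mathcal{O}|}{|G|} \,=\, \frac{1}{C_\Gamma} \sum_{\text{placement types}} \prod_{j=1}^{l} \prod_{s=1}^{p_j}(q_j + s),$$
a finite sum of polynomials in $(q_1, \ldots, q_l)$, hence itself a polynomial.

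Composing this polynomial with the piecewise polynomial functions $q_{i,j}(\cc, \dd, k_i)$ supplied by Lemma~\ref{lem:pwpoly} gives a piecewise polynomial $P_{\Gamma_i}(\aa, k_i)$ in $(\cc, \dd, k_i)$, with regions of polynomiality being faces of a hyperplane arrangement obtained by refining the one in the lemma via the substitutions $t = k_i + j - 1$ over $1 \leq j \leq l(\Gamma_i)$. Taking the product over $i = 1, \ldots, m$ preserves this structure and yields the claimed piecewise polynomiality of $P_{\GGamma}(\aa, \kk)$ in $(\cc, \dd, \kk)$.

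The main obstacle is the careful bookkeeping of the equivalence relation in the orbit-count step. One must verify from Definition~\ref{def:marking} that $G$ factors cleanly as claimed, with no extraneous source of equivalence --- in particular, that any nontrivial template automorphism moving template vertices is excluded because automorphisms must preserve the vertices of $\Gamma$ pointwise --- and that $G$ acts freely on $\mathcal{O}$, so that $|\mathcal{O}/G| = |\mathcal{O}|/|G|$. Any subtler source of equivalence would destroy the clean $q_j!$ factorization and hence the polynomiality in $q$.
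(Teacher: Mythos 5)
Your proposal is correct and follows the same overall route as the paper: factor $P_{\GGamma}=\prod_i P_{\Gamma_i}$, observe that $P_{\Gamma}$ depends on $(\aa,k)$ only through the short-edge counts $q_j = a_0+\cdots+a_{k+j-1}-\varkappa_j(\Gamma)$, prove $P_\Gamma$ is a polynomial in $(q_1,\ldots,q_l)$, and then pull back through Lemma~\ref{lem:pwpoly} to get piecewise polynomiality in $(\cc,\dd,\kk)$ with faces of the resulting hyperplane arrangement as domains. Where you diverge is in the central combinatorial sub-step. The paper builds a linear extension in two stages: first choose, modulo equivalence, a linear extension of the subdivision of $\Gamma$ alone (a finite set of choices depending only on $\Gamma$, each determining constants $b_j$), then interleave the $q_j$ indistinguishable short-edge midpoints into gap $j$, contributing $\binom{q_j+b_j}{b_j}$; summing over the first-stage choices gives $P_\Gamma$ directly as a finite sum of products of binomial coefficients. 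You instead count all raw total orders stratified by placement types and divide by $|G|$ via a free-action orbit count, obtaining $P_\Gamma=\frac{1}{C_\Gamma}\sum_{\text{types}}\prod_j\frac{(q_j+p_j)!}{q_j!}$. Both yield polynomials in $q$, but your route requires you to verify (as you flag but do not carry out) that $G$ acts freely and that $|G|=C_\Gamma\prod_j q_j!$ factors cleanly. For the record these do check: a nontrivial weighted-graph automorphism fixing the template vertices must move some midpoint vertex, hence changes any total order, so the action is free; and since every added short edge runs from $j-1$ to $j$ with weight~$1$ while any original template edge between consecutive vertices has weight $\ge 2$, an automorphism cannot mix short-edge midpoints with template-edge midpoints, so $G$ is a direct product of the symmetric groups $S_{q_j}$ on the short-edge midpoints times the $(\aa,k)$-independent automorphism group of $\Gamma$ itself. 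The paper's two-stage construction sidesteps this group-theoretic bookkeeping and is the more economical argument, but your approach is sound.
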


\begin{proof}
Recall that $P_{\GGamma}(\aa,\kk) = \prod_{i=1}^m P_{\Gamma_i}( \aa, k_i)$. 
Let $\Gamma=\Gamma_i $ be one of the templates in $\GGamma$ and let $k=k_i$.
By definition, $P_\Gamma(\aa, k)$ is the number of linear extensions of the acyclic graph $\Gamma_{(\aa, k)}$ extending the order of the template $\Gamma$. Recall how this graph $\Gamma_{(\aa, k)}$ is obtained from $\Gamma$: we add in the right number of short edges to $\Gamma$ (more precisely, $a_0 + \cdots + a_{k+j-1} -
  \varkappa_{j}$ edges between vertices $j-1$ and $j$) so that the resulting graph has divergences $a_k, \ldots, a_{k+l(\Gamma)-1}$, and then we introduce a new vertex at the midpoint of each edge.

Such a linear extension on $\Gamma_{(\aa, k)}$ can be constructed in
two steps. In Step 1, we choose a linear order (modulo equivalence) of
the graph formed by the vertices $0, \dots, l(\Gamma)$ of $\Gamma$ and
the midpoint vertices coming from edges of $\Gamma$. In Step 2 we
insert the midpoint vertices of the new edges of $\Gamma_{(\aa, k)}$
into the linear order of Step~1. If $b_j$ is the number of vertices
between $j-1$ and $j$ in the linear order of Step~1, there are
\begin{equation}
\label{eq:numberofextensions}
\prod_{j=1}^l 
\binom{a_0 + \cdots + a_{k+j-1} -
  \varkappa_{j}(\Gamma) + b_j}{b_j}
\end{equation}
ways to insert those midpoints, up to equivalence. 

Notice that the parameters $\varkappa_{j}(\Gamma)$ and $b_j$ are constants that depend only on $\Gamma$.
Lemma \ref{lem:pwpoly} tells us that $a_0 + \cdots + a_{k+j-1}$ is a
piecewise polynomial in $\cc$, $\dd$ and $k$, and the proof describes
the domains of polynomiality. This allows us to conclude that the
expression of (\ref{eq:numberofextensions}) is polynomial on each face of the hyperplane arrangement
\[
k = d_1 + \cdots + d_i + f, \qquad  -\beta_i - l(\Gamma) + 1 \leq f \leq \alpha_i, 
\]
for $1 \leq i \leq n$,
and thus $P_{\GGamma}(\aa,\kk)$ is polynomial on each face of the following arrangement $\A$ in $(\cc,\dd,\kk)$-space:
\[
\A: \quad k_s = d_1 + \cdots + d_i + f, \qquad  -\beta_i - l(\Gamma_s) + 1 \leq f \leq \alpha_i, 
\]
for $1 \leq i \leq n$ and $1\leq s \leq m$.
\end{proof}


\medskip

\subsection{Discrete integrals of polynomials over polytopes.}\label{sec:discint}

In Section \ref{sec:polytopality} we showed that $A(\GGamma, \aa) \cap \ZZ^m$ is the set of lattice points in a polytope with fixed facet directions, and whose facet parameters are linear functions of $\dd$. Since this set only depends on~$\dd$, we relabel it $A(\GGamma, \dd) \cap \ZZ^m$.
In Section \ref{sec:PP} we showed that $P_{\GGamma}(\aa, \kk)$ is a piecewise polynomial function of $\cc, \dd,$ and $\kk$, whose domains of polynomiality are cut out by a hyperplane arrangement $\mathcal A$. The equations of this arrangement have fixed normal directions, and parameters which are linear functions of $\dd$ and $\kk$. It follows that 
\[
N^{\Delta, \delta}_{\pi, \GGamma}= 
\sum_F  \sum_{\kk \in (A(\GGamma, \dd) \cap F)^o \cap \ZZ^m} P^F_{\GGamma}(\cc, \dd, \kk), 
\]
summing over the faces $F$ of $\mathcal A$, where each $P_\GGamma^F$ is a polynomial. Here $Q^o$ denotes the relative interior of $Q$, \emph{i.e.}, the interior of $Q$ with respect to its affine span. We get:
\begin{equation}
\tag{{\color{blue}Severi4}}
\label{eq:Severi4}
N^{\Delta, \delta}= 
\sum_{\pi, \GGamma, F} \,\, \sum_{\kk \in (A(\GGamma, \dd) \cap F)^o \cap \ZZ^m} P^F_{\GGamma}(\cc, \dd, \kk).
\end{equation}
This is a somewhat messy expression, but the point is that there is a finite number of choices for $\pi, \GGamma,$ and $F$, and these choices are independent of $\dd$. Now we just need to prove the polynomiality of the inner sum, which is a discrete integral of a polynomial function over a variable open polytope.

\medskip

To do so, we invoke some results on discrete integrals. Given a polytope $Q \subset \RR^m$ and a function $f:\RR^m \rightarrow \RR$, we define the \emph{discrete integral} of $f$ over $Q$ to be
\[
\sum_{q \in Q \cap \ZZ^m} f(q).
\]

Recall that an $m$-polytope is \emph{simple} if every vertex is contained in exactly $m$ edges. It is \emph{integral} if all its vertices have integer coordinates. 
A \emph{facet translation} of a polytope $P=\Pi_X(\yy) = \{\kk \in \RR^m \, : \, X\kk \leq \yy\}$ is a polytope of the form $\Pi_X(\yy') = \{\kk \in \RR^m \, : \, X\kk \leq \yy'\}$ for $\yy' \in \RR^l$, obtained by translating the facets of $P$. 
We assume that $X$ is an integer matrix and 
say $\Pi_X(\yy')$ is an \emph{integer facet translation} if $\yy' \in \ZZ^l$. Say that the matrix $X$ is \emph{unimodular}, and that $P$ is \emph{facet-unimodular}, if every maximal minor has determinant $-1, 0,$ or $1$.  When this is the case, every integer facet translation $\Pi_X(\yy')$ has integral vertices by Cramer's rule.

The values of $\yy'$ for which $\Pi_X(\yy')$ and $P$ are combinatorially equivalent form an open cone in $\RR^m$; its closure is the \emph{deformation cone} of $P$. The corresponding polytopes are called \emph{deformations} of $P$. \cite{P05, PRW}

Recall that a \emph{quasipolynomial} function on a lattice $\Lambda$
is a function which is polynomial on each coset of some finite index sublattice $\Lambda' \subseteq \Lambda$. Results like the following are known, although we have not found in the literature the precise statement that we need:

\begin{lemma}\label{lemma:discreteintegral1}
Consider the integer facet translations of a simple rational polytope $P$ with fixed facet directions and variable facet parameters, \emph{i.e.}, the polytopes 
\[
\Pi_X(\yy) = \{\kk \in \RR^m \, : \, X\kk \leq \yy\},
\]
where $X \in \ZZ^{l \times m}$ is a fixed $l \times m$ matrix and $\yy \in \ZZ^l$ is a variable vector.
Let $f$ be a polynomial function and let
\[
g(\yy) = \sum_{\kk \, \in \, \Pi_X(\yy) \cap\,  \ZZ^m} f(\kk), \qquad g^o(\yy) = \sum_{\kk \, \in \, \Pi_X(\yy)^o \cap\,  \ZZ^m} f(\kk).
\]
be the discrete integrals of $f$ over $\Pi_X(\yy)$, and over its relative interior.
Then $g(\yy)$ and $g^o(\yy)$ are piecewise quasipolynomial functions of $\yy$. The domains of quasipolynomiality are given by linear conditions in $\yy$. More concretely, these functions are quasipolynomial when restricted to those $\yy$ for which the polytope $\Pi_X(\yy)$ has a fixed combinatorial type. 
\\
Furthermore, if $X$ is unimodular, then $g(\yy)$ and $g^o(\yy)$ are piecewise polynomial.\end{lemma}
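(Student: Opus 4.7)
The plan is to reduce to the case of fixed combinatorial type and then apply Brion's theorem, together with a differentiation trick to handle the polynomial weight $f$.

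The first step is to stratify the parameter space. The set of $\yy \in \RR^l$ for which $\Pi_X(\yy)$ realizes a fixed combinatorial type of polytope is a relatively open polyhedral cone in $\RR^l$: it is determined by which subsets of $m$ facets intersect in a vertex, which vertices are connected by edges, and so on, all governed by linear inequalities among the coordinates of $\yy$. These cones together with their lower-dimensional faces form a polyhedral fan in $\RR^l$ whose cones are cut out by linear conditions on $\yy$, so it suffices to prove that $g(\yy)$ and $g^o(\yy)$ are quasipolynomial (respectively polynomial) on each such cone.

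The second step is to parameterize the vertices and apply Brion's formula. On a fixed cone, the vertices $v_1,\ldots,v_N$ of $\Pi_X(\yy)$ are in bijection with fixed $m$-subsets $I_1,\ldots,I_N$ of facet indices such that $X_{I_s}$ is nonsingular; then $v_s(\yy) = X_{I_s}^{-1}\,\yy_{I_s}$ is a linear function of $\yy$, and the tangent cone at $v_s$ has the form $v_s(\yy) + K_{I_s}$ for a fixed pointed rational cone $K_{I_s} \subset \RR^m$. By Brion's theorem,
$$\sum_{\kk \in \Pi_X(\yy) \cap \ZZ^m} z^\kk \;=\; \sum_{s=1}^N \sigma_{v_s(\yy) + K_{I_s}}(z)$$
as rational functions in $z$. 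Substituting $z = e^\xi$ and applying the constant-coefficient differential operator $f(\partial_\xi)$ at $\xi=0$ extracts $g(\yy) = \sum_\kk f(\kk)$; although each individual summand on the right has a pole at $\xi=0$, these poles cancel, and the classical Brion--Vergne (or Berline--Vergne) analysis of the Laurent expansions shows that the value at $\xi=0$ is a polynomial in the vectors $v_1(\yy),\ldots,v_N(\yy)$, hence a polynomial in $\yy$, provided the fractional parts of $v_s(\yy)$ are constant in $\yy$. When $X$ is unimodular, each $X_{I_s}$ has determinant $\pm 1$, so $v_s(\yy) \in \ZZ^m$ whenever $\yy \in \ZZ^l$ and $K_{I_s}$ is a unimodular cone; this yields genuine polynomiality. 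In general, the residue $v_s(\yy) \bmod \ZZ^m$ depends on $\yy$ modulo a finite-index sublattice of $\ZZ^l$, producing quasipolynomial behavior on each cone.

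The third step handles the open polytope. Brion's theorem has a version in which every closed tangent cone is replaced by its relative interior, yielding an analogous generating-function identity; the same differentiation argument then produces the piecewise (quasi)polynomial formula for $g^o(\yy)$. Equivalently, one may proceed by inclusion-exclusion over the faces of $\Pi_X(\yy)$: each face is a polytope of the form $\Pi_{X'}(\yy'(\yy))$ for a fixed submatrix $X'$ of $X$ and a linear function $\yy'(\yy)$, so induction on dimension reduces the open case to the closed case. The main obstacle I expect is verifying carefully the pole cancellation in Brion's identity after applying $f(\partial_\xi)$, and tracking that in the unimodular case the resulting expression depends polynomially (and not merely quasipolynomially) on $\yy$; this follows from the unimodularity of each cone $K_{I_s}$, but requires some bookkeeping to state cleanly.
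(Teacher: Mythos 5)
Your proof is correct, but it takes a genuinely different (and more self-contained) route than the paper. The paper's proof is essentially a citation: it points to \cite[Theorem~19.3]{P05} for the lattice-point count ($f=1$) in the unimodular case, asserts that the argument ``is easily adapted'' to the non-unimodular case and to arbitrary polynomial weights $f$, and handles the open polytope by inclusion-exclusion over boundary faces. You instead give an actual argument: stratify the parameter space by combinatorial type, parametrize vertices linearly in $\yy$ on each stratum, apply Brion's theorem, and extract the weighted sum with the differential operator $f(\partial_\xi)$ at $\xi=0$, invoking the Brion--Vergne/Berline--Vergne analysis for pole cancellation and (quasi)polynomial dependence on the vertex vectors. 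Your route makes the general-$f$ case and the unimodular-vs-quasipolynomial dichotomy transparent, since both follow directly from the Laurent expansion of $\sigma_{v+K}(e^\xi)$; the paper's route buys brevity at the cost of leaving the ``easily adapted'' steps to the reader. One small point you should make explicit: on lower-dimensional strata of the chamber complex, $\Pi_X(\yy)$ may fail to be simple even though $P$ is, so a vertex can lie on more than $m$ facets; one must then triangulate the tangent cones into simplicial (and, in the unimodular case, unimodular) cones before applying the Brion-type analysis. This is routine but worth stating, since your vertex parametrization $v_s(\yy)=X_{I_s}^{-1}\yy_{I_s}$ as written presupposes simplicity. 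For $g^o$, your inclusion-exclusion fallback matches the paper's approach exactly, so either of your two suggested treatments of the open case is fine.
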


\begin{proof}
This is certainly known for $f=1$, \emph{i.e.}, for the lattice point count $g(\yy)=|\Pi_X(\yy) \cap\,  \ZZ^m|$. For instance, a proof can be found in \cite[Theorem 19.3]{P05} for the parameters $\yy$ for which $\Pi_X(\yy)$ is integral. This proves the unimodular case, and is easily adapted to the non-unimodular case. That proof is easily modified to apply to any polynomial $f$. By subtracting off the boundary faces of our polytope (with alternating signs depending on the dimension) we obtain the results for $g^o$.
\end{proof}

\begin{lemma}\label{lemma:discreteintegral2}
 Consider a variable polytope with fixed facet directions, and facet parameters which vary linearly as a function of a vector $\dd$; \emph{i.e.}, 
\[
\Pi_X(Y\dd) = \{\kk \in \RR^m \, : \, X\kk \leq Y\dd\}
\]
where $X \in \ZZ^{l \times m}$ and $Y \in \ZZ^{l \times n}$ are fixed $l \times m$ and $l \times n$ matrices, and $\dd \in \RR^n$ is a variable vector.
Let $f(\cc, \dd, \kk)$ be a polynomial function of $\cc \in \RR^n$, $\dd \in \RR^n$, and $\kk \in \RR^m$, and let 
\[
g(\cc,\dd) = \sum_{\kk \, \in \, \Pi_X(\yy) \cap\,  \ZZ^m} f(\cc,\dd,\kk), \qquad
g^o(\cc,\dd) = \sum_{\kk \, \in \, \Pi_X(\yy)^o \cap\,  \ZZ^m} f(\cc,\dd,\kk).
\]
Then $g(\cc,\dd)$ and $g^o(\cc,\dd)$ are piecewise polynomial functions of $\cc$ and $\dd$. The domains of quasipolynomiality are given by linear conditions in $\dd$. More concretely, these functions are quasipolynomial when restricted to those $\dd$ for which the polytope $\Pi_X(Y\dd)$ has a fixed combinatorial type. \\
Furthermore, if $X$ is unimodular, then $g(\cc,\dd)$ and $g^o(\cc,\dd)$ are piecewise polynomial.
\end{lemma}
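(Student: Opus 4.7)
The plan is to reduce this statement to Lemma \ref{lemma:discreteintegral1} by exploiting linearity. First, I would write the polynomial $f(\cc,\dd,\kk)$ as a finite sum of monomials
\[
f(\cc,\dd,\kk) = \sum_\alpha q_\alpha(\cc,\dd)\, \kk^\alpha,
\]
where each $q_\alpha$ is a polynomial in $(\cc,\dd)$. For fixed $(\cc,\dd)$ the inner sum over $\kk$ decomposes as
\[
g(\cc,\dd) = \sum_\alpha q_\alpha(\cc,\dd) \sum_{\kk \in \Pi_X(Y\dd) \cap \ZZ^m} \kk^\alpha,
\]
and similarly for $g^o$. Since $\cc$ does not enter the domain of summation at all, the $\cc$-dependence is automatically polynomial. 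So the problem reduces to showing that, for each fixed monomial $\kk^\alpha$, the function
\[
h_\alpha(\dd) := \sum_{\kk \in \Pi_X(Y\dd) \cap \ZZ^m} \kk^\alpha
\]
and its analog over $\Pi_X(Y\dd)^o$ are piecewise quasipolynomial in $\dd$, and piecewise polynomial when $X$ is unimodular.

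Next I would apply Lemma \ref{lemma:discreteintegral1} with the polynomial $f = \kk^\alpha$ to the polytopes $\Pi_X(\yy)$, obtaining a piecewise quasipolynomial function $\tilde h_\alpha(\yy)$ with domains of quasipolynomiality cut out by linear conditions on $\yy \in \RR^l$ (corresponding to the combinatorial type of $\Pi_X(\yy)$). Composing with the linear substitution $\yy = Y\dd$ yields $h_\alpha(\dd) = \tilde h_\alpha(Y\dd)$, which is piecewise quasipolynomial in $\dd$ with chambers given by the pullback of the original linear conditions under $Y$; equivalently, the chambers are the $\dd$ for which $\Pi_X(Y\dd)$ has a fixed combinatorial type. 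The unimodularity of $X$ is preserved in this reduction, so in the unimodular case each $\tilde h_\alpha$ is piecewise polynomial in $\yy$, hence $h_\alpha(\dd)$ is piecewise polynomial in $\dd$.

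Finally, summing the contributions $q_\alpha(\cc,\dd)\, h_\alpha(\dd)$ over the finite index set of monomials $\alpha$ assembles $g(\cc,\dd)$ (respectively $g^o(\cc,\dd)$) as a finite sum of products of polynomials in $(\cc,\dd)$ with piecewise (quasi)polynomials in $\dd$, whose chambers can be chosen uniformly (e.g.\ by taking a common refinement). The result is piecewise (quasi)polynomial in $(\cc,\dd)$ on the pulled-back chambers, proving the claim.

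The only place where anything nontrivial happens is the invocation of Lemma \ref{lemma:discreteintegral1}, so I do not expect a real obstacle here; the one point that needs care is verifying that the pullback under $Y$ of the chambers of (quasi)polynomiality for $\tilde h_\alpha$ agrees with the chambers described in the statement (those $\dd$ making $\Pi_X(Y\dd)$ of fixed combinatorial type). This is immediate from the fact that the chambers in Lemma \ref{lemma:discreteintegral1} are exactly the combinatorial-type chambers in $\yy$-space, so pulling back under the linear map $\dd \mapsto Y\dd$ gives the combinatorial-type chambers in $\dd$-space.
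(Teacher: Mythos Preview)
Your proposal is correct and essentially the same as the paper's proof: both reduce to Lemma~\ref{lemma:discreteintegral1} via linearity and the linear substitution $\yy = Y\dd$. The only cosmetic difference is that the paper expands $f$ in monomials of $\dd$ (writing $f(\cc,\dd,\kk)=\sum_{\ii} f_{\ii}(\cc,\kk)\,\dd^{\ii}$) whereas you expand in monomials of $\kk$; either decomposition works and the argument is otherwise identical.
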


\begin{proof}
This is an easy consequence of the previous lemma. 
Write $f(\cc, \dd, \kk) = \sum_\ii f_\ii(\cc, \kk)\dd^\ii$. By Lemma \ref{lemma:discreteintegral1},  $\sum_{\kk \, \in \, \Pi_X(\yy) \cap\,  \ZZ^m} f_\ii(\cc,\kk)$ is a piecewise polynomial in $Y\dd$, and therefore in $\dd$, with polynomials in $\cc$ as coefficients. The domains of polynomiality are given by linear conditions in $Y\dd$, and hence in $\dd$. Now sum over all $\ii$ to obtain the desired result.
\end{proof}

\subsection{Polynomiality of Severi degrees.}\label{sec:polySeveri}

We are now ready to prove the eventual polynomiality of Severi degrees in the special case of first-quadrant polygons.

\begin{theorem} (Polynomiality of first-quadrant Severi degrees 1: Fixed Surface.)\label{thm:fixed}\\
\noindent Fix $n \ge 1$, $\delta \ge 1$, and $\cc=(c_1> \cdots > c_n) \in \ZZ^n$. There is a polynomial
$p^\cc_\delta(\dd)$ such that the Severi degree $N_{\Tor(\cc)}^{\dd, \delta}$ is
  given by
\begin{equation}
\label{eqn:polynomiality}
N_{\Tor(\cc)}^{\dd, \delta} = p^\cc_\delta(\dd)
\end{equation}
for any $\dd \in
\ZZ_{\ge 0}^{n+1}$ such that $d_0 \geq \delta$, $d_0+c_1 \geq 2\delta$ and $d_1, \ldots, d_n \geq \delta+1$.
\label{thm:maintheorem}

\end{theorem}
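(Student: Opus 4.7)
The plan is to synthesize the combinatorial and polytopal results already established in the previous sections. Starting from formula (\ref{eq:Severi3}), which writes
\[
N^{\Delta,\delta} = \sum_{\pi, \GGamma} \left( \prod_{i=1}^m \mu(\Gamma_i) \right) \sum_{\kk \in A(\GGamma,\aa) \cap \ZZ^m} P_{\GGamma}(\aa,\kk),
\]
with the outer sum finite and indexed independently of $\dd$ (since $\cc$ is fixed and the relevant swap sequences $\pi$ are those with $\sum_i \inv(\pi_i)(c_i - c_{i+1}) \le \delta$), my goal is to show that each inner summand, for fixed $\pi$ and $\GGamma$, is a polynomial in $\dd$ on the specified domain. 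Once that is done, summing finitely many polynomials gives the required $p^\cc_\delta(\dd)$.

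Next, I would combine Proposition~\ref{prop:polytopesandpolynomials} with Proposition~\ref{prop:polytopesandpolynomials2} to refine the inner summand into the form (\ref{eq:Severi4}): subdividing by the hyperplane arrangement $\A$, the inner sum becomes a finite sum
\[
N^{\Delta,\delta}_{\pi, \GGamma} = \sum_F \sum_{\kk \in (A(\GGamma,\dd) \cap F)^o \cap \ZZ^m} P^F_{\GGamma}(\cc,\dd,\kk),
\]
where $F$ ranges over faces of $\A$, each $P^F_\GGamma$ is a polynomial in $\kk$ and $\dd$ (with $\cc$ fixed), and $A(\GGamma,\dd) \cap F$ is an integer facet translation of a fixed polytope whose facet parameters are linear in $\dd$. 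I would then invoke Lemma~\ref{lemma:discreteintegral2} on each term: since the defining inequalities of $A(\GGamma,\aa)$ involve only differences $k_{i+1}-k_i$, single coordinates $k_1, k_m$, and (from $\A$) equations $k_s = d_1 + \cdots + d_i + f$, the relevant constraint matrix $X$ is unimodular, so each $\sum_{\kk} P^F_\GGamma(\cc,\dd,\kk)$ is a piecewise polynomial in $\dd$ with domains of polynomiality cut out by linear inequalities in $\dd$.

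The main obstacle, and the step requiring the most care, is to verify that the threshold hypotheses $d_0 \ge \delta$, $d_0 + c_1 \ge 2\delta$, $d_i \ge \delta+1$ are strong enough that for every fixed $(\pi, \GGamma, F)$ the vector $\dd$ lies in a single chamber of polynomiality — so that ``piecewise polynomial'' collapses to ``polynomial'' throughout the domain of Theorem~\ref{thm:fixed}. The hyperplane arrangements in play come from two sources: (i) the combinatorial-type chambers of $A(\GGamma,\dd) \cap F$, whose walls are linear in $\dd$ with coefficients bounded in terms of template data (the $l(\Gamma_i)$, $\varkappa_j(\Gamma)$, $\eps_0, \eps_1$), all bounded by functions of $\delta$; and (ii) the walls of $\A$, which have the form $k_s = d_1+\cdots+d_i + f$ with $|f|$ bounded in terms of $\delta$. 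Concretely, I would show that whenever every $d_i$ exceeds the relevant $\delta$-threshold, the polytope $A(\GGamma,\dd) \cap F$ is either empty or combinatorially fixed, and that $\dd$ lies in a fixed chamber of every relevant arrangement; the proof in Proposition~\ref{prop:polytopesandpolynomials} already carries out the analogous check for the non-linear facet inequalities and serves as the model.

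Finally, once polynomiality of each $N^{\Delta,\delta}_{\pi,\GGamma}$ in $\dd$ on the large-$\dd$ region is established, I would sum over the finite index set of $(\pi, \GGamma)$ from (\ref{eq:Severi3}), weight by $\prod_i \mu(\Gamma_i)$, and define $p^\cc_\delta(\dd)$ to be the resulting polynomial. Since every step is constructive, this also proves that $p^\cc_\delta$ is combinatorially computable, as claimed. The entire argument is essentially bookkeeping on top of the structural results already proved; the delicate point is the threshold verification in the paragraph above.
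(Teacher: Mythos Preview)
Your proposal is correct and follows essentially the same route as the paper's proof: the paper likewise organizes the argument into (i) piecewise quasipolynomiality via Lemma~\ref{lemma:discreteintegral2} applied to the finitely many summands of (\ref{eq:Severi4}), (ii) a threshold check (the ``anchoring'' analysis) showing that the combinatorial type of each $A(\GGamma,\dd)\cap F$ is independent of $\dd$ once $d_i\ge\delta+1$, and (iii) unimodularity of the constraint matrix (rows of the form $\ee_i$ or $\ee_i-\ee_j$) to upgrade quasipolynomial to polynomial. The only thing to flag is that the model for step (ii) is not really Proposition~\ref{prop:polytopesandpolynomials} (which disposes of the non-linear inequalities) but a separate argument using the bounds $l(\pi)\le\inv(\pi)+1$ and $l(\Gamma)\le\delta(\Gamma)+1$ to compare the anchor windows against the gaps $d_i$; you have correctly identified this as the crux, so the plan is sound.
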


\begin{proof}
We do this in three steps. 

\medskip

\noindent\emph{Step 1. Piecewise quasipolynomiality}. 
In (\ref{eq:Severi4})
there is a fixed (and finite) set of choices for $\pi, \GGamma$, and $F$, independently of $\dd$. For each such choice, the function $P^F_{\GGamma}(\cc, \dd, \kk)$ is polynomial in $\cc, \dd$, and $\kk$ (thanks to Section \ref{sec:PP}) and the domain $A(\GGamma, \dd) \cap F$ is polytopal with fixed facet directions and facet parameters which are linear in $\dd$ (thanks to  Section \ref{sec:polytopality}). Lemma \ref{lemma:discreteintegral2} then shows that $N^{\Delta, \delta}$ is piecewise quasipolynomial in $\cc$ (which is constant here) and $\dd$.

\medskip

\noindent\emph{Step 2. Quasipolynomiality.}
To prove that all large $\dd$ lie in the same domain of quasipolynomiality, we need to analyze those domains more carefully. Each polytope $A(\GGamma, \dd) \cap F$ is the space of $(k_1, \ldots, k_m)$ such that
\[
\begin{array}{rcll}
k_s &\bigcirc & d_1 + \cdots + d_i + f  \quad & (-\beta_i - l_s + 1 \leq f \leq \alpha_i, \,\, 1 \leq i \leq n, \,\,1\leq s \leq m) \\
k_1 &\ge&  1 - \eps_0 & \\
 k_s - k_{s-1} &\ge&  l_{s-1} &(2 \leq s \leq m) \\
 k_m &\le & M - l_m + \eps_1, &
\end{array}
\]
where $\bigcirc$ represents $\geq, =$, or $\leq$, and we abbreviate $l_i:= l(\Gamma_i), \eps_0:= \eps_0(\Gamma_1)$ and $\eps_1:= \eps_1(\Gamma_m)$. We need to show that the combinatorial type of this polytope does not depend on $\dd$.

Let's examine how the parameters in $\dd$ restrict the positions of the integers in $\kk$ when $\dd$ is large. The numbers $d_1, d_1+d_2, \ldots, d_1+\cdots+d_n=M$ are far from each other. The first set of inequalities ``anchor" some of the $k_s$s  to be very near the number $d_1+\cdots+d_i$. If $k_s$ is anchored near $d_1+\cdots+d_i$, then it is forced to equal $d_1+\cdots+d_i+f$, for some $f \in [-\beta_i-l_s+1, \alpha_i]$ which is determined by $F$ independently of $\dd$. If $k_s$ is not anchored to any $d_1+\cdots+d_i$, then those inequalities allow it to roam freely inside one concrete large interval $[d_1+\cdots+d_i, d_1+\cdots+d_{i+1}]$, but not too close to either endpoint of the interval.

Since $\dd$ is large, the inequality $k_s-k_{s-1} \geq l_{s-1}$ is automatically satisfied by $\kk$ unless one of three things happen:
\begin{itemize}
\item
$k_{s-1}$ and $k_s$ are anchored to the same $d_1 + \ldots + d_i$,
\item
neither is anchored, and both are restricted to lie in the same interval.
\item
one of them is anchored to $d_1 + \cdots + d_i$, and the other one is restricted to one of the intervals adjacent to the same $d_1 + \cdots + d_i$.
\end{itemize}
In the first case, either the inequality $k_s-k_{s-1} \geq l_{s-1}$ automatically holds (and does not define a facet of $A(\GGamma, \dd)$) or it automatically does not hold (and the polytope is empty), depending on how far $k_{s-1}$ and $k_s$ are anchored from $d_1+\cdots+d_i$. 
In the second case, the inequality does not hold automatically, and therefore defines a facet  of $A(\GGamma, \dd)$. 
In the third case, the inequality may hold automatically (and not give a facet) or introduce a new restriction on $\kk$ (and give a facet); but again, this depends only on the anchoring, and is independent of $\dd$. 
A similar analysis holds for the inequalities $k_1 \ge  1 - \eps_0$ and $ k_m \le M - l_m + \eps_1$.

In summary, for large $\dd$, the ``shape" of the restrictions on $\kk$ (\emph{i.e.} the combinatorial type of $A(\GGamma, \dd) \cap F$) is independent of $\dd$. This proves that $N^{\Delta,\delta}$ is quasipolynomial for large $\dd$.

Now we discuss the restrictions on $\dd$ necessary for the previous analysis to hold. First, we need it to be impossible for $k_s$ to be anchored to $d_1+\cdots+d_i$ and to $d_1+\cdots+d_i+d_{i+1}$ simultaneously. This translates to  $d_1+\cdots+d_{i+1}-\beta_{i+1}-l_s+1 > d_1+\cdots+d_i+\alpha_i$, or $d_{i+1} \geq l(\pi_{i+1}) + l_s$, for all $s$.
We also need that, if $k_{s-1}$ is anchored to $d_1+\cdots + d_i$ and $k_s$ is anchored to $d_1 + \cdots + d_{i+1}$, we automatically have $k_s-k_{s-1} \geq l_{s-1}.$
This requires the inequality $d_1+\cdots+d_{i+1}-\beta_{i+1}-l_s+1 \geq d_1+\cdots+d_i+\alpha_i+l_s$, or $d_{i+1} \geq l(\pi_{i+1}) + l_s+l_{s-1}-1$, which is stronger than the previous one. This last inequality follows from two easy observations: $l(\pi) \leq \inv(\pi)+1$ for any swap $\pi$, and $l(\Gamma) \leq \delta(\Gamma)+1$ for all templates $\Gamma$. From these, and the assumption that $\dd$ is large, we get 
\[
d_{i+1} \geq \delta+1 = \delta(\rr)+ \delta(\Gamma)+1 \geq \inv(\pi) + l(\Gamma) \geq  l(\pi) + l_s+l_{s-1} -1
\] 
as desired.

\medskip

\noindent\emph{Step 3. Polynomiality.}
Finally, to prove polynomiality, we prove that the polytopes $A(\GGamma, \dd) \cap F$ are facet-unimodular. This is easy since the rows of the matrix describing this polytope are of the form $\ee_i$ or $\ee_i-\ee_j$, where $\ee_i$ is the $i$th unit vector. This is a submatrix of the matrix of the root system $A_m=\{\ee_i - \ee_j, \, 1 \leq i \neq j \leq m+1\}$, which is totally unimodular; \emph{i.e.}, all of its square submatrices have determinant $-1, 0$, or $1$. \cite{S82}
\end{proof}

\begin{theorem} (Polynomiality of first-quadrant Severi degrees 2: Universality.)\label{thm:universal}\\
\noindent Fix $n \ge 1$ and $\delta \ge 1$. There is a universal polynomial
$p_\delta(\cc,\dd)$ such that the Severi degree $N_{\Tor(\cc)}^{\dd, \delta}$ is
  given by
\begin{equation}
\label{eqn:polynomiality}
N_{\Tor(\cc)}^{\dd, \delta} = p_\delta(\cc,\dd).
\end{equation}
for any $\cc=(c_1 > \cdots > c_n) \in \ZZ^n$ and $\dd  \in
\ZZ_{\ge 0}^{n+1}$ such that $c_i - c_{i+1} \geq \delta+1$, $d_i \geq
\delta+1$ for all $i$, $d_0 \geq \delta$ and $d_0+c_1 \geq 2\delta$.
\label{thm:maintheoremuniversal}
\end{theorem}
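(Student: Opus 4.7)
My plan is to reduce this result to the proof of Theorem~\ref{thm:fixed} by exploiting the new hypothesis $c_i - c_{i+1} \geq \delta + 1$ to collapse the outer sum in (\ref{eq:Severi3}).

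The first, and key, observation is that this assumption rules out all nontrivial swap sequences. Recall that $\delta(\rr) = \sum_{i=1}^{n-1} \inv(\pi_i)(c_i - c_{i+1})$. If any $\pi_i$ is nonempty then $\inv(\pi_i) \geq 1$, so $\delta(\rr) \geq c_i - c_{i+1} \geq \delta + 1 > \delta$. Hence the only swap sequence contributing to (\ref{eq:Severi3}) is $\pi = (\varnothing, \ldots, \varnothing)$, for which $\rr = (\underbrace{c_1, \ldots, c_1}_{d_1}, \ldots, \underbrace{c_n, \ldots, c_n}_{d_n})$ and $\delta(\rr)=0$. Consequently (\ref{eq:Severi3}) simplifies to
\[
N^{\Delta, \delta} \;=\; \sum_{\GGamma : \delta(\GGamma) = \delta} \left(\prod_{i=1}^m \mu(\Gamma_i)\right) \sum_{\kk \in A(\GGamma, \aa) \cap \ZZ^m} P_{\GGamma}(\aa, \kk),
\]
where now $\aa = (d_0, \rr)$ is a fixed linear function of $\cc$ and $\dd$, and the outer index set is finite and independent of both $\cc$ and $\dd$.

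The second step is to reapply the machinery of Sections~\ref{sec:polytopality}--\ref{sec:discint}, treating both $\cc$ and $\dd$ as variables. Proposition~\ref{prop:polytopesandpolynomials} still gives that $A(\GGamma,\aa)$ is the set of lattice points in a polytope with fixed facet directions and facet parameters linear in $\dd$ alone; the hypotheses $d_0 \geq \delta$ and $d_0+c_1 \geq 2\delta$ are precisely what makes the $\cc$-dependent inequalities trivially satisfied, exactly as in the original proof. Proposition~\ref{prop:polytopesandpolynomials2} and Lemma~\ref{lem:pwpoly} continue to show that $P_{\GGamma}(\aa, \kk)$ is piecewise polynomial in $\cc$, $\dd$, $\kk$ on faces of a hyperplane arrangement whose equations depend only on $\dd$ and $\kk$ (not on $\cc$). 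Substituting into Lemma~\ref{lemma:discreteintegral2}, with $\cc$ now among the polynomial variables of the integrand, yields a decomposition
\[
N^{\Delta,\delta} \;=\; \sum_{\GGamma, F} \sum_{\kk \in (A(\GGamma,\dd) \cap F)^o \cap \ZZ^m} P^F_{\GGamma}(\cc,\dd,\kk),
\]
which is piecewise polynomial in $\cc$ and $\dd$.

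Finally I would run through Steps 2 and 3 of the proof of Theorem~\ref{thm:fixed} verbatim: the combinatorial-type analysis of $A(\GGamma,\dd) \cap F$ for large $\dd$ shows that all sufficiently large and ``spread out'' parameters lie in a single domain of polynomiality, since the relevant anchoring conditions and adjacency inequalities depend only on $\dd$; and the facet-unimodularity argument (via the totally unimodular root system $A_m$) upgrades quasipolynomiality to honest polynomiality. Because the full decomposition above is now independent of $\cc$ in its combinatorial structure, the resulting polynomial $p_\delta(\cc,\dd)$ is genuinely universal. The only mild subtlety, and the one place I would be most careful, is checking that the required lower bounds on differences $d_{i+1} \geq l(\pi_{i+1}) + l_s + l_{s-1} - 1$ from Theorem~\ref{thm:fixed} continue to hold: with $\pi$ trivial this reduces to $d_{i+1} \geq l(\Gamma) + l_{s-1}$, which follows immediately from $d_{i+1} \geq \delta+1 \geq \delta(\GGamma) + 1 \geq l(\Gamma_s) + l(\Gamma_{s-1}) - 1$.
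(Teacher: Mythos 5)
Your proof is correct and takes essentially the same approach as the paper: the key observation that $c_i - c_{i+1} > \delta$ forces all swaps in $\pi$ to be empty (so $\delta(\rr)=0$ and the outer sum in (\ref{eq:Severi3}) collapses) is precisely the paper's argument, after which both rely on the machinery already established for Theorem~\ref{thm:fixed}. The only differences are cosmetic: you spell out the re-verification of Propositions~\ref{prop:polytopesandpolynomials}--\ref{prop:polytopesandpolynomials2} and the $d_{i+1}$-bound with $\pi$ trivial, where the paper simply says the result ``follows immediately''; note a small slip at the end where you write $d_{i+1} \geq l(\Gamma) + l_{s-1}$ when the reduced inequality should read $d_{i+1} \geq l_s + l_{s-1} - 1$, though your subsequent chain $d_{i+1} \geq \delta+1 \geq \delta(\GGamma)+1 \geq l(\Gamma_s)+l(\Gamma_{s-1})-1$ is the right one.
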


\begin{proof}
We have already done all the hard work, and this result follows immediately from the discussion at the end of Section \ref{sec:divtoswaps}. If $c_i-c_{i+1} > \delta$ for all $i$ then $\delta(\rr) > \delta$ for any $\pi$ other than the trivial collection of empty swaps. Therefore, in this case (\ref{eq:Severi3}) says 
\[
N^{\Delta, \delta} = \sum_{\GGamma: \, \, \delta(\GGamma) = \delta} \quad \left( \prod_{i = 1}^m
\mu(\Gamma_i)  \sum_{\kk \in A(\GGamma, \aa) \cap \ZZ^m} 
P_{\GGamma}(\aa, \kk) \right)
\]
The indexing set for this sum no longer depends on $\cc$, so this is simply a weighted sum of functions which are polynomial in $\cc$ and $\dd$ when $\dd$ is large. The desired result follows.
\end{proof}
%
%
%
%

%

\begin{remark}
\label{rmk:algorithm}
This description gives, in principle, an explicit
algorithm to compute the polynomial $p_\delta(\cc, \dd)$. In Section 3
of \cite{FB}, the second author describes an algorithm which generates all templates of a given cogenus. The discrete integral 
\[
\sum_{\kk \in A(\GGamma, \aa) \cap \ZZ^m} P_{\GGamma}(\aa, \kk)
\]
can be evaluated symbolically by repeated application of Faulhaber's
formula (\cite[Lemma~3.5]{FB}, taken from~\cite{Kn93}).
\end{remark}

\section{Polynomiality of Severi degrees: the general $h$-transverse case} \label{sec:polynomialitygeneral}

We are now ready to prove our main results, Theorems \ref{mainthm:fixed} and \ref{mainthm:universal}, which assert the eventual polynomiality of the Severi degrees $N_{\Tor(\cc)}^{\dd, \delta}$ for arbitrary $h$-transverse polygons.
We simply adapt the proofs of Theorems \ref{thm:fixed} and \ref{thm:universal}  for first-quadrant polygons. The adaptation is fairly straightforward, though the details are slightly more cumbersome. 

\begin{remark}
In this section we assume that the $h$-transverse polygon $\Delta(\cc,\dd)$ with $\cc=(\cc^r;  \cc^l)$ and $\dd=(d^{\,t}; \dd^r; \dd^l)$ satisfies:
\[
d^{\,t}, d^{\,b} \geq \delta, \quad d^{\,t}+c^r_1-c^l_1, d^{\,b}+c^r_n-c^l_m \geq 2\delta, \quad d^{\,r}_1, \ldots, d^{\,r}_n, d^{\,l}_1, \ldots, d^{\,l}_m \geq \delta+1, 
\]
and
\[
\left|(d^{\,r}_1+\ldots+d^{\,r}_i) - 
(d^{\,l}_1+\ldots+d^{\,l}_j)\right| \geq \delta+2 \quad \textrm{ for } 1 \leq i \leq n-1, \,\, 1 \leq j \leq m-1
\]
\end{remark}

\begin{proof}[Proof of Theorem \ref{mainthm:fixed}]
We follow the steps of Section \ref{sec:polynomialityspecial} one at a time.

\noindent \emph{1. Swaps.}
The encoding of divergence sequences in terms of swaps still works, since $\dd^r$ and $\dd^l$ are large enough. Now we obtain two swap sequences $\pi^r$ and $\pi^l$ for $\rr$ and~$\ll$, respectively. Here $\aa=(d^{\,t}, \rr-\ll)$ where $\rr-\ll = \pi^r(\cc^r,\dd^r) - \pi^l(\cc^l,\dd^l)$, and the expression $a_0+\ldots+a_t$ is still piecewise polynomial in $\cc, \dd$ and $t$. The regions of polynomiality are given by how far $t$ is from the numbers $d^{\,r}_1+\cdots+d^{\,r}_i$ (as before) and $d^{\,l}_1+\cdots+d^{\,l}_j$.
\medskip

\noindent \emph{2. Polytopality.}
The domain $A(\GGamma, \aa)$ of possible template starting points is
still polytopal. To see this, once again, we prove that the
potentially non-linear inequalities $a_0+\cdots+a_{{k_i}+j-1} \geq
\varkappa_j(\Gamma_i)$ hold automatically, by proving that: 

$\bullet$ $a_0+\cdots+a_m \geq 2\delta(\GGamma)$, for $1 \le m \le M - 1$ (and recalling that $2\delta(\GGamma) \geq \varkappa_j(\Gamma_i$)),

$\bullet$ $a_0 \geq \delta(\GGamma)$ 
(which is needed if $k_1 = 0$ and $\eps_0(\Gamma_1) = 1$)

$\bullet$ $a_0 + \cdots + a_M \geq \delta(\GGamma)$ (which is needed if $k_m = M - l(\Gamma_m) + 1$ and $\eps_1(\Gamma_m) = 1$).

%
\noindent We need a different argument now since $\aa$ is no longer non-negative.

Let $(\alpha_0, \ldots, \alpha_M)$ be the divergence sequence for $\Delta(\cc,\dd)$ corresponding to the ``natural" orders of $D_l$ and $D_r$: weakly decreasing for $D_r$ and weakly increasing for $D_l$. The sequence of partial sums $\alpha_0+\cdots+\alpha_m$ is unimodal. Therefore, for $1 \leq m \leq M-1$,
\begin{eqnarray*}
\alpha_0+\cdots+\alpha_m &\geq& \min\{\alpha_0+\alpha_1, \alpha_0+\cdots+\alpha_{M-1}\} \\
&=& \min \{d^{\,t}+c^r_1-c^l_1, d^{\,b}+c^r_n-c^l_m\}
\geq 2\delta.
\end{eqnarray*}
Now observe that the difference
$(\alpha_0+\cdots+\alpha_m)-(a_0+\cdots+a_m)$ is naturally a sum of
terms $(r_j-r_i)$ (where $(i,j)$ is a reversal of $\rr$) and
$(l_i-l_j)$ (where $(i,j)$ is a reversal of $-\ll$). Therefore
$(\alpha_0+\cdots+\alpha_m)-(a_0+\cdots+a_m) \leq \delta(\ll,\rr)$. It
follows that, for $1 \le m \le M-1$,
\[
a_0+\cdots+a_m \geq (\alpha_0+\cdots+\alpha_m)-\delta(\ll,\rr) \geq 2\delta - \delta(\ll,\rr) = \delta+\delta(\GGamma) \geq 2\delta(\GGamma).
\]
proving the first series of inequalities.

The second and third inequality follow from our assumptions since $a_0 = d^{\, t}$ and   $a_0 + \cdots + a_M = d^{\,b}$.
%

\medskip
\noindent \emph{3. Piecewise polynomiality.}
The results of Section \ref{sec:discint} hold in exactly the same way for $\aa = \rr - \ll = \pi^r(\cc^r, \dd^r) - \pi^l(\cc^l, \dd^l)$. The only difference is that, as in Step 1 above, the domains of polynomiality now are given by how far $k_s$ is from  the numbers $d^{\,r}_1+\cdots+d^{\,r}_t$ (as before) and $d^{\,l}_1+\cdots+d^{\,l}_t$.

\medskip
\noindent \emph{4. Discrete integrals over polytopes.} Section \ref{sec:discint} holds without any changes.

\medskip
\noindent \emph{5. Polynomiality of Severi degrees.}
To prove Theorem \ref{thm:fixed} for general $h$-transversal polygons, the only adjustment we have to make is in the argument for quasipolynomiality (Step 2 of that proof). In this context, the $k_s$s can be anchored near the numbers $d^{\,r}_1+\cdots+d^{\,r}_t$ and $d^{\,l}_1+\cdots+d^{\,l}_t$, and we have to ensure that these anchor points are sufficiently far from each other. The exact same argument works if we assume that $|(d^{\,r}_1+\ldots+d^{\,r}_i) - (d^{\,l}_1+\ldots+d^{\,l}_j)| \geq \delta+2$. We now need to impose a bound of $\delta +2$ instead of $\delta +1$ because we apply the inequality $l(\pi) \leq \textrm{inv}(\pi)+1$ twice: for $\pi=\pi^r$ and $\pi = \pi^l$.
\end{proof}

\begin{proof}[Proof of Theorem \ref{mainthm:universal}]
In Step 1 above, notice that if $\cc$ is such that $c^r_i-c^r_{i+1} > \delta$ and $c^r_j-c^r_{j+1} > \delta$, then $\pi^r$ and $\pi^l$ must be empty. Therefore the proof of Theorem \ref{thm:universal} applies here as well. 
\end{proof}

\begin{remark}
\label{rmk:fewerhorizontaledges}
So far, our polynomiality results on Severi degrees 
are stated only for toric surfaces arising from
polygons with two sufficiently long horizontal edges, due to  the
assumptions $d^{\,t} \ge \delta$ and $d^{\,b} \ge \delta$. In particular,
this excludes the surface $\PP^2$.

By a slight modification of our argument one can show that there exist
universal polynomials for the families of Severi varieties of toric
surfaces associated to lattice
polygons with only one or no horizontal edge. More precisely, by
setting one or both of the numbers $d^{\,t}$ and $d^{\,b}$ to $0$, one can show a
universal polynomiality theorem analogous to Theorem~\ref{mainthm:universal},
with the conditions $d^{\,t} \ge \delta$ and/or $d^{\,b} \ge \delta$ removed when
appropriate. A proof of this variation can be obtained from our argument
by, in essence, disregarding the terms $\eps_0(\Gamma_1)$ and/or
$\eps_1(\Gamma_m)$ in the definition of the space $A(\GGamma, \aa)$ of possible
locations of the templates in a collection $\GGamma$.

A priori, the universal polynomials in these alternate settings are
different from the polynomials $p_\delta(\cc, \dd)$ of
Theorem~\ref{mainthm:universal}. However, we expect that they 
should be closely related; their relationship should be further clarified.
\end{remark}

\section{Explicit computations}
\label{sec:computations}

\subsection{Hirzebruch Surfaces}

Our results specialize as follows: For a non-negative integer $m$ let $F_m$ be the Hirzebruch surface
associated to the convex hull of $(0,0)$, $(0,1)$, $(1,1)$
and $(m+1, 0)$. In particular, $F_0 = \PP^1 \times \PP^1$. Let 
$N^{(F_m, \L_m(a,b)), \delta}$
be the degree of the
Severi variety of $\delta$-nodal curves in $F_m$ with bi-degree $(a,
b)$, i.e., of $\delta$-nodal curves whose Newton polygon is the convex hull of the
points $(0,0)$, $(0,b)$, $(a, b)$ and $(a+ bm, 0)$.

\begin{corollary} (Polynomiality of Severi degrees for Hirzebruch Surfaces.)
\noindent For fixed \newline $\delta \ge 0$, there exists a universal polynomial $p_\delta(a, b, m)$ such that the Severi degrees of Hirzebruch surfaces are given by
\begin{displaymath}
N^{(F_m, \L_m(a,b)), \delta} = p_\delta(a, b, m)
\end{displaymath}
for all positive integers $a, b, m$ with $a \geq \delta$, $a + m \ge 2 \delta $ and $b \ge \delta+1 $.

\label{cor:hirzebruch}
\end{corollary}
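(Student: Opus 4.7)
The plan is to deduce this as a direct specialization of Theorem~\ref{mainthm:universal}. First I would translate the geometric data into Notation~\ref{not:polygon}. The Newton polygon associated to a bi-degree~$(a,b)$ curve on $F_m$, namely $\conv\{(0,0), (0,b), (a,b), (a+bm,0)\}$, is $h$-transverse: it has a horizontal top edge of lattice length $d^{\,t} = a$, a horizontal bottom edge of lattice length $d^{\,b} = a+bm$, a vertical left edge of lattice length $b$ (encoded by $\cc^l = (0)$, $\dd^l = (b)$), and a single right edge of direction $(m,-1)$ and lattice length $b$ (encoded by $\cc^r = (m)$, $\dd^r = (b)$). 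Thus we are in the setting of Theorem~\ref{mainthm:universal} with exactly one right edge and one left edge, taking $\cc = ((m);(0))$ and $\dd = (a;(b);(b))$.

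Next I would verify that the five hypotheses of Theorem~\ref{mainthm:universal} are implied by the three hypotheses $a \geq \delta$, $a + m \geq 2\delta$, $b \geq \delta+1$ of the corollary. The inequalities $d^{\,t} \geq \delta$ and $d^{\,t} + c^r_1 - c^l_1 \geq 2\delta$ translate to the first two hypotheses verbatim, while $d^{\,b} \geq \delta$ and $d^{\,b}+c^r_n - c^l_m \geq 2\delta$ become $a+bm \geq \delta$ and $a+bm+m \geq 2\delta$, both immediate consequences of the assumptions. The bound $d^{\,r}_1, d^{\,l}_1 \geq \delta+1$ reads $b \geq \delta+1$. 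The remaining two spread-out conditions---on partial-sum differences and on gaps among the $c^r_i$ and $c^l_j$---are vacuous, since their index ranges require $1 \leq i \leq n-1 = 0$ and $1 \leq j \leq m-1 = 0$, which are empty.

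Theorem~\ref{mainthm:universal} then supplies a universal polynomial $p_\delta(\cc,\dd)$ for which $N_{\Tor(\cc)}^{\dd,\delta} = p_\delta(\cc,\dd)$ in this regime. Since $\Tor((m);(0))$ with its ample line bundle $\L_{((m);(0))}(a;(b);(b))$ is precisely $F_m$ with $\L_m(a,b)$, setting $q_\delta(a,b,m) := p_\delta((m;0),(a;b;b))$ produces a polynomial in the three integer variables equal to $N^{(F_m,\L_m(a,b)),\delta}$ under the stated conditions. Since this is a straightforward specialization there is no serious obstacle; the only bookkeeping point is to track the clash of notation between the Hirzebruch parameter $m$ and the symbol $m$ used in Theorem~\ref{mainthm:universal} for the number of left edges (here equal to $1$), and to confirm that the hypotheses indeed collapse to precisely the three stated conditions, as done above.
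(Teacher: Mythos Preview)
Your proof is correct and essentially follows the same idea as the paper. Both arguments hinge on the observation that, since the Hirzebruch polygon has a single right edge and a single left edge ($n=1$ on each side), the ``spread-out'' conditions on $\cc$ are vacuous; the paper phrases this as ``there are no swaps'' and invokes the proof of the first-quadrant universality theorem (Theorem~\ref{thm:maintheoremuniversal}), whereas you apply the general Theorem~\ref{mainthm:universal} directly and note that the index ranges $1 \le i \le n-1$ and $1 \le j \le m-1$ are empty. The only cosmetic difference is which theorem is cited; your route via Theorem~\ref{mainthm:universal} is arguably cleaner since it is a black-box application rather than a re-running of a proof.
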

\begin{proof}
Following the proof of Theorem \ref{thm:maintheorem} we find that in this case there are no swaps. Therefore the proof of Theorem \ref{thm:maintheoremuniversal} applies to this case.
\end{proof}

\begin{remark}
\label{rmk:implicitHirzebruch}
The universal polynomials $p_\delta(a, b, m)$ for 
Hirzebruch surfaces $F_m$ for $\delta \le 3$ are: 
\begin{displaymath}
\tiny
\begin{split}
p_0(a,b,m) &=  1,\\
p_1(a,b,m) &=  3 b^2 m + 6 a b - 2 b m  - 4 a - 4 b + 4, \\
p_2(a,b,m) &= \frac{9}{2}  b^4 m^2
+ 18 a b^3 m  - 6  b^3 m^2 
+ 18 a^2 b^2  - 24 a b^2 m  - 12  b^3 m + 2 b^2 m^2
- 24 a^2 b - 24 a b^2 + 8 a b m  - b^2 m
+ 8 a^2 \\
&- 2  a b + 8 b^2   + \frac{23}{2} b m
+ 23 a  + 23 b 
- 30,\\
p_3(a,b,m) &=  \frac{9}{2}  b^6 m^3
 + 27 a b^5 m^2 - 9 b^5 m^3 
 + 54 a^2 b^4 m  - 54 a b^4 m^2 - 18  b^5 m^2   + 6  b^4 m^3 
+ 36 a^3 b^3  - 108 a^2  b^3 m - 72 a b^4 m \\
& + 36 a b^3 m^2  - 21 b^4 m^2 - \frac{4}{3} b^3 m^3 
 - 72 a^3 b^2  - 72 a^2 b^3 + 72 a^2 b^2 m  - 84 a b^3 m - 8 a  b^2
 m^2  + 24 b^4 m + \frac{137}{2} b^3 m^2 \\ 
& + 48 a^3 b  - 84 a^2 b^2  - 16 a^2 b m  + 48 a b^3 + 274 a b^2 m  + 137 b^3 m  - 31 b^2 m^2 
 - \frac{32}{3} a^3 + 274 a^2 b + 274 a b^2  - 124 a b m \\
&  - \frac{32}{3} b^3  - 68 b^2 m
 - 124 a^2  - 136 a b - 124 b^2 - \frac{374}{3} b m 
 - \frac{748}{3} a - \frac{748}{3} b
+ 452.
\end{split}
\end{displaymath}
Implicitly, 
for $0 \le \delta \le 5$,
the polynomials $p_\delta(a, b, m)$ 
 are given by
\[
\sum_{\delta \ge 0} p_\delta(a, b, m) x^\delta = \exp \Big(
\sum_{\delta \ge 1} q_\delta(a, b, m) x^\delta \Big),
\]
where
\begin{displaymath}
\begin{split}
q_1(a,b,m) &=3 (b^2 m + 2 a b ) - 2 ( b m  + 2 a + 2 b ) + 4, \\
q_2(a,b,m) &= \tfrac{1}{2}(- 42 (b^2 m +2 a b ) + 39 (b m + 2 a + 2 b ) - 76), \\
q_3(a,b,m) &= \tfrac{1}{3}(690 ( b^2 m + 2 a b ) - 788 ( b m + 2 a + 2 b ) + 1780), \\
q_4(a,b,m) &= \tfrac{1}{4}( - 12060 ( b^2 m  + 2 a b ) + 15945 ( b m + 2 a + 2 b ) - 41048), \\
q_5(a,b,m) &= \tfrac{1}{5}(217728 ( b^2 m + 2 a b ) - 321882 ( b m + 2 a + 2 b ) + 921864). \\
\end{split}
\end{displaymath}
These were computed by
a Maple implementation of the algorithm of 
Remark~\ref{rmk:algorithm}.
\end{remark}

\begin{remark}
\label{rmk:HirzebruchviaGottsche}
An alternative way to compute the polynomials $p_\delta(a, b, m)$ for
small~$\delta$ is to use the G\"ottsche-Yau-Zaslow
formula~\cite[Conjecture~2.4]{Go}, recently proved by
Tzeng~\cite{Tz10}. This formula states that there exist universal
power series $B_1(q)$ and $B_2(q)$  such that the Severi degrees
$N^{\delta}(S,\L)$ (i.e., the number of $\delta$-nodal curves in
$|\L|$ through an appropriate number of general points) of any smooth surface $S$ and sufficiently ample line bundle $\L$ are given by the generating function
\begin{equation}
\label{eqn:GYZ}
\sum_{\delta \geq 0}
N^{\delta}(S,\L)
(DG_2(\tau))^\delta=
\frac{(DG_2(\tau)/q)^{\chi(\L)}B_1(q)^{K_S^2}B_2(q)^{\L
    K_S}}{(\Delta(\tau)D^2G_2(\tau)/q^2)^{\chi({\mathcal O}_S)/2}},
\end{equation}
where $q=e^{2\pi i \tau}$, $G_2(\tau)=-\frac{1}{24}+ \sum_{n>0}\left(\sum_{d|n} d\right)q^n$ denotes the second Eisenstein series, $D=q\frac{d}{d\,q}$, $\Delta(\tau)=q\prod_{k>0}(1-q^k)^{24}$ is the Weierstrass $\Delta$-function,  and ${\mathcal O}_S$ is the structure sheaf of $S$. The formulas $\chi({\mathcal O}_S) = \frac1{12}(K_S^2+C_2(S))$ and $\chi(\L) = \frac12(\L^2-\L K_S) +  \frac1{12}(K_S^2+C_2(S))$ put everything in terms of the four numbers $\L^2, \L K_S, K_S^2$, and $C_2(S)$.
 
The formula above allows us to compute the
polynomials $q_\delta(a,b,m)$ from the Chern classes $C_2(T_{F_m})$, $C_1(\L_m(a,b))$,
$C_1(K_{F_m})$ for the Hirzebruch surface $F_m$ and the line bundle $\L_m(a,b)$
determined by $a$ and $b$, together with the coefficients of $B_1$ and
$B_2$ (if these are known). More specifically, the
first $t$ coefficients of $B_1$ and $B_2$ determine the polynomials
$q_\delta(a,b,m)$ for $\delta \le t$ (and vice versa) for any $t \ge 1$. The second
author rigorously established the first $14$ coefficients of $B_1$ and
$B_2$, by computing the node polynomials for $\PP^2$ for $\delta \le
14$. This extended work of Kleiman and Piene~\cite{KP} for $\delta
\le 8$ and confirmed the prediction of G\"ottsche~\cite{Go}. Using
this method, one can in principle compute the polynomials $p_\delta(a,
b, m)$ and $q_\delta(a,b,m)$ for $\delta \le 14$. We note, however,
that the methods of this paper to compute $p_\delta(a, b, m)$ are less
efficient than in the $\PP^2$ case~\cite{FB}. With the current computational limitations, we expect computability of $p_\delta(a, b, m)$ in feasible time only for $\delta
\le 7$ or $8$.
\end{remark}


\begin{remark}
As Hirzebruch surfaces $F_m$ are smooth for all $m \ge 0$ and the four numbers 
$\L^2, \L K_S, K_S^2$, and $C_2(T_{F_m})$ are polynomial in $a, b$, and $m$ in this case, the polynomiality result of
Corollary~\ref{cor:hirzebruch} also follows (with a weaker threshold)
from Tzeng's proof~\cite[Theorem~1.1]{Tz10} of G\"ottsche's Conjecture. 
\end{remark}

\subsection{A non-smooth example}
\label{sec:non-smooth-example}

We now compute the node polynomials of
a family of singular toric surfaces  for $\delta = 1$ and $\delta = 2$. For positive integers $c, d_0, d_1$ and $d_2$,
let $\Delta(c; d_0, d_1, d_2)$ be the convex
hull of the points $(0,0)$, $ (0, d_1+d_2)$, $(d_0, d_1 + d_2)$, $(d_0 +
d_1c, d_2)$ and $(d_0 + d_1 c, 0)$; see
Figure~\ref{fig:non-smooth_polygon}. The corresponding toric surface
$S(c)$ is singular whenever $c \ge 2$. The Severi degree
$N^{(d_0, d_1, d_2),\delta}_{S(c)}$ counts $\delta$-nodal curves whose
Newton polygon is $\Delta(c; d_0, d_1, d_2)$.

\begin{figure}[h]
\begin{center}
\qquad \includegraphics[height=3cm]{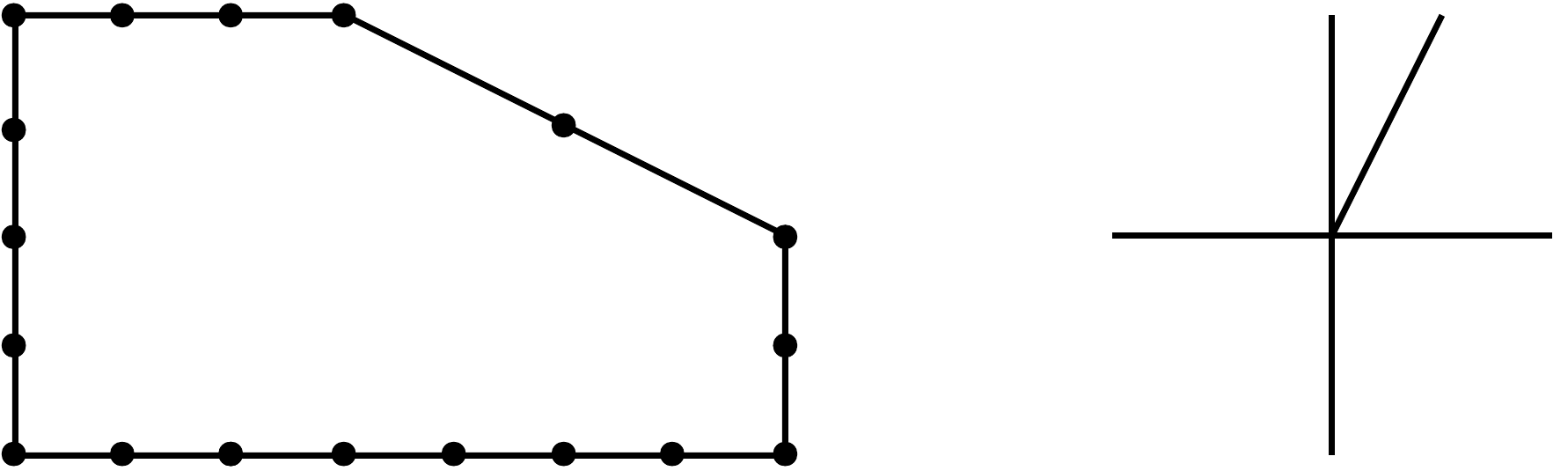}
\end{center}
\caption{The convex hull of  $(0,0), (0, d_1+d_2), (d_0, d_1 + d_2), \break (d_0 +
d_1c, d_2)$, and $(d_0 + d_1 c, 0))$, together with its normal fan. The cone dual to
the vertex $(d_0 + d_1c, d_2)$ corresponds to a singular point of
$S(c)$ if $c \ge 2$.}
\label{fig:non-smooth_polygon}
\end{figure}

\begin{corollary}[Polynomiality of Severi degrees for a non-smooth
  surface]
For fixed $\delta \ge 0$, there exists a universal polynomial
$p_\delta(d_0, d_1, d_2, c)$ such that
\[
N^{(d_0, d_1, d_2),\delta}_{S(c)} = p_\delta(d_0, d_1, d_2, c)
\]
for all positive integers $d_0, d_1, d_2, c$ with $d_0 \ge \delta$, $d_0
+ c \ge 2 \delta$, $d_1, d_2 \ge \delta$, $c \ge \delta + 1$. 
\end{corollary}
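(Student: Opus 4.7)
The plan is to apply Theorem \ref{mainthm:universal} directly to the family of polygons $\Delta(c; d_0, d_1, d_2)$. First I would put this polygon into the standard $h$-transverse form of Notation \ref{not:polygon}. Reading the edges off of Figure \ref{fig:non-smooth_polygon}: the top edge has length $d^{\,t} = d_0$; the right side consists of two edges with directions $(c,-1)$ and $(0,-1)$ and lattice lengths $d_1$ and $d_2$ respectively; the bottom edge has length $d^{\,b} = d_0 + d_1 c$; and the left side is a single edge with direction $(0,-1)$ and length $d_1 + d_2$. Therefore $n = 2$, $m = 1$, and
\[
\cc = ((c, 0);\, (0)), \qquad \dd = \bigl(d_0;\, (d_1, d_2);\, (d_1 + d_2)\bigr).
\]
Note that this is consistent with the balancing identity $d^{\,t} + \sum c^r_i d^r_i - d^{\,b} - \sum c^l_j d^l_j = 0$, and that $\Tor(\cc) = S(c)$ by construction.

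Next I would verify that the hypotheses of Theorem \ref{mainthm:universal} hold. The inequalities $d^{\,t}, d^{\,b} \geq \delta$ become $d_0 \geq \delta$ and $d_0 + d_1 c \geq \delta$, both trivial from the assumptions. The conditions $d^{\,t} + c^r_1 - c^l_1 \geq 2\delta$ and $d^{\,b} + c^r_n - c^l_m \geq 2\delta$ simplify to $d_0 + c \geq 2\delta$ and $d_0 + d_1 c \geq 2\delta$, which again follow from the stated bounds since $d_1 \geq 1$ and $c \geq \delta+1 \geq 2$. The slope-gap requirement $c^r_1 - c^r_2 \geq \delta + 1$ is exactly $c \geq \delta + 1$, while the analogous condition on the $c^l$s is vacuous since $m = 1$. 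Crucially, the ``spread out'' condition
\[
\bigl|(d^{\,r}_1 + \cdots + d^{\,r}_i) - (d^{\,l}_1 + \cdots + d^{\,l}_j)\bigr| \geq \delta + 2
\]
ranges over $1 \leq i \leq n-1$ and $1 \leq j \leq m - 1 = 0$, so it is vacuous as well. The only matter of bookkeeping is the lattice-length bound $d^{\,r}_i, d^{\,l}_j \geq \delta + 1$, which demands $d_1, d_2 \geq \delta + 1$; this is mildly stronger than the hypothesis $d_1, d_2 \geq \delta$ stated in the corollary, and I would either promote the hypothesis by one or observe (by inspecting the polytopality argument in Section \ref{sec:polynomialitygeneral}) that the borderline case goes through without modification.

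Applying Theorem \ref{mainthm:universal} produces a universal polynomial $q_\delta(\cc, \dd)$ in the seven variables $(c^r_1, c^r_2, c^l_1; d^{\,t}, d^{\,r}_1, d^{\,r}_2, d^{\,l}_1)$ agreeing with $N^{\dd,\delta}_{\Tor(\cc)}$ throughout the polyhedral region above. Substituting the specialization $\cc = (c, 0, 0)$, $\dd = (d_0, d_1, d_2, d_1 + d_2)$ dictated by our family yields
\[
p_\delta(d_0, d_1, d_2, c) := q_\delta\bigl((c, 0, 0),\, (d_0, d_1, d_2, d_1 + d_2)\bigr),
\]
which is a polynomial in the four free parameters and agrees with $N^{(d_0,d_1,d_2),\delta}_{S(c)}$ on the desired range. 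The main content of the argument is purely clerical: identifying the correct $(\cc, \dd)$ and translating each hypothesis. The only honest-to-goodness obstacle is the small mismatch between the corollary's threshold $d_i \geq \delta$ and Theorem \ref{mainthm:universal}'s threshold $d_i \geq \delta + 1$, which I expect to handle simply by strengthening the statement of the corollary by one.
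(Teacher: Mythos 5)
Your argument is essentially the same as the paper's, which simply observes that $c \ge \delta+1$ forces the empty swap sequence and then cites the universality theorem; the paper invokes the first-quadrant version (Theorem~\ref{thm:universal}), which applies here since $\cc^l=(0)$ and $\cc^r=(c,0)\ge\mathbf 0$, but that is only a notational simplification of the general Theorem~\ref{mainthm:universal} you used. You are also correct to flag the mismatch: both versions of the theorem demand $d^{\,r}_i\ge\delta+1$ while the corollary only posits $d_1,d_2\ge\delta$, a gap the paper's one-line proof does not address, so the safe reading is to strengthen the corollary's hypothesis to $d_1,d_2\ge\delta+1$ as you propose.
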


\begin{proof}
Since $c \ge \delta + 1$, there are no swaps and the  proof of
Theorem~\ref{thm:universal} applies.
\end{proof}

\begin{remark}
\label{rmk:computingsingularcase}
Using 
the algorithm of 
Remark~\ref{rmk:algorithm}, we find that the universal polynomials $p_\delta(d_0, d_1, d_2, c)$ for $\delta \le
2$ are:
\begin{displaymath}
\tiny
\begin{split}
p_0(d_0, d_1, d_2, c) = & 1,\\
p_1(d_0, d_1, d_2, c) = & 3 (d_1^2 c+2 d_1 d_2 c+2 d_0 d_1+2 d_0 d_2) -
2 (d_1 c + 2 d_0 + 2 d_1 + 2 d_2) + 4, \\
p_2(d_0, d_1, d_2, c) = & \tfrac{9}{2} d_1^4 c^2+18 d_1^3 d_2 c^2+18
d_1^2 d_2^2 c^2+18 d_0 d_1^3 c+54 d_0 d_1^2 d_2 c
 +36 d_0 d_1 d_2^2 c-6 d_1^3 c^2-12 d_1^2 d_2 c^2+18 d_0^2 d_1^2 \\
& + 36 d_0^2 d_1 d_2+18 d_0^2 d_2^2-24 d_0 d_1^2 c-36 d_0 d_1 d_2 c-12
d_1^3 c-36 d_1^2 d_2 c+2 d_1^2 c^2-24 d_1 d_2^2 c+2 d_1 d_2 c^2 \\
& -24 d_0^2 d_1-24 d_0^2 d_2-24 d_0 d_1^2-48 d_0 d_1 d_2+8 d_0 d_1
c-24 d_0 d_2^2+2 d_0 d_2 c-d_1^2 c-10 d_1 d_2 c-2 d_1 c^2 \\
& +8 d_0^2-2 d_0 d_1-2 d_0 d_2-2 d_0 c+8 d_1^2+16 d_1 d_2+\tfrac{23}{2} d_1 c+8 d_2^2+23 d_0+23 d_1+23 d_2+4 c-30.
\end{split}
\end{displaymath}
Equivalently, the polynomials $p_\delta(d_0, d_1, d_2, c)$, for
$\delta \le 2$,
are given by
\[
\sum_{\delta \ge 0} p_\delta(d_0, d_1, d_2, c) x^\delta = \exp \Big(
\sum_{\delta \ge 1} q_\delta(d_0, d_1, d_2, c) x^\delta \Big),
\]
where
\begin{displaymath}
\begin{split}
q_1(d_0, d_1, d_2, c) = & 3 (d_1^2 c+2 d_1 d_2 c+2 d_0 d_1+2 d_0 d_2) +
2 (- d_1 c - 2 d_0 - 2 d_1 - 2 d_2) + 4, \\
q_2(d_0, d_1, d_2, c) = & - 42( d_1^2 c
+ 2 d_1 d_2 c
+ 2 d_0 d_1
+ 2 d_0 d_2)
- 39 (-d_1 c
- 2 d_0
- 2 d_1
- 2 d_2) \\
& 
+ 4(d_1 c + d_0)(d_2 - 1)c
+ 8 c-76.
\end{split}
\end{displaymath}

Let $T_\delta(w, x,y,z)$ be G\"ottsche's universal polynomials for the smooth case  (c.f.\, Section~\ref{sec:relationwithGoettscheConjecture}), and define
polynomials $Q_\delta(w,x,y,z)$ via
$$\sum_{\delta \ge 1} T_\delta(w, x,y,z) t^\delta = \exp \left(
  \sum_{\delta \ge 1} Q_\delta(w,x,y,z) t^\delta \right).$$
According to the G\"ottsche-Yau-Zaslow
formula~(\ref{eqn:GYZ}), the polynomials
$Q_\delta(w, x,y,z)$ satisfy, for $\delta \le 2$,
\[
\begin{split}
Q_1(\L^2, \L K_S, K_S^2, c_2(S))  =& \, 3 \L^2 + 2 \L K_S +  c_2(S), \\
Q_2(\L^2, \L K_S, K_S^2, c_2(S))  = & - 42 \L^2 - 39 \L K_S - 6 K_S^2 -
7 c_2(S). \\
\end{split}
\]
If G\"ottsche's conjecture held in this non-smooth example, we would
have $Q_1=q_1$ and $Q_2=q_2$. For our example, we have \begin{displaymath}
\begin{split}
\L^2 & = d_1^2 c+2 d_1 d_2 c+2 d_0 d_1+2 d_0 d_2,\\
\L \cdot K_{S} & = -(d_1 c + 2 d_0 + 2 d_1 + 2 d_2),\\
c_2^{\text{MP}} &= 5, \\
K_{S}^2 &= 8 - c
\end{split}
\end{displaymath}
where the first two computations are in the singular cohomology of the
toric variety $S(c)$ (\cite[Theorem~12.4.1]{CLS11}), and $c_2^{MP}$ is MacPherson's Chern class as computed in~\cite{BBF92}. Then we get
\[
\begin{split}
Q_1=&  \, 3 (d_1^2 c+2 d_1 d_2 c+2 d_0 d_1+2 d_0 d_2) 
+  2 (- d_1 c - 2 d_0 - 2 d_1 - 2 d_2) + 5, \\
Q_2 =&   - 42( d_1^2 c
+ 2 d_1 d_2 c
+ 2 d_0 d_1
+ 2 d_0 d_2)
-   39 (-d_1 c
- 2 d_0
- 2 d_1
- 2 d_2) +6c-83.
\end{split}
\]

These expressions for $Q_1$ and $Q_2$ bear some similarity with the correct expressions for $q_1$ and $q_2$ above, but they do not coincide; so G\"ottsche's formula for the smooth case does not apply to this surface. However, this example seems to suggest that some modification of G\"ottsche's formula should still apply to a more general family of surfaces. We do not know what that modification would look like.

%
%
\end{remark}

\section{Further directions and open problems}

Our work suggests several directions of further research, some of which we have alluded to throughout the paper. We collect them here.

\begin{itemize}
\item 
As mentioned in Section~\ref{sec:intro} the relationship between our work and G\"ottsche's Conjecture needs to be further clarified. G\"ottsche's Conjecture is stated for smooth surfaces, while the surfaces we consider are generally not smooth. Is there a common generalization?
\item
We suspect that Severi degrees of \textbf{any} large toric surface are universally polynomial, even though we have only been able to prove it for large $h$-transverse toric surfaces. This restriction comes from Brugall\'e and Mikhalkin's observation that the encoding of tropical curves into floor diagrams only works in the $h$-transverse case. Can we adjust the definition of a floor diagram, or find a different combinatorial encoding that allows us to drop this restriction? This could involve making a different choice for our generic collection of points of Section \ref{sec:tropicalmethod}.
\item
It should be possible to weaken the conditions on $\cc$ and $\dd$ in Theorems \ref{mainthm:fixed} and Theorem \ref{mainthm:universal}. It is possible that we can drop the conditions on $\cc$ entirely; some conditions on $\dd$ are surely necessary.
\item
It would be of interest, and probably within reach, to clarify how the polynomials $p_{\delta}(\cc,\dd)$ vary when we drop horizontal edges from $\Delta$, or when we vary the lengths $m$ and $n$ of their input.
\end{itemize}

\bibliographystyle{amsplain}
\bibliography{References_Florian}

\def\polhk#1{\setbox0=\hbox{#1}{\ooalign{\hidewidth
  \lower1.5ex\hbox{`}\hidewidth\crcr\unhbox0}}} \def\cprime{$'$}
  \def\cprime{$'$}
\providecommand{\bysame}{\leavevmode\hbox to3em{\hrulefill}\thinspace}
\providecommand{\MR}{\relax\ifhmode\unskip\space\fi MR }
\providecommand{\MRhref}[2]{%
  \href{http://www.ams.org/mathscinet-getitem?mr=#1}{#2}
}
\providecommand{\href}[2]{#2}
\begin{thebibliography}{10}

\bibitem{BBF92}
G.~Barthel, J.-P. Brasselet, and K.-H. Fieseler, \emph{Classes de {C}hern des
  vari\'et\'es toriques singuli\`eres}, C. R. Acad. Sci. Paris S\'er. I Math.
  \textbf{315} (1992), no.~2, 187--192.

\bibitem{Be75}
D.~N. Bernstein, \emph{The number of roots of a system of equations},
  Funkcional. Anal. i Prilo\v zen. \textbf{9} (1975), no.~3, 1--4.

\bibitem{FB}
F.~Block, \emph{Computing node polynomials for plane curves}, Math. Res. Lett.
  \textbf{18} (2011), 621--643.

\bibitem{BM1}
E.~Brugall{\'e} and G.~Mikhalkin, \emph{Enumeration of curves via floor
  diagrams}, C. R. Math. Acad. Sci. Paris \textbf{345} (2007), no.~6, 329--334.

\bibitem{BM2}
\bysame, \emph{Floor decompositions of tropical curves: the planar case},
  Proceedings of {G}\"okova {G}eometry-{T}opology {C}onference 2008, G\"okova
  Geometry/Topology Conference (GGT), G\"okova, 2009, pp.~64--90.

\bibitem{CLS11}
D.~Cox, J.~Little, and H.~Schenck, \emph{Toric varieties}, Graduate Studies in
  Mathematics, vol. 124, American Mathematical Society.

\bibitem{DI}
P.~Di~Francesco and C.~Itzykson, \emph{Quantum intersection rings}, The moduli
  space of curves ({T}exel {I}sland, 1994), Progr. Math., vol. 129,
  Birkh\"auser Boston, Boston, MA, 1995, pp.~81--148.

\bibitem{En12}
F.~Enriques, \emph{Sui moduli d'una classe di superficie e sul teorema
  d'esistenza per funzioni algebriche di due variabilis}, Atti Accad. Sci.
  Torino \textbf{47} (1912).

\bibitem{FM}
S.~Fomin and G.~Mikhalkin, \emph{Labeled floor diagrams for plane curves}, J.
  Eur. Math. Soc. (JEMS) \textbf{12} (2010), no.~6, 1453--1496.

\bibitem{Fu84}
W.~Fulton, \emph{Introduction to intersection theory in algebraic geometry},
  CBMS Regional Conference Series in Mathematics, vol.~54, Published for the
  Conference Board of the Mathematical Sciences, Washington, DC, 1984.

\bibitem{Fu93}
\bysame, \emph{Introduction to toric varieties}, Annals of Mathematics Studies,
  vol. 131, Princeton University Press, Princeton, NJ, 1993, The William H.
  Roever Lectures in Geometry.

\bibitem{Go}
L.~G{\"o}ttsche, \emph{A conjectural generating function for numbers of curves
  on surfaces}, Comm. Math. Phys. \textbf{196} (1998), no.~3, 523--533.

\bibitem{Ha86}
J.~Harris, \emph{On the {S}everi problem}, Invent. Math. \textbf{84} (1986),
  no.~3, 445--461.

\bibitem{KP}
S.~Kleiman and R.~Piene, \emph{Node polynomials for families: methods and
  applications}, Math. Nachr. \textbf{271} (2004), 69--90.

\bibitem{Kn93}
D.~Knuth, \emph{Johann {F}aulhaber and sums of powers}, Math. Comp. \textbf{61}
  (1993), no.~203, 277--294.

\bibitem{Ma74}
R.~D. MacPherson, \emph{Chern classes for singular algebraic varieties}, Ann.
  of Math. (2) \textbf{100} (1974), 423--432.

\bibitem{P05}
A.~Postnikov, \emph{Permutohedra, associahedra, and beyond}, Int. Math. Res.
  Notices (2009), 1026--1106.

\bibitem{PRW}
A.~Postnikov, V.~Reiner, and L.~Williams, \emph{Faces of generalized
  permutohedra}, Documenta Math. \textbf{13} (2008), 207--273.

\bibitem{S82}
A.~Schrijver, \emph{Theory of linear and integer programming},
  Wiley-Interscience Series in Discrete Mathematics, John Wiley \& Sons Ltd.,
  Chichester, 1986, A Wiley-Interscience Publication.

\bibitem{Se21}
F.~Severi, \emph{Vorlesungen \"uber {A}lgebraische {G}eometrie}, Teubner,
  Leipzig, 1921.

\bibitem{Tz10}
Y.-J. Tzeng, \emph{A proof of {G\"ottsche-Yau-Zaslow} formula}, {Preprint},
  arXiv:1009.5371, 2010.

\end{thebibliography}

\end{document}